\theoremstyle{plain}
\newtheorem{claim}{Claim}
\newtheorem{lemma}{Lemma}
\newtheorem{proposition}{Proposition}
\newtheorem{remark}{Remark}
\newtheorem{theorem}{Theorem}
\numberwithin{equation}{section}
\begin{document}
\title[Existence and multiplicity of solutions...]{Existence and
multiplicity of solutions to elliptic equations of fourth order on compact
manifolds.}
\author{Mohammed Benalili}
\address{University Abou-Bekr Belka\"{\i}d, Faculty of Sciences Dept. Maths
B.P.119 Tlemcen Algeria.}
\email{m\_benalili@mail.univ-tlemcen.dz}
\subjclass[2000]{Primary 58J05}
\keywords{Elliptic equation of fourth order, Critical Sobolev exponent.}

\begin{abstract}
This paper deals with a fourth order elliptic equation on compact Riemannian
manifolds, the function $f$ involved in the nonlinearity is of changing sign
which makes the analysis more difficult than the case where $f$ \ is of
constant sign.We prove the multiplicity of solutions in the subcritical case
which is the subject of the first theorem. In the second one we establish
the existence of solutions to the equation with critical Sobolev growth.
\end{abstract}

\maketitle

\section{\protect\bigskip Introduction\qquad}

Let$\ (M,g)$ be a Riemannian compact smooth $n$- manifold $n\geq 5$ with the
metric $g$, we let $H_{2}^{2}(M)$ be the standard Sobolev space which is the
completion of the space 
\begin{equation*}
C_{2}^{2}(M)=\left\{ u\in C^{\infty }(M)\text{: }\left\Vert u\right\Vert
_{2,2}<+\infty \right\}
\end{equation*}%
with respect to the norm $\left\Vert u\right\Vert
_{2,2}=\sum_{l=0}^{2}\left\Vert \nabla ^{l}u\right\Vert _{2}$.

Let $H_{2}$ be the space $H_{2}^{2}$ endowed with the equivalent norm 
\begin{equation*}
\left\Vert u\right\Vert _{H_{2}}=\left( \left\Vert \Delta u\right\Vert
_{2}^{2}+\left\Vert \nabla u\right\Vert _{2}^{2}+\left\Vert u\right\Vert
_{2}^{2}\right) ^{\frac{1}{2}}\text{.}
\end{equation*}%
where, $\Delta (u)=-div(\nabla u)$, denotes the Riemannian Laplacian.

First we establish the existence of at least two solutions of the
subcritical equation

\begin{equation}
\Delta ^{2}u+\nabla ^{i}(a(x)\nabla _{i}u)+h(x)u=f(x)\left\vert u\right\vert
^{q-2}u  \label{1}
\end{equation}%
where $2<q<N.$ Next we investigate solutions of the critical equation 
\begin{equation}
\Delta ^{2}u+\nabla ^{i}(a(x)\nabla _{i}u)+h(x)u=f(x)\left\vert u\right\vert
^{N-2}u  \label{2}
\end{equation}%
where $a$, $h$ and $f$ \ are smooth functions on $\ M$ \ and $N=\frac{2n}{n-4%
}$ is the critical exponent.

The function $f$ involved in the nonlinearity is of changing sign which
makes the analysis more difficult than the case where $f$ \ is of constant
sign.

The equation (\ref{1}) has a geometric roots, in fact while the conformal
Laplacian%
\begin{equation*}
L_{g}(u)=\Delta u+\frac{n-2}{4(n-1)}Ru
\end{equation*}%
where $R$ is the scalar curvature of the metric $g$, is associated to the
scalar curvature; the Paneitz operator as discovered by Paneitz $\left( 
\text{\cite{10}}\right) $ on 4-dimension manifolds and extended by Branson $%
\left( \text{\cite{3}}\right) $ to higher dimensions ( $n\geq 5$ ) \ reads
as 
\begin{equation*}
PB_{g}(u)=\Delta ^{2}u+div(-\frac{(n-2)^{2}+4}{2(n-1)(n-2)}R.g+\frac{4}{n-2}%
Ric)du+\frac{n-4}{2}Q^{n}u
\end{equation*}%
where $Ric$ is the Ricci curvature of $g$ and where%
\begin{equation*}
Q^{n}=\frac{1}{2(n-1)}\Delta R+\frac{n^{3}-4n^{2}+16n-16}{8(n-1)^{2}(n-2)^{2}%
}R^{2}-\frac{2}{(n-2)^{2}}\left\vert Ric\right\vert ^{2}
\end{equation*}%
is associated to the notion of $Q$ -curvature, good references on the
subject are Chang (\cite{5}) and Chang-Yang (\cite{6}). When the manifold ($%
M,g$) is Einstein, the Paneitz-Branson operator has constant coefficients.
It expresses as 
\begin{equation*}
PB_{g}=\Delta ^{2}u+\alpha \Delta u+au
\end{equation*}%
with 
\begin{equation*}
\alpha =\frac{n^{2}-2n-4}{2n(n-1)}R\text{ \ \ and \ }a=\frac{(n-4)(n^{2}-4)}{%
16n(n-1)^{2}}R^{2}
\end{equation*}%
and this operator is a special case of what it is usually referred as a
Paneitz- Branson type operator with constant coefficients.

Since 1990 many results have been established for precise functions $a$, $h$
and $f.$ D.E. Edmunds, D. Fortunato, E. Jannelli $\left( \text{\cite{8}}%
\right) $ proved for $n\geq 8$ that if $\lambda \in (0,\lambda _{1})$, with $%
\lambda _{1}$ is the first eigenvalue of $\Delta ^{2}$ on the euclidean open
ball $B$, the problem 
\begin{equation*}
\left\{ 
\begin{array}{c}
\Delta ^{2}u-\lambda u=u\left\vert u\right\vert ^{\frac{8}{n-4}}\text{ in }B
\\ 
u=\frac{\partial u}{\partial n}=0\text{ on }\partial B%
\end{array}%
\right.
\end{equation*}%
has a non trivial solution.

In 1995, R$.$ Van der Vorst $\left( \text{\cite{12}}\right) $ obtained the
same results as D.E. Edmunds, D. Fortunato, E. Jannelli. when applied to the
problem%
\begin{equation*}
\left\{ 
\begin{array}{c}
\Delta ^{2}u-\lambda u=u\left\vert u\right\vert ^{\frac{8}{n-4}}\text{ in }%
\Omega \\ 
u=\Delta u=0\text{ \ on \ }\partial \Omega%
\end{array}%
\right.
\end{equation*}%
where $\Omega $ is an open bounded set of $R^{n}$ and moreover he showed
that the solution is positive

In (\cite{7}) D.Caraffa studied the equation $\left( \text{\ref{1}}\right) $
on compact manifolds in the case $f(x)=$constant; and in the particular case
where the functions $a(x)$ and $h(x)$ are precise constants she obtained the
existence of positive regular solutions.

In the case of second order equation related to the prescribed scalar
curvature, that is 
\begin{equation}
\Delta u+\frac{n-2}{4(n-1)}Ru=fu^{2^{\ast }-1}  \label{3}
\end{equation}%
where $2^{\ast }=\frac{2n}{n-2}$, A. Rauzy \cite{11} stated, in the case
where the scalar curvature $R$ of the manifold $(M,g)$ is a negative
constant and $f$ \ is a changing sign function, the following results.

Let $f$ \ be a $C^{\infty }$ function on $\ M$, $f^{-}=-\inf (f,0)$, $%
f^{+}=\sup (f,0)$ and 
\begin{equation*}
\lambda _{f}=\underset{u\in A}{\inf }\frac{\int_{M}\left\vert \nabla
u\right\vert ^{2}dv_{g}}{\int_{M}u^{2}dvg}
\end{equation*}%
where $A=\left\{ u\in H_{1}^{2}(M),u\geq 0,\ \ u\not\equiv 0\text{ s.t. }%
\int_{M}f^{-}udv_{g}=0\right\} $, and $\lambda _{f}=+\infty $ \ if $A=\phi $.

\begin{theorem}
\label{1'} Let $\left( M,g\right) $\ be a smooth manifold with constant
scalar curvature $R<0$\ and let $f$\ be a smooth changing sign function on $%
M $. Suppose that there exists a constant $C>0$\ which depends only on $%
\frac{f^{-}}{\int_{M}f^{-}dv_{g}}$\ such that if $f$\ fulfills the following
conditions 
\begin{equation*}
\left. 
\begin{array}{l}
(1)\text{ }\left\vert R\right\vert <\frac{4(n-1)}{n-2}\lambda _{f}\text{ \ \
\ \ \ \ \ \ \ \ \ \ \ \ \ \ \ \ \ \ \ \ \ \ \ \ \ \ \ \ \ \ \ \ \ \ \ \ \ \
\ \ \ \ \ \ \ \ \ \ \ \ \ \ \ \ \ \ \ \ \ \ \ \ \ \ \ \ \ \ \ \ \ \ \ \ \ \
\ \ \ \ \ \ \ \ \ \ \ \ \ \ \ \ \ \ \ \ } \\ 
(2)\text{ }\frac{\sup f^{+}}{\int f^{-}dv_{g}}<C.%
\end{array}%
\right.
\end{equation*}%
Then, the equation (\ref{3})\ admits a positive solution.
\end{theorem}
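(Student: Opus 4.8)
The plan is to obtain the solution by the direct method in the calculus of variations: minimize the energy quotient naturally attached to $(\ref{3})$, and then rule out the loss of compactness caused by the critical exponent $2^{\ast}=\frac{2n}{n-2}$ by a judicious choice of test function adapted to the sign change of $f$. Writing $c_{n}=\frac{n-2}{4(n-1)}$, I would work with the functional $Q(u)=\int_{M}(\left\vert\nabla u\right\vert^{2}+c_{n}Ru^{2})\,dv_{g}$ on the constraint set $\mathcal{C}=\{u\in H_{1}^{2}(M):u\geq 0,\ \int_{M}f\left\vert u\right\vert^{2^{\ast}}dv_{g}=1\}$, which is non-empty because $f$ is positive somewhere, and put $\mu=\inf_{\mathcal{C}}Q$. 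A non-negative minimizer $u$ satisfies, weakly, $\Delta u+c_{n}Ru=\mu f\left\vert u\right\vert^{2^{\ast}-2}u$, and once $\mu>0$ is known, replacing $u$ by a suitable positive multiple absorbs the Lagrange multiplier and produces a solution of $(\ref{3})$. Thus everything reduces to showing that $\mu$ is attained by a positive, smooth function.

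Since $R<0$, the quadratic form $Q$ is not coercive on $H_{1}^{2}(M)$ (it is negative on constants), so the first task is to exploit hypothesis $(1)$, i.e. $c_{n}\left\vert R\right\vert<\lambda_{f}$, in order to bound minimizing sequences in $H_{1}^{2}(M)$ and to obtain $\mu>0$. The idea is that a function $u\in\mathcal{C}$ carrying most of its $L^{2^{\ast}}$-mass on $\{f<0\}$ would be incompatible with the constraint $\int_{M}f\left\vert u\right\vert^{2^{\ast}}=1>0$; consequently, if $\left\Vert u_{k}\right\Vert_{H_{1}^{2}}\to\infty$ along a minimizing sequence, then after rescaling $w_{k}=u_{k}/\left\Vert u_{k}\right\Vert_{H_{1}^{2}}$ one extracts a non-trivial limit $w$ which (by the constraint, passed to the limit) vanishes on $\{f<0\}$, hence with $\int_{M}f^{-}w\,dv_{g}=0$, so that $Q(w)\geq(\lambda_{f}-c_{n}\left\vert R\right\vert)\left\Vert w\right\Vert_{2}^{2}>0$ by the definition of $\lambda_{f}$; this contradicts $Q(u_{k})=o(\left\Vert u_{k}\right\Vert_{H_{1}^{2}}^{2})$. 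A quantitative version of the same estimate gives $\mu>0$.

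The critical exponent enters through the sharp Sobolev inequality: denoting by $K(n,2)$ the best constant in $H_{1}^{2}(M)\hookrightarrow L^{2^{\ast}}(M)$, for each $\varepsilon>0$ there is $B_{\varepsilon}$ with $\left\Vert u\right\Vert_{2^{\ast}}^{2}\leq(K(n,2)^{2}+\varepsilon)\left\Vert\nabla u\right\Vert_{2}^{2}+B_{\varepsilon}\left\Vert u\right\Vert_{2}^{2}$. Coupled with $1=\int_{M}f\left\vert u\right\vert^{2^{\ast}}\leq(\sup f^{+})\left\Vert u\right\Vert_{2^{\ast}}^{2^{\ast}}$, this shows that if $\mu<\Lambda^{\ast}:=1/\bigl(K(n,2)^{2}(\sup f^{+})^{2/2^{\ast}}\bigr)$, then a bounded minimizing sequence $u_{k}\rightharpoonup u$ cannot concentrate: writing, via the Brezis--Lieb lemma, $Q(u_{k})=Q(u)+\left\Vert\nabla(u_{k}-u)\right\Vert_{2}^{2}+o(1)$ and $1=\int_{M}f\left\vert u\right\vert^{2^{\ast}}+\int_{M}f\left\vert u_{k}-u\right\vert^{2^{\ast}}+o(1)$, and bounding the last term by $(\sup f^{+})\left\Vert u_{k}-u\right\Vert_{2^{\ast}}^{2^{\ast}}$, the strict inequality $\mu<\Lambda^{\ast}$ excludes the vanishing and dichotomy alternatives, so $u_{k}\to u$ in $L^{2^{\ast}}$ and $u\in\mathcal{C}$ realizes the infimum. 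The crux — and the main obstacle — is to show that $(1)$ and $(2)$ do force $\mu<\Lambda^{\ast}$. For this one constructs an admissible test function $\varphi$ that is spread over a neighbourhood of the region where $f$ is positive while putting only a small, controlled amount of mass where $f<0$, so that $\int_{M}f\varphi^{2^{\ast}}dv_{g}$ stays positive and bounded away from $0$ while $\int_{M}f^{-}\varphi^{2^{\ast}}dv_{g}$ is a small multiple of $\int_{M}f^{-}dv_{g}$, and uses $(1)$ to keep $c_{n}R\int_{M}\varphi^{2}dv_{g}$ under control; the computation then delivers $Q(\varphi)<\Lambda^{\ast}$ as soon as $\sup f^{+}/\int_{M}f^{-}dv_{g}<C$, with $C$ depending only on the normalized density $f^{-}/\int_{M}f^{-}dv_{g}$ (through $\lambda_{f}$ and the shape of $\{f<0\}$). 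Pinning down this $C$ is where the real work lies.

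Finally one regularizes the minimizer and proves positivity. Since $u\geq 0$ solves $\Delta u+c_{n}Ru=\mu f\left\vert u\right\vert^{2^{\ast}-2}u$ weakly with $\mu>0$, the standard bootstrap for critical elliptic equations — Trudinger's truncation argument to reach $L^{p}$ for every $p$, followed by Schauder estimates — gives $u\in C^{\infty}(M)$, and the strong maximum principle applied to $\Delta u+(c_{n}R-\mu f\left\vert u\right\vert^{2^{\ast}-2})u=0$ then forces $u>0$ on all of $M$. Rescaling by the appropriate positive constant yields a positive solution of $(\ref{3})$. In summary, the delicate ingredients are the test-function estimate tied to condition $(2)$ — producing a competitor that strictly beats $\Lambda^{\ast}$, with the stated geometry- and profile-dependent constant $C$, which is exactly where the change of sign of $f$ is exploited — and, to a lesser extent, the handling of the indefinite form $Q$ through $\lambda_{f}$ in the second step.
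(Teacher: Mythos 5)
You should know at the outset that the paper contains no proof of Theorem \ref{1'}: it is quoted verbatim from Rauzy \cite{11} as motivation, and the paper's own work is the fourth-order analogue (Theorems \ref{th1} and \ref{th2}), proved by a method modeled on Rauzy's. That method does not minimize the quotient $Q(u)/(\int_{M}f|u|^{2^{\ast}}dv_{g})^{2/2^{\ast}}$ over $\{u:\int_{M}f|u|^{2^{\ast}}dv_{g}=1\}$. It studies the curve $k\mapsto\mu_{k}=\inf\{F(u):\|u\|_{q}^{q}=k\}$ for subcritical $q$, uses conditions (1)--(2) together with the auxiliary quantities $\lambda_{f,\eta,q}$ (compare Lemma \ref{lem6} here) to show this curve attains a negative minimum strictly before an interval where it is positive, and then lets $q\to 2^{\ast}$: the limit is a nontrivial solution because its energy is bounded above by a strictly negative constant. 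Note that a critical point obtained this way satisfies $\int_{M}fu^{2^{\ast}}dv_{g}<0$ (since for a critical point $F(u)=(\tfrac{2^{\ast}}{2}-1)\int_{M}f|u|^{2^{\ast}}dv_{g}$), so it lies outside your constraint set $\mathcal{C}$; the entire point of that route is to bypass the Aubin-type threshold comparison that your plan hinges on.

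Beyond the difference in strategy, there are two genuine gaps. First, your boundedness argument fails as stated: from $\int_{M}f|w_{k}|^{2^{\ast}}dv_{g}\to 0$ you cannot conclude that the limit $w$ satisfies $\int_{M}f^{-}w\,dv_{g}=0$, because $\int_{M}f^{+}|w_{k}|^{2^{\ast}}dv_{g}$ and $\int_{M}f^{-}|w_{k}|^{2^{\ast}}dv_{g}$ may cancel while each stays bounded away from zero; ruling out exactly this dichotomy is what the $\lambda_{f,\eta,q}$ machinery and hypothesis (2) are for in \cite{11}, and it is absent from your sketch. Second, the inequality $\mu<\Lambda^{\ast}$, which you concede is ``where the real work lies,'' is in your formulation the whole content of the theorem, and it is doubtful it can be obtained in the form you need: a bubble concentrated at a maximum of $f$ drives the quotient to the threshold with corrections governed by local geometry (the usual Yamabe obstructions), not by $\sup f^{+}/\int_{M}f^{-}dv_{g}$, while a spread-out competitor risks making $Q$ nonpositive, which would destroy the rescaling step that requires $\mu>0$ --- a positivity you also never establish. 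So the proposal is a reasonable first instinct but is not a proof, and it is not the route taken by Rauzy or by this paper.
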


\begin{theorem}
\label{2'} Let $\left( M,g\right) $\ be a smooth manifold with constant
scalar curvature $R<0$\ and let $f$\ be a smooth changing sign function on $%
M $. Suppose that there exists a constant $C>0$\ which depends only on $%
\frac{f^{-}}{\int_{M}f^{-}dv_{g}}$\ such that if $f$\ fulfills the following
conditions 
\begin{equation*}
\left. 
\begin{array}{l}
(1)\text{ }\left\vert R\right\vert <\frac{4(n-1)}{n-2}\lambda _{f}\text{ \ \
\ \ \ \ \ \ \ \ \ \ \ \ \ \ \ \ \ \ \ \ \ \ \ \ \ \ \ \ \ \ \ \ \ \ \ \ \ \
\ \ \ \ \ \ \ \ \ \ \ \ \ \ \ \ \ \ \ \ \ \ \ \ \ \ \ \ \ \ \ \ \ \ \ \ \ \
\ \ \ \ \ \ \ \ \ \ \ \ \ \ \ \ \ \ \ \ } \\ 
(2)\text{ }\frac{\sup f^{+}}{\int f^{-}dv_{g}}<C \\ 
(3)\text{ }\sup_{M}f>0.%
\end{array}%
\right.
\end{equation*}%
\textit{Then the subcritical equation }$\Delta _{g}u+Ru=fu^{q-1},$\textit{\ }%
$q\in \left] 2,2^{\ast }\right[ $\textit{\ admits two nontrivial distinct
solutions.}
\end{theorem}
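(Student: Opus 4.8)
\emph{Variational setting.} Weak solutions of $\Delta_{g}u+Ru=f|u|^{q-2}u$ are the critical points on $H_{1}^{2}(M)$ of
\[
J(u)=\tfrac12\int_{M}\bigl(|\nabla u|^{2}+Ru^{2}\bigr)\,dv_{g}-\tfrac1q\int_{M}f|u|^{q}\,dv_{g},
\]
and I write $Q(u)=\int_{M}(|\nabla u|^{2}+Ru^{2})\,dv_{g}$. Because $R<0$, the quadratic form $Q$ is \emph{indefinite} (it is negative on the constants), and this is the whole source of difficulty; on the other hand $2<q<2^{\ast}$ makes the embedding $H_{1}^{2}(M)\hookrightarrow L^{q}(M)$ \emph{compact}, so that the nonlinear term is weakly continuous and $J$ satisfies the Palais--Smale condition at every level once $(PS)$ sequences are known to be bounded. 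The loss-of-compactness phenomena that dominate equation $(\ref{2})$ are therefore absent here.

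\emph{First solution.} This is obtained exactly as in Theorem~\ref{1'}, only in the easier subcritical range. Minimise $Q$ over $\Sigma=\{u\in H_{1}^{2}(M):\int_{M}f|u|^{q}\,dv_{g}=1\}$, which is nonempty since $f$ is positive on a nonempty open set (hypothesis~(3)), and set $\mu=\inf_{\Sigma}Q$. The decisive claim is $\mu>0$: if $Q(u)\le0$ then, since $R<0$, the $L^{2}$-mass of $u$ must concentrate where the Rayleigh quotient of $\Delta_{g}$ is small, and hypotheses~(1)--(2) — through the eigenvalue $\lambda_{f}$, the admissible set $A$, and the bound $\sup f^{+}<C\int_{M}f^{-}\,dv_{g}$ — prevent such a $u$ from having $\int_{M}f|u|^{q}=1$; this estimate is what fixes the constant $C=C(f^{-}/\int_{M}f^{-}\,dv_{g})$. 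Given $\mu>0$, a minimising sequence is bounded, its weak limit converges strongly in $L^{q}$ (Rellich, subcritical), hence lies in $\Sigma$ and attains $\mu$; replacing it by its absolute value we get a nonnegative minimiser $u_{0}$. The Lagrange multiplier rule gives $\Delta_{g}u_{0}+Ru_{0}=\mu f u_{0}^{q-1}$, and the rescaling $u_{1}=\mu^{1/(q-2)}u_{0}$ is a first nontrivial solution, smooth by elliptic bootstrap, with $J(u_{1})=\tfrac{q-2}{2q}\mu^{q/(q-2)}>0$.

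\emph{Second solution.} A one-variable computation shows $u_{1}$ is a \emph{saddle}, not a local minimum: along its own ray, $t\mapsto J(tu_{1})=Q(u_{1})\bigl(\tfrac{t^{2}}{2}-\tfrac{t^{q}}{q}\bigr)$ has its maximum at $t=1$ and tends to $-\infty$ as $t\to+\infty$ (using $\int_{M}fu_{1}^{q}=Q(u_{1})>0$). I would then fix $R_{0}>1$ with $J(R_{0}u_{1})<0$ and apply the mountain-pass theorem to the class $\Gamma=\{\gamma\in C([0,1],H_{1}^{2}(M)):\gamma(0)=0,\ \gamma(1)=R_{0}u_{1}\}$, with $c=\inf_{\gamma\in\Gamma}\max_{[0,1]}J\circ\gamma$. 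The fibre analysis $t\mapsto J(tu)$ shows that on the ``Nehari'' set $\{u:Q(u)=\int_{M}f|u|^{q}>0\}$ one has $J(u)=\tfrac{q-2}{2q}Q(u)\ge\tfrac{q-2}{2q}\mu^{q/(q-2)}=J(u_{1})$, so $c\ge J(u_{1})$, and $(PS)_{c}$ produces a critical point $u_{2}$ at level $c$; when $c>J(u_{1})$ we get $u_{2}\ne u_{1}$. The cleanest way to close the argument unconditionally, however, is to minimise $J$ directly over the other Nehari component $\mathcal{N}^{-}=\{u\ne0:Q(u)=\int_{M}f|u|^{q}<0\}$: it is nonempty, $J$ is bounded below on it by hypotheses~(1)--(2) (again $\lambda_{f}$ is used to exclude the rescaled concentration limits), the constraint is nondegenerate there since $\tfrac{d}{dt}\langle J'(tu),tu\rangle\big|_{t=1}=(2-q)\int_{M}f|u|^{q}>0$, and a minimiser $u_{2}$ is then a genuine critical point with $J(u_{2})<0<J(u_{1})$, hence distinct from $u_{1}$.

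\emph{Main difficulty.} The two genuinely delicate points are (i) the strict inequality $\mu>0$ together with boundedness of the relevant minimising and $(PS)$ sequences — i.e. turning ``$f$ changes sign and $R<0$'' into the quantitative bound on $\sup f^{+}/\int_{M}f^{-}\,dv_{g}$ through $\lambda_{f}$ and the set $A$, which is the step that dictates the precise form of the hypotheses — and (ii) guaranteeing that the second critical point is distinct from $u_{1}$, that is, the strict separation of the two variational levels. Regularity, nonnegativity of $u_{1}$, and the $(PS)$ condition itself are routine here only because $q<2^{\ast}$; in the critical equation $(\ref{2})$ each of these must be replaced by a considerably more delicate argument.
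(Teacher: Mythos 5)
A point of orientation first: the paper does not actually prove this statement --- it is quoted from Rauzy \cite{11} as background --- but its own Theorem \ref{th1} is the fourth-order analogue and is proved by the scheme you would be measured against: fibre the functional by the $L^{q}$-norm, study $k\mapsto\mu_{k,q}=\inf_{\|u\|_{q}^{q}=k}F_{q}(u)$, show it is continuous, negative for small $k$ (test on constants, using the negativity of the zeroth-order coefficient), bounded below by $\tfrac12\mu k^{2/q}>0$ on an interval $[k_{1,q},k_{2,q}]$ (Lemma \ref{lem6}, where hypotheses (1)--(2) enter through the quantities $\lambda_{f,\eta,q}$ and the dichotomy between $\int_{M}f^{-}|u|^{q}dv_{g}\ge\eta k\int_{M}f^{-}dv_{g}$ and its opposite), and divergent to $-\infty$ (hypothesis (3)). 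The negative-energy solution is the interior minimum over the sublevel set $\{\|u\|_{q}^{q}\le k_{1,q}\}$, where the bound $\|u\|_{2}^{2}\le k^{2/q}$ makes the functional coercive so that attainment is easy, and the positive-energy solution is a mountain pass run between the two nonzero minimisers on the fibres where $\mu_{k,q}=0$. Your architecture (minimise $Q$ on $\Sigma=\{\int_{M}f|u|^{q}dv_{g}=1\}$, then minimise $J$ on $\mathcal{N}^{-}$) is genuinely different and not unreasonable, but as written it has gaps exactly at the points where the hypotheses must do their work.

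Concretely: (i) the ``decisive claim'' $\mu=\inf_{\Sigma}Q>0$ is asserted, not proved; it is not a footnote but the entire content of the theorem, since it is precisely here that condition (1) and the constant $C$ of condition (2) are manufactured, and the argument required (either $u$ puts mass of order $\eta$ on $\{f^{-}>0\}$, so the negative part of $\int_{M}f|u|^{q}dv_{g}$ dominates when $\sup f^{+}/\int_{M}f^{-}dv_{g}$ is small, or it does not, and then $u$ is close to the set $A$ and the Rayleigh quotient exceeds the threshold of (1)) is the whole of Lemma \ref{lem6}. (ii) Even granting $\mu>0$, ``a minimising sequence is bounded'' does not follow: $Q(u_{j})\le\mu+1$ controls only the difference $\|\nabla u_{j}\|_{2}^{2}-c|R|\|u_{j}\|_{2}^{2}$, and both terms can diverge with bounded difference (take near-constants corrected on $\{f>0\}$ to stay in $\Sigma$); one needs a uniform coercivity $Q(u)\ge\delta\|u\|_{H_{1}^{2}}^{2}$ on $\Sigma$, which is again the a priori estimate and not a consequence of $\mu>0$. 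The same objection applies to the lower bound for $J$ on $\mathcal{N}^{-}$ and to the boundedness of minimising sequences there. (iii) The mountain-pass variant anchored at $\gamma(0)=0$ fails: since $R<0$ one has $J(\epsilon)=\tfrac{\epsilon^{2}}{2}R\,\mathrm{Vol}(M)-\tfrac{\epsilon^{q}}{q}\int_{M}f\,dv_{g}<0$ for small constants $\epsilon>0$ (this is the paper's Lemma \ref{lem7}), so $0$ is not a local minimum and a path leaving the origin through near-constant functions need never cross the region $\{Q(u)=\int_{M}f|u|^{q}dv_{g}>0\}$ on which your fibre estimate gives $J\ge J(u_{1})$; the claimed bound $c\ge J(u_{1})$ is therefore unjustified, which is exactly why the paper runs its mountain pass between two nonzero constrained minimisers separated in $L^{q}$-norm rather than from the origin. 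Your fallback of minimising on $\mathcal{N}^{-}$ is the sounder route to the second solution, but it too rests on the unproved coercivity of point (i).
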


More recently \cite{2} the authors have extended the work of Rauzy to the
case of the so called generalized prescribed scalar curvature type equation

\begin{equation}
\Delta _{p}u+au^{p-1}=fu^{p^{\ast }-1}  \label{4}
\end{equation}%
where $p^{\ast }=\frac{np}{n-p}$, $\Delta _{p}u=-div(\left\vert \nabla
u\right\vert ^{p-2}\nabla u)$ is the $p$-Laplacian operator on a compact
manifold $M$ of dimension $n\geq 3$, with negative scalar curvature, $p\in
(1,n)$, $u\in H_{1}^{p}(M)$ is a positive function, $f$ \ is a changing sign
function and $a$ is a negative constant. Let%
\begin{equation*}
\lambda _{f}=\underset{u\in A}{\inf }\frac{\int_{M}\left\vert \nabla
u\right\vert ^{p}dv_{g}}{\int_{M}u^{p}dvg}
\end{equation*}%
where $A=\left\{ u\in H_{1}^{p}(M),u\geq 0\text{, }u\not\equiv 0\text{ s.t. }%
\int_{M}f^{-}udv_{g}=0\right\} $, and $\lambda _{f}=+\infty $ \ if $A=\phi $

\begin{theorem}
\label{3'}(Critical case) \ There is a constant $C>0$ which depends only on $%
f^{-}/(\int f^{-}dv_{g})$ such that if $f\in C^{\infty }$ on $M$ fulfills
the following conditions

(1) $\left\vert a\right\vert $ $<$ $\lambda _{f}$

(2) $\left( supf^{+}/\int f^{-}dv_{g}\right) <C$.

Then the equation $\left( \text{\ref{4}}\right) $ \ has a positive solution
of class $C^{1,\alpha }(M)$.
\end{theorem}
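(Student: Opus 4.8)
The plan is to run Rauzy's scheme: solve a family of subcritical problems, extract estimates uniform in the exponent, and let the exponent tend to $p^{*}$, hypotheses (1)--(2) serving to keep the limit nontrivial and to force compactness below the first Sobolev threshold. For $q\in(p,p^{*})$ I would set
\[
\mu_{q} \;=\; \inf\Big\{ \int_{M}\big(|\nabla u|^{p} + a\,|u|^{p}\big)\,dv_{g} \ :\ u \in H_{1}^{p}(M),\ u \ge 0,\ \int_{M} f\,|u|^{q}\,dv_{g} = 1 \Big\}.
\]
Since $f$ changes sign, $\sup_{M}f>0$ and the admissible set is nonempty; using hypothesis (1) together with the smallness in (2) one checks (the $p$-Laplacian analogue of Rauzy's coercivity lemma) that $\mu_{q}$ is finite, bounded below by a positive constant uniform in $q$, and that minimizing sequences stay bounded in $H_{1}^{p}(M)$. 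As $H_{1}^{p}(M)\hookrightarrow L^{q}(M)$ is compact for $q<p^{*}$, the infimum is attained by some $u_{q}\ge 0$, $u_{q}\not\equiv 0$, which solves weakly
\[
\Delta_{p}u_{q} + a\,u_{q}^{p-1} \;=\; \mu_{q}\,f\,u_{q}^{q-1} \qquad \text{on } M ,
\]
and by the regularity theory for the $p$-Laplacian (Tolksdorf, DiBenedetto) $u_{q}\in C^{1,\alpha}(M)$, while $a<0$ and the strong maximum principle of V\'azquez give $u_{q}>0$.

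\emph{Where the hypotheses enter.} Hypothesis (1), $|a|<\lambda_{f}$, gives the strict gap $\int_{M}(|\nabla v|^{p}+a|v|^{p})\ge(\lambda_{f}-|a|)\|v\|_{p}^{p}$ for $v$ carried by $\{f\ge 0\}$. A competitor in the constraint set need not be carried there, but the constraint forces $\int_{M}f^{+}|u|^{q}$ to dominate, and hypothesis (2) --- $\sup_{M}f^{+}<C\int_{M}f^{-}\,dv_{g}$ with $C$ small depending only on $f^{-}/\int_{M}f^{-}\,dv_{g}$ --- makes $\int_{M}f^{-}|u|\,dv_{g}$ negligibly small on the constraint set, i.e. competitors are almost carried by $\{f\ge0\}$; combining the two yields $\int_{M}(|\nabla u|^{p}+a|u|^{p})\ge\eta\|u\|_{H_{1}^{p}}^{p}$ there for some $\eta>0$, hence $\mu_{q}\ge c_{0}>0$ uniformly in $q$ (in particular (2) forces $\int_{M}f\,dv_{g}\le 0$, which is what prevents constants from sending the functional to $-\infty$). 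Independently, testing $\mu_{q}$ with a rescaled Aubin--Talenti extremal concentrated at a point $x_{0}$ with $f(x_{0})=\sup_{M}f>0$, and using that the zero-order term $a\int|u|^{p}$ adds a strictly negative lower-order correction, one obtains the decisive bound
\[
\limsup_{q\to p^{*}}\mu_{q} \;<\; \frac{1}{K(n,p)^{p}\,\big(\sup_{M}f^{+}\big)^{p/p^{*}}},
\]
$K(n,p)$ being the best Euclidean Sobolev constant.

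\emph{Passing to the limit.} Along a subsequence $u_{q}\rightharpoonup u\ge 0$ in $H_{1}^{p}(M)$, with $u_{q}\to u$ in $L^{p}(M)$ and a.e., and $\mu_{q}\to\mu_{p^{*}}\in[c_{0},\,K(n,p)^{-p}(\sup_{M}f^{+})^{-p/p^{*}})$. By Lions' concentration--compactness lemma one may write $|\nabla u_{q}|^{p}dv_{g}\rightharpoonup d\mu\ge|\nabla u|^{p}dv_{g}+\sum_{j}\mu_{j}\delta_{x_{j}}$ and $u_{q}^{p^{*}}dv_{g}\rightharpoonup u^{p^{*}}dv_{g}+\sum_{j}\nu_{j}\delta_{x_{j}}$ with $\mu_{j}\ge K(n,p)^{-p}\nu_{j}^{p/p^{*}}$. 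Testing the subcritical equations against $\varphi_{\varepsilon}^{p}u_{q}$, with $\varphi_{\varepsilon}$ concentrating at $x_{j}$, and letting $q\to p^{*}$ then $\varepsilon\to 0$, gives $\mu_{j}=\mu_{p^{*}}f(x_{j})\nu_{j}$: at points where $f(x_{j})\le 0$ this forces $\nu_{j}=0$, and at points where $f(x_{j})>0$ the inequality $\mu_{j}\ge K(n,p)^{-p}\nu_{j}^{p/p^{*}}$ combined with the strict upper bound on $\mu_{p^{*}}$ again forces $\nu_{j}=0$. Hence $u_{q}\to u$ strongly in $L^{p^{*}}(M)$; then the $S_{+}$ property of the $p$-Laplacian (its right-hand side converging in $W^{-1,p'}(M)$) promotes this to strong convergence in $H_{1}^{p}(M)$, so $\int_{M}f\,u^{p^{*}}dv_{g}=1$ (in particular $u\not\equiv 0$) and $u$ solves $\Delta_{p}u+a\,u^{p-1}=\mu_{p^{*}}f\,u^{p^{*}-1}$ weakly. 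Since $\mu_{p^{*}}>0$, the function $w=\mu_{p^{*}}^{1/(p^{*}-p)}u$ is a nonnegative weak solution of (\ref{4}); bootstrapping ($u\in L^{p^{*}}$ feeding an $L^{\infty}$-estimate, then DiBenedetto--Tolksdorf) together with V\'azquez's maximum principle upgrades it to a positive solution of class $C^{1,\alpha}(M)$.

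\emph{Main obstacle.} The analytic crux is the uniform lower bound $\mu_{q}\ge c_{0}>0$ drawn from (1)--(2): in the Hilbertian case $p=2$ it rests on $L^{2}$-orthogonal decompositions and the spectral theory of $\Delta+a$, whereas for $p\ne 2$ each such step has to be replaced by truncation arguments and the variational characterization of $\lambda_{f}$, the smallness constant $C$ of (2) being tracked explicitly so that the perturbation of $\lambda_{f}$ it induces stays below the gap $\lambda_{f}-|a|$. Once this lower bound and the strict threshold estimate are available, the concentration--compactness bookkeeping and the passage to the limit in the quasilinear operator are routine.
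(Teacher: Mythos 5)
Note first that Theorem \ref{3'} is only quoted in this paper from \cite{2}; it is not proved here. The relevant comparison is therefore with the method of \cite{2} (itself an adaptation of Rauzy \cite{11}), which is exactly the scheme this paper redeploys for its own fourth-order critical result (Theorem \ref{th2}, proved as Theorem \ref{th5}). That scheme is structurally different from yours: one never normalizes by $\int_M f|u|^q\,dv_g=1$ and never compares an energy level with the best Sobolev constant. Instead one minimizes the free functional $F_q(u)=\int_M(|\nabla u|^p+a|u|^p)\,dv_g-\int_M f|u|^q\,dv_g$ on the spheres $\|u\|_q^q=k$, proves via the coercivity lemma (the analogue of Lemma \ref{lem6}) that the curve $k\mapsto\mu_{k,q}$ starts at $0$, dips to a negative minimum attained at some $k_q$ strictly below a threshold $l_q$ that stays bounded as $q\to p^{*}$, and extracts subcritical solutions $u_q$ with uniformly bounded norms and uniformly negative energies. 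The weak limit solves the critical equation, and nontriviality is free: the energies are bounded above by a fixed negative constant (in the fourth-order case $\tfrac12 k^{2/N}\int_M h\,dv_g<0$), which forces $\int_M f|v|^{p^{*}}dv_g<0$ and hence $v\not\equiv0$. No concentration--compactness and no test-function expansion are needed.

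Your route produces instead a positive-level critical point and therefore must establish the strict threshold inequality $\limsup_{q\to p^{*}}\mu_q<K(n,p)^{-p}(\sup_M f^{+})^{-p/p^{*}}$, and this is a genuine gap. For the $p$-Laplacian the negative correction $a\int_M|U_\varepsilon|^p\,dv_g$ contributed by a normalized Aubin--Talenti bubble is of order $\varepsilon^{p}$ only when $n>p^{2}$ (with a logarithmic factor at $n=p^{2}$, and of order $\varepsilon^{(n-p)/(p-1)}$ when $n<p^{2}$), while the cut-off errors in $\int|\nabla U_\varepsilon|^p$ and $\int f|U_\varepsilon|^{p^{*}}$ are of order $\varepsilon^{(n-p)/(p-1)}$; so the ``strictly negative lower-order correction'' dominates only for $n>p^{2}$. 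Theorem \ref{3'} is stated for all $p\in(1,n)$, $n\geq3$, so in the critical dimensions $n\leq p^{2}$ your decisive bound is not established and the exclusion of concentration points with $f(x_j)>0$ collapses. Separately, the uniform bound $\mu_q\geq c_0>0$, which you rightly call the crux, is only asserted: the constraint $\int_M f|u|^q\,dv_g=1$ does not make $\int_M f^{-}|u|^q\,dv_g$ small (it only forces $\int_M f^{+}|u|^q\,dv_g\geq1$); the actual argument is a dichotomy on whether $\int_M f^{-}|u|^q\,dv_g$ exceeds $\eta k\int_M f^{-}dv_g$ with $k=\|u\|_q^q$, using $\lambda_{f,\eta,q}\to\lambda_f$ in the second branch. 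The negative-energy route of \cite{2} and of this paper avoids both difficulties, which is why it is the one adopted.
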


\begin{theorem}
\label{4'} (Subcritical case ) \ For every $C^{\infty }$- function on $M$
there is a constant $C>0$ which depends only on $f^{-}/(\int f^{-}dv_{g})$
such that if $f$ fulfills the following conditions

(1) $\left\vert a\right\vert $ $<$ $\lambda _{f}$

(2) $\left( supf^{+}/\int f^{-}dv_{g}\right) <C$

(3) $\sup f>0.$

Then the subcritical equation 
\begin{equation*}
\Delta _{p}u+au^{p-1}=fu^{q-1}\text{ \ }q\in \left] p,p^{\ast }\right[
\end{equation*}
\ has at least two non trivial positive solutions of class $C^{1,\alpha }(M) 
$.
\end{theorem}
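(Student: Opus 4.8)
The plan is to argue variationally, adapting Rauzy's strategy (Theorems \ref{1'}--\ref{2'}) to the quasilinear operator. Work on $H_1^p(M)$ with the $C^1$ energy
\[
I(u)=\frac{1}{p}\int_M|\nabla u|^p\,dv_g+\frac{a}{p}\int_M|u|^p\,dv_g-\frac{1}{q}\int_M f|u|^q\,dv_g,\qquad u\in H_1^p(M).
\]
Critical points of $I$ are weak solutions of $\Delta_p u+au^{p-1}=fu^{q-1}$; since $I(|u|)\le I(u)$, any nontrivial critical point may be taken $\ge 0$, and the $C^{1,\alpha}$ regularity theory for the $p$-Laplacian (DiBenedetto, Tolksdorf) together with the strong maximum principle (V\'azquez) promotes it to a positive $C^{1,\alpha}(M)$ solution. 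Hence it suffices to produce two distinct nontrivial critical points of $I$.

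The cornerstone is a coercivity estimate for the $p$-homogeneous form $Q(u)=\int_M(|\nabla u|^p+a|u|^p)\,dv_g$. Using condition (1), $|a|<\lambda_f$, and condition (2) with $C$ chosen depending only on $f^-/\int_M f^-\,dv_g$, one shows that there is $\gamma>0$ with $Q(u)\ge\gamma\|u\|_{H_1^p}^p$ for every $u\ge0$ in the relevant cone (e.g.\ those with $\int_M f|u|^q\,dv_g\ge0$, or those in a fixed sublevel set). The argument is by contradiction: a normalized sequence $u_k\ge0$, $\|u_k\|_{H_1^p}=1$, $Q(u_k)\to\ell\le0$, has (after extraction) a weak limit $u$; the compact embeddings $H_1^p\hookrightarrow L^p$ and $H_1^p\hookrightarrow L^q$ (here $q<p^*$) give strong convergence of the lower-order terms, the $(S_+)$-property of $-\Delta_p$ upgrades this to strong $H_1^p$ convergence, and passing to the limit forces $u$ to vanish on $\{f<0\}$ while satisfying $\int_M|\nabla u|^p\le|a|\int_M|u|^p$ --- contradicting the definition of $\lambda_f$ unless $u\equiv0$, which contradicts $\|u\|_{H_1^p}=1$. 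It is precisely condition (2) that quantitatively forbids a minimizing or Palais--Smale sequence from transferring its mass onto $\{f<0\}$.

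With coercivity available, the two solutions are obtained as follows. \emph{First solution.} Condition (3), $\sup_M f>0$, makes the set $\mathcal{C}=\{u\ge0:\int_M f|u|^q\,dv_g=1\}$ nonempty; minimizing $Q$ over $\mathcal{C}$, coercivity keeps minimizing sequences bounded in $H_1^p$, the subcritical exponent makes $\mathcal{C}$ weakly closed via the compact $L^q$ embedding, and the $(S_+)$-property yields a minimizer $u_1\ge0$, $u_1\not\equiv0$, with $Q(u_1)=\mu>0$; the associated Lagrange multiplier is then positive, and a rescaling $u_1\mapsto s\,u_1$ produces the first positive solution. \emph{Second solution.} Since $a<0$, $I$ takes negative values along suitable small test directions, and along any ray $tu$ with $\int_M f|u|^q\,dv_g>0$ one has $I(tu)\to-\infty$ as $t\to+\infty$ (because $q>p$); this gives $I$ a mountain-pass geometry separating a low-energy region from $-\infty$. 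As $q<p^*$, every Palais--Smale sequence of $I$ is bounded (coercivity together with the standard estimate on $pI(u)-\langle I'(u),u\rangle$) and precompact (compact embedding plus $(S_+)$), so the Palais--Smale condition holds at every level; the mountain-pass theorem then produces a second critical point $u_2$, and a comparison of critical levels ($I(u_2)$ lying strictly above $I(u_1)$, or the two being distinguished by the sign of the energy) shows $u_2\ne u_1$. Regularity and the maximum principle again give positivity.

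The main obstacle is the coercivity lemma and the extraction of the explicit constant $C=C(f^-/\int_M f^-\,dv_g)$ in condition (2). Because $f$ changes sign and $a$ is a negative constant, $Q$ is genuinely indefinite ($Q<0$ on constants), so no spectral gap is available; one must instead make quantitative, in terms of $\lambda_f$, the heuristic that a sequence cannot lower $Q$ by concentrating where $f<0$ as long as $\sup f^+$ is small compared with $\int_M f^-\,dv_g$. A further difficulty, absent in the Hilbertian second-order case, is that $H_1^p$ is not a Hilbert space and $Q$ is not bilinear, so the compactness of minimizing and Palais--Smale sequences must go through the $(S_+)$-property and a Brezis--Lieb-type decomposition for $-\Delta_p$, and the concluding regularity and positivity rest on quasilinear tools rather than linear elliptic theory.
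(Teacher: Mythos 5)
A preliminary remark: Theorem \ref{4'} is quoted from \cite{2} and is not proved in this paper, so the only available benchmark is the scheme the paper carries out for its fourth-order analogue (Theorem \ref{th1}, via Propositions \ref{prop3} and \ref{prop4}), which the introduction says follows the same Rauzy-type strategy as \cite{2} and \cite{11}. Measured against that scheme, your outline has the right global architecture, but the step that actually consumes hypotheses (1) and (2) --- and hence produces the constant $C$ --- is missing, and the form in which you state it is not the right one. You posit coercivity of $Q$ on the cone $\{u\ge 0,\ \int_M f|u|^q dv_g\ge 0\}$ and sketch a compactness proof whose pivotal step is that ``passing to the limit forces $u$ to vanish on $\{f<0\}$''. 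That implication is false: $\int_M f|u|^q dv_g\ge 0$ only gives $\int_M f^-|u|^q dv_g\le \int_M f^+|u|^q dv_g$, which does not localize $u$ away from $\{f<0\}$ (take $u$ equal to a constant there plus a large bump in $\{f>0\}$), so the weak limit need not lie in the set $A$ defining $\lambda_f$ and no contradiction is reached. What the actual proof does is quantitative and two-cased: it introduces the perturbed quantities $\lambda_{f,\eta,q}$ (infima of the Rayleigh quotient over $\{\|u\|_q=1,\ \int_M f^-|u|^q dv_g=\eta\int_M f^- dv_g\}$), proves they converge to $\lambda_f$ as $\eta\to 0$ uniformly in $q$ near the critical exponent (the analogues of Lemmas \ref{lem2}--\ref{lem5}), and then establishes positivity of the constrained infimum $\mu_{k,q}=\inf_{\|u\|_q^q=k}I$ only on an explicit annulus $k\in[k_{1,q},k_{2,q}]$ of $L^q$-norms (the analogue of Lemma \ref{lem6}): when $\int f^-|u|^q\ge\eta k\int f^-$ the $f^-$ term itself dominates once $k\ge k_{1,q}$, and otherwise $\lambda_{f,\eta,q}>|a|$ does the job; hypothesis (2), with $C$ built from $\eta$ and these constants, is exactly what prevents the $f^+$ term from destroying the bound and is where $k_{2,q}$ comes from. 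No coercivity on your cone is needed or proved, and this annulus is precisely the ``mountain ridge''; without it the mountain-pass geometry is not verified, since exhibiting points where $I<0$ on two sides of nothing is not a geometry. You have correctly identified this as ``the main obstacle'', but identifying an obstacle is not the same as removing it: it is the core analytic content of the theorem.

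There is a second, independent gap in the multiplicity count. The paper's first solution is the interior minimizer of the functional over the $L^q$-ball $\{\|u\|_q^q\le k_{1,q}\}$, hence a free critical point with strictly \emph{negative} energy, while the mountain-pass solution has strictly positive energy, so the two are distinct for free (this is the assertion $F_q(u)<0<F_q(v)$ in Theorem \ref{th1}). Your first solution is instead the rescaled minimizer of the homogeneous quotient on the Nehari-type set $\{\int_M f|u|^q dv_g=1\}$; a short computation with the Lagrange multiplier shows its energy is $I(su_1)=s^{q}\left(\frac{1}{p}-\frac{1}{q}\right)>0$, i.e.\ it also has \emph{positive} energy, and in this kind of problem the Nehari ground state typically coincides with the mountain-pass critical point. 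So even granting all your lemmas, the two critical points you construct are not shown to be distinct, and the device you invoke (``distinguished by the sign of the energy'') is unavailable. The negative-energy local minimizer sitting in the well created by $a<0$ --- which is what makes the count work in \cite{2}, \cite{11} and in Proposition \ref{prop3} here --- is absent from your construction. The sound parts of the outline are the regularity and positivity endgame (truncation by $|u|$, $C^{1,\alpha}$ theory, V\'azquez's strong maximum principle) and the boundedness and precompactness of Palais--Smale sequences in the subcritical range.
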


For $a$,$\ f$ , $C^{\infty }$ -functions $M$, we let $\ \ $%
\begin{equation*}
\lambda _{a,f}=\underset{u\in A}{\inf }\frac{\int_{M}(\Delta
u)^{2}dv_{g}-\int_{M}a\left\vert \nabla u\right\vert ^{2}dv_{g}}{%
\int_{M}u^{2}dvg}
\end{equation*}%
where $A=\left\{ u\in H_{2}\text{, }u\geq 0\text{, }u\not\equiv 0\text{ s.
t. }\int_{M}f^{-}udv_{g}=0\right\} ,~\ $and%
\begin{equation*}
\lambda _{a,f}=+\infty \ \ \ \ \text{if\ \ \ \ \ \ \ \ \ }A=\phi \text{.}
\end{equation*}%
Let $h$ be a smooth negative function on $M$, we consider the functional $%
F_{q}$ defined on $H_{2}$ by 
\begin{equation*}
F_{q}(u)=\left\Vert \Delta u\right\Vert _{2}^{2}-\int_{M}a\left\vert \nabla
u\right\vert ^{2}dv_{g}+\int_{M}hu^{2}dv_{g}-\int_{M}f\left\vert
u\right\vert ^{q}dv_{g},\text{ \ \ \ \ }q\in \left( 2,N\right] \text{.}
\end{equation*}%
In the case of fourth order elliptic equations on manifolds with changing
sign right hand side, no work is done at least I know off. While we borrow
ideas from the paper of Rauzy (\cite{12}), our method is not an adaptation
of that of Rauzy, since the behavior of fourth order operators differs from
that of second order ones. It is essentially due to the structures of the
spaces $H_{1}^{2}\left( M\right) $ and $H_{2}^{2}(M)$: indeed if $u\in
H_{1}^{2}\left( M\right) $ so does $\left\vert u\right\vert $ and the
gradient of $\left\vert u\right\vert $ satisfies $\left\vert \nabla
\left\vert u\right\vert \right\vert =\left\vert \nabla u\right\vert $ and
also the analysis on $H_{2}^{2}\left( M\right) $ is more complicated than on 
$H_{1}^{2}\left( M\right) $. In this paper we state the following results

\begin{theorem}
\label{th1} Let $a$, $h$ be $C^{\infty }$ functions on $M$ with $h$
negative. For every $C^{\infty }$ function, $f$ on $M$ with $%
\int_{M}f^{-}dv_{g}>0$, there exists a constant $C>0$ which depends only on $%
\frac{f^{-}}{\int f^{-}dv_{g}}$ such that if $f$ satisfies the following
conditions 
\begin{equation*}
\left. 
\begin{array}{l}
(1)\text{ }\left\vert h(x)\right\vert <\lambda _{a,f}\text{ \ \ \ \ \ \ \ \
\ \ \ \ \ \ \ \ \ for any }x\in M\text{\ \ \ \ \ \ \ \ \ \ \ \ \ \ \ \ \ \ \
\ \ \ \ \ \ \ \ \ \ \ \ \ \ \ \ \ \ \ \ \ \ \ \ \ \ \ \ \ \ \ \ \ \ \ \ \ \
\ \ \ \ \ \ \ \ \ \ \ \ \ \ \ \ } \\ 
(2)\text{ }\frac{\sup f^{+}}{\int f^{-}dv_{g}}<C \\ 
(3)\text{ }\sup_{M}f>0,%
\end{array}%
\right.
\end{equation*}%
then the subcritical equation 
\begin{equation*}
\Delta ^{2}u+\nabla ^{i}(a\nabla _{i}u)+hu=f\left\vert u\right\vert ^{q-2}u,%
\text{ \ \ \ \ }q\in \left] 2,N\right[
\end{equation*}%
has at least two distinct solutions $u$ and $v$ satisfying $F_{q}\left(
u\right) <0<F_{q}\left( v\right) $ and of class $C^{4,\alpha }$, for some $%
\alpha \in (0,1)$.
\end{theorem}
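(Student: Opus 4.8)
The plan is to realise the two solutions as critical points of $F_q$ and to distinguish them by the sign of the critical value. Write $Q(u)=\|\Delta u\|_2^2-\int_M a|\nabla u|^2\,dv_g+\int_M hu^2\,dv_g$, so that $F_q=Q-\int_M f|u|^q\,dv_g$ and a critical point of $F_q$ is a weak (hence, by regularity, classical) solution of the equation. Since the nonlinearity is homogeneous of degree $q-1$, testing $F_q'(u)=0$ against $u$ gives $2Q(u)=q\int_M f|u|^q\,dv_g$, whence $F_q(u)=\frac{q-2}{q}Q(u)=\frac{q-2}{2}\int_M f|u|^q\,dv_g$; in particular the critical value has the sign of $\int_M f|u|^q\,dv_g$, so it suffices to produce one critical point on which $\int_M f|u|^q\,dv_g>0$ and one on which it is $<0$. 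Because $\sup_M f>0$ and $\int_M f^-\,dv_g>0$, both pieces $\mathcal C^{\pm}=\{u\in H_2:\int_M f|u|^q\,dv_g=\pm1\}$ of the natural constraint are nonempty, and I would obtain the two critical points by minimising $Q$ over $\mathcal C^{+}$ and over $\mathcal C^{-}$.

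First I would prove the key a priori estimate: under (1)--(2), $\mu^{+}:=\inf_{\mathcal C^{+}}Q$ is finite and strictly positive and every minimising sequence is bounded in $H_2$. This is the crux. The difficulty is that $f$ changes sign, so $Q$ need not be coercive on $H_2$ and $u\mapsto\int_M f|u|^q\,dv_g$ is indefinite; moreover, unlike in the second order case, one cannot pass to $|u|$ because $|u|\notin H_2^2(M)$ in general, so the argument must be carried out directly in $H_2$. The mechanism is to split a function with respect to $\mathrm{supp}(f^-)$: on the part supported where $f^-=0$ the form $\|\Delta\cdot\|_2^2-\int a|\nabla\cdot|^2$ dominates $\lambda_{a,f}\,\|\cdot\|_2^2$, so condition (1), $|h|<\lambda_{a,f}$, makes that portion of $Q$ positive definite; the portion concentrated near $\mathrm{supp}(f^-)$ is controlled through the constraint $\int_M f|u|^q=1$ together with the smallness condition (2), the threshold $C$ being the constant of a Sobolev-type inequality depending only on $f^-/\int_M f^-\,dv_g$. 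Granting this, attainment is routine: for a minimising sequence pass to a weak limit $\bar u$ in $H_2$; because $q<N$ the embedding $H_2=H_2^2(M)\hookrightarrow L^q(M)$ is compact, so the constraint is preserved, while weak lower semicontinuity of $\|\Delta\cdot\|_2^2$ and strong convergence in $H_1^2$ and $L^2$ (compactness of $H_2^2\hookrightarrow H_1^2$) give $Q(\bar u)\le\mu^{+}$; thus $\bar u\in\mathcal C^{+}$ realises $\mu^{+}$. A Lagrange multiplier gives $\Delta^2\bar u+\nabla^i(a\nabla_i\bar u)+h\bar u=\mu^{+}f|\bar u|^{q-2}\bar u$ with $\mu^{+}>0$, so rescaling $\bar u$ by a positive constant that normalises the multiplier produces a critical point $v$ of $F_q$ with $\int_M f|v|^q\,dv_g>0$, i.e.\ $F_q(v)>0$.

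For the second solution I would minimise $Q$ on $\mathcal C^{-}$. The same estimate shows $\mu^{-}:=\inf_{\mathcal C^{-}}Q$ is finite; since $h$ is negative, $Q$ is negative on low-frequency functions, and such functions can be placed on $\mathcal C^{-}$ (this is where $\int_M f^-\,dv_g>0$, and if needed (2), which keeps $\int_M f^+\,dv_g$ small, enters), so $\mu^{-}<0$. Attainment is as above by the subcritical compactness; the Lagrange multiplier now has the favourable sign, so after rescaling one gets a critical point $u$ of $F_q$ with $\int_M f|u|^q\,dv_g<0$, i.e.\ $F_q(u)<0$. Alternatively this second critical point may be obtained by a minimax argument, $F_q$ being unbounded below along rays on which $\int_M f|u|^q\,dv_g>0$ and the Palais--Smale condition holding because $q<N$, with the geometry furnished by (1)--(3). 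In either case $u\ne v$ since $F_q(u)<0<F_q(v)$.

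Finally, regularity. A solution lies in $H_2^2(M)\subset L^N(M)$, so $f|u|^{q-2}u\in L^{N/(q-1)}(M)$; the operator $u\mapsto\Delta^2u+\nabla^i(a\nabla_iu)+hu$ is uniformly elliptic of order four with smooth coefficients, and its $L^p$ estimates let one bootstrap the integrability of $u$ up to every Lebesgue exponent, hence by Sobolev embedding up to $C^{3,\beta}(M)$; a last application of Schauder estimates, $f|u|^{q-2}u$ being now H\"older continuous, gives $u\in C^{4,\alpha}(M)$ for some $\alpha\in(0,1)$. \textbf{The main obstacle} is the a priori estimate above: showing that for sign-changing $f$ the conjunction of $|h|<\lambda_{a,f}$ and the smallness of $\sup f^+/\int_M f^-\,dv_g$ suffices to bound the minimising sequences and to pin down the signs $\mu^{+}>0>\mu^{-}$ --- and this is also where the genuinely fourth order feature, the failure of the truncation $u\mapsto|u|$ to stay in $H_2^2(M)$, must be handled, by performing the localisation near $\mathrm{supp}(f^-)$ inside $H_2$ itself.
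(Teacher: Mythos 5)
Your architecture is genuinely different from the paper's. You fiber $H_{2}$ by the sign-constrained sets $\mathcal{C}^{\pm}=\{\int_{M}f|u|^{q}dv_{g}=\pm 1\}$ and minimize the quadratic part $Q$ on each piece, using the homogeneity identity $F_{q}(u)=\frac{q-2}{2}\int_{M}f|u|^{q}dv_{g}$ at critical points to read off the sign of the energy. The paper instead fibers by the $L^{q}$-spheres $B_{k,q}=\{\Vert u\Vert_{q}^{q}=k\}$, proves that the curve $k\mapsto\mu_{k,q}=\inf_{B_{k,q}}F_{q}$ is continuous, starts at $0$, dips negative, becomes positive on an interval $[k_{1,q},k_{2,q}]$ and tends to $-\infty$, and then extracts the negative-energy solution as an interior minimum over a ball and the positive-energy one by a mountain-pass argument (Lemmas \ref{lem6}--\ref{lem9}, Propositions \ref{prop3}--\ref{prop4}). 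The advantage of the sphere constraint is that $\Vert u\Vert_{q}^{q}=k$ immediately bounds $\Vert u\Vert_{2}$, so coercivity of $F_{q}$ on each fiber and attainment of $\mu_{k,q}$ are cheap (Proposition \ref{prop1}); all the difficulty is pushed into showing positivity of $\mu_{k,q}$ on an interval. Your constraint has the opposite feature: $\int_{M}f|u|^{q}dv_{g}=\pm 1$ does \emph{not} bound $\Vert u\Vert_{q}$, so even the finiteness of $\mu^{-}$ and the boundedness of minimizing sequences are nontrivial and cannot be dismissed with ``the same estimate shows''.

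The genuine gap is precisely the step you flag as the crux but do not carry out: the quantitative positivity $\mu^{+}>0$ (needed to normalize the Lagrange multiplier) and the coercivity of $Q$ on $\mathcal{C}^{\pm}$. Your proposed mechanism --- splitting $u$ with respect to $\mathrm{supp}(f^{-})$ --- is not an argument yet, for two reasons. First, in $H_{2}^{2}$ a localization by cutoffs does not commute with $\Delta$, so $\Vert\Delta u\Vert_{2}^{2}$ does not decompose along such a splitting without commutator terms of the same order as the quantities you are trying to control. Second, and more seriously, $\lambda_{a,f}$ is defined through the hard constraint $\int_{M}f^{-}|u|\,dv_{g}=0$, whereas the functions you must handle only satisfy a soft constraint of the form $\int_{M}f^{-}|u|^{q}dv_{g}\leq\eta\Vert u\Vert_{q}^{q}\int_{M}f^{-}dv_{g}$ (this is what $\int_{M}f|u|^{q}dv_{g}>0$ together with the smallness of $\sup f^{+}$ actually gives). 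Bridging these requires the intermediate eigenvalue-type quantities $\lambda_{a,f,\eta,q}$ and the proof that $\lambda_{a,f,\eta,q}\rightarrow\lambda_{a,f}$ as $\eta\rightarrow 0$, uniformly for $q$ near $N$ --- that is the content of the paper's Lemmas \ref{lem2}--\ref{lem5}, and it is where conditions $(1)$ and $(2)$ and the precise form of the constant $C$ enter. Likewise, on $\mathcal{C}^{-}$ you need a dichotomy: either $\int_{M}f^{-}|u|^{q}dv_{g}<\eta\Vert u\Vert_{q}^{q}\int_{M}f^{-}dv_{g}$, in which case the spectral gap forces $Q(u)>0$, or the reverse inequality holds, in which case the constraint $\int_{M}f|u|^{q}dv_{g}=-1$ combined with $(2)$ bounds $\Vert u\Vert_{q}$ and hence $\Vert u\Vert_{2}$; without this case analysis neither $\mu^{-}>-\infty$ nor the boundedness of minimizing sequences follows. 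Your outer framework (the Nehari-type identity, attainment by compactness of $H_{2}\hookrightarrow L^{q}$ for $q<N$, distinctness via the sign of the energy, and the bootstrap regularity) is sound, but as written the proposal defers essentially all of the paper's analytic content to an unproved a priori estimate.
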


\begin{theorem}
\label{th2} Let $a$, $h$ be $C^{\infty }$ functions on $M$ with $h$
negative.\ For every $C^{\infty }$ function $f$ on $M$ with $%
\int_{M}f^{-}dv_{g}>0$ there exists a constant $C>0$ which depends only on $%
\frac{f^{-}}{\int f^{-}dv_{g}}$ such that if $f$ satisfies the following
conditions 
\begin{equation*}
\left. 
\begin{array}{l}
(1)\text{ }\left\vert h(x)\right\vert <\lambda _{a,f}\text{ \ \ \ \ \ \ \
for any }x\in M\text{\ \ \ \ \ \ \ \ \ \ \ \ \ \ \ \ \ \ \ \ \ \ \ \ \ \ \ \
\ \ \ \ \ \ \ \ \ \ \ \ \ \ \ \ \ \ \ \ \ \ \ \ \ \ \ \ \ \ \ \ \ \ \ \ \ \
\ \ \ \ \ \ \ \ \ \ \ \ \ \ \ \ \ \ \ \ \ \ \ \ \ } \\ 
(2)\text{ }\frac{\sup f^{+}}{\int f^{-}dv_{g}}<C%
\end{array}%
\right.
\end{equation*}%
the critical equation%
\begin{equation*}
\Delta ^{2}u+\nabla ^{i}(a\nabla _{i}u)+hu=f\left\vert u\right\vert ^{N-2}u
\end{equation*}%
has a solution of class $C^{4,\alpha }$, for some $\alpha \in (0,1)$, \ with
negative energy.
\end{theorem}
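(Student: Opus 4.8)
The plan is to produce the solution as a rescaling of a minimizer of the quadratic form
\[
Q(u)=\|\Delta u\|_{2}^{2}-\int_{M}a\,|\nabla u|^{2}\,dv_{g}+\int_{M}h\,u^{2}\,dv_{g}
\]
over the set $\Sigma_{\Lambda}=\bigl\{u\in H_{2}:\int_{M}f\,|u|^{N}\,dv_{g}=-1,\ \int_{M}f^{-}|u|^{N}\,dv_{g}\le\Lambda\bigr\}$, where $\Lambda$ depends only on $f^{-}/\int_{M}f^{-}\,dv_{g}$ and is fixed by hypothesis (2). Write $\nu=\inf_{\Sigma_{\Lambda}}Q$. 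If $\nu<0$ is attained at $u_{\ast}$, the Lagrange multiplier rule — with the multiplier identified by testing against $u_{\ast}$ and using $\int_{M}f|u_{\ast}|^{N}\,dv_{g}=-1$ — gives
\[
\Delta^{2}u_{\ast}+\nabla^{i}(a\nabla_{i}u_{\ast})+hu_{\ast}=(-\nu)\,f\,|u_{\ast}|^{N-2}u_{\ast}
\]
weakly, so $v=(-\nu)^{1/(N-2)}u_{\ast}$ solves the critical equation and $\int_{M}f|v|^{N}\,dv_{g}=(-\nu)^{N/(N-2)}(-1)<0$. Testing the equation against $v$ gives $Q(v)=\int_{M}f|v|^{N}\,dv_{g}$, whence the energy $\tfrac12Q(v)-\tfrac1N\int_{M}f|v|^{N}\,dv_{g}=\tfrac{N-2}{2N}\int_{M}f|v|^{N}\,dv_{g}<0$; thus $v$ has negative energy, as required.

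Next, two inputs are needed for the minimization, the second being the less routine one. The level is negative: a nonzero function supported in $\{f<0\}$ — or a constant if $\int_{M}f\,dv_{g}<0$, suitably perturbed otherwise — can be normalised into $\Sigma_{\Lambda}$, and on it $Q<0$ because $h<0$; hence $\nu<0$. A minimizing sequence $(u_{k})$ is bounded in $H_{2}$: since $Q(u_{k})\to\nu<0$ one has $\|\Delta u_{k}\|_{2}^{2}\le\int_{M}a|\nabla u_{k}|^{2}\,dv_{g}-\int_{M}hu_{k}^{2}\,dv_{g}$, which by interpolation forces $\|u_{k}\|_{H_{2}}\le C\|u_{k}\|_{2}$, and then a normalisation argument (pass to $u_{k}/\|u_{k}\|_{2}$, use that the constraints force the normalised weak limit to be supported on $\{f=0\}$, and invoke hypothesis (1) through the definition of $\lambda_{a,f}$) shows $\|u_{k}\|_{2}$ stays bounded. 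This step is genuinely delicate on $H_{2}^{2}(M)$ because, unlike on $H_{1}^{2}(M)$, one cannot freely replace a function by $|u|$; the inequality $\nu>-1/(K_{0}(\sup f^{+})^{2/N})$ — where $K_{0}$ is the best constant in $\|w\|_{N}^{2}\le(K_{0}+\varepsilon)\|\Delta w\|_{2}^{2}+A_{\varepsilon}\|w\|_{2}^{2}$ — should also be extracted here from the choice of $\Lambda$, i.e.\ ultimately from hypothesis (2).

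For compactness, extract $u_{k}\rightharpoonup u$ in $H_{2}$; by the compact embeddings $H_{2}^{2}(M)\hookrightarrow H_{1}^{2}(M)$ and $H_{2}^{2}(M)\hookrightarrow L^{2}(M)$ the lower-order part of $Q$ converges, so $Q(u)\le\liminf_{k}Q(u_{k})=\nu$. At the critical exponent $\int_{M}f|u_{k}|^{N}\,dv_{g}$ may fail to converge to $\int_{M}f|u|^{N}\,dv_{g}$: a concentration–compactness decomposition yields $\int_{M}f|u_{k}|^{N}\,dv_{g}=\int_{M}f|u|^{N}\,dv_{g}+\sum_{j}f(x_{j})\mu_{j}+o(1)$ with $\mu_{j}\ge0$, and $\|\Delta u_{k}\|_{2}^{2}\ge\|\Delta u\|_{2}^{2}+K_{0}^{-1}\sum_{j}\mu_{j}^{2/N}+o(1)$, so that $\nu\ge Q(u)+K_{0}^{-1}\sum_{j}\mu_{j}^{2/N}$. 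An atom at a point of $\{f\le0\}$ is excluded by rescaling the weak limit onto $\Sigma_{\Lambda}$ (it would yield $Q<\nu$ there); an atom at a point of $\{f>0\}$, after rescaling down and using $\bigl|\int_{M}f|u|^{N}\,dv_{g}\bigr|\le1+\sup f^{+}\sum_{j}\mu_{j}$ together with the subadditivity of $t\mapsto t^{2/N}$, forces $\nu\le-1/(K_{0}(\sup f^{+})^{2/N})$ — contradicting the lower bound above. Hence no concentration occurs, $u_{k}\to u$ strongly in $L^{N}$, the two constraints pass to the limit, $u\in\Sigma_{\Lambda}$ and $Q(u)=\nu$.

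Finally, from the weak solution $v$ one obtains a classical one by the usual $L^{p}$-bootstrap for the coercive fourth-order operator $\Delta^{2}+\nabla^{i}(a\nabla_{i}\cdot)+h$: a Trudinger-type truncation promotes $v\in L^{N}$ to $v\in L^{p}$ for all $p<\infty$, and then $L^{p}$-elliptic estimates and Schauder theory give $v\in C^{4,\alpha}(M)$ for some $\alpha\in(0,1)$. (Alternatively one could run a subcritical approximation $q\uparrow N$ along the negative-energy branch of the subcritical problem; the hard core would be the same.) I expect the decisive difficulty to be the compactness step — excluding the Sobolev bubble at points of $\{f>0\}$, which is exactly what the quantitative smallness hypothesis (2), measured against the best constant $K_{0}$, is tailored for — together with the coercivity/boundedness analysis of the \emph{indefinite} form $Q$ on $H_{2}^{2}(M)$, where hypothesis (1) and $\lambda_{a,f}$ do the work but the impossibility of passing to $|u|$ makes the arguments more involved than in the second-order case.
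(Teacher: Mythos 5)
Your route is genuinely different from the paper's: you minimize the indefinite form $Q$ directly at the critical exponent over a constraint set and invoke concentration--compactness, whereas the paper never touches bubbling at all. It takes the negative-energy subcritical solutions $u_{q}$ of Proposition \ref{prop3}, uses the uniform bound $\Vert u_{q}\Vert _{q}^{q}\leq l_{q}$ with $l_{q}\rightarrow l_{N}$ to get $H_{2}$-boundedness, passes to a weak limit $v$ (weak convergence suffices on the right-hand side because $|u_{q}|^{q-2}u_{q}$ is bounded in $L^{N/(N-1)}$), and shows $v\not\equiv 0$ softly: the levels $\mu _{k_{q},q}=(\tfrac{q}{2}-1)\int_{M}f|u_{q}|^{q}dv_{g}$ stay below $\tfrac{1}{2}k^{2/N}\int_{M}h\,dv_{g}<0$, which forces $\int_{M}f|v|^{N}dv_{g}<0$. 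No best Sobolev constant and no atom analysis are needed. Your scheme, by contrast, has genuine gaps as written.

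Concretely: (i) the claim that $Q<0$ on a nonzero function supported in $\{f<0\}$ ``because $h<0$'' is false --- $\Vert \Delta u\Vert _{2}^{2}$ need not be dominated by $-\int_{M}hu^{2}dv_{g}$ for such a function; only the constant test function works (and it does, since hypothesis (2) with $C\leq 1$ and $\mathrm{Vol}(M)=1$ gives $\int_{M}f\,dv_{g}<0$), so the ``suitably perturbed otherwise'' branch must be replaced by an actual argument. (ii) Your Lagrange multiplier step presupposes that the inequality constraint $\int_{M}f^{-}|u_{\ast }|^{N}dv_{g}\leq \Lambda $ is inactive at the minimizer; if it is active a second multiplier appears and you do not obtain the stated equation. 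Proving inactivity is exactly where hypothesis (2) must do quantitative work (it is the analogue of the paper's Lemma \ref{lem6}) and you defer it. (iii) The exclusion of atoms at points of $\{f\leq 0\}$ fails as stated: there $t:=-\int_{M}f|u|^{N}dv_{g}=1+\sum_{j}|f(x_{j})|\mu _{j}\geq 1$, and the rescaling $\tilde{u}=t^{-1/N}u$ gives $Q(\tilde{u})=t^{-2/N}Q(u)\geq Q(u)$ because $Q(u)<0$ and $t^{-2/N}\leq 1$; the rescaling moves the value the wrong way, so ``it would yield $Q<\nu $'' does not follow without a strict-subadditivity estimate comparing $|\nu |\left( 1-t^{-2/N}\right) $ with $K_{0}^{-1}t^{-2/N}\sum_{j}\mu _{j}^{2/N}$, which is absent. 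Together with the unproved bound $\nu >-1/(K_{0}(\sup f^{+})^{2/N})$ that you yourself flag, the compactness step --- which you correctly identify as the crux --- is not closed.
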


To have applications to conformal geometry, we must obtain positive
solutions but this is a difficult problem because of the lack of a maximum
principle, This will be treated in a separated work.

If \ the set $A=\phi $, the condition $(1)$ of Theorem \ref{th1} and \ref%
{th2} is fulfilled.

Suppose that $A\neq \phi $ and let $\mu =\inf_{u\in A}\frac{%
\int_{M}\left\vert \nabla u\right\vert dv_{g}}{\int_{M}u^{2}dv_{g}}$.

\begin{remark}
We get smooth functions for which we have solutions by observing that $%
\int_{\left\{ x\in M\ :\ f(x)\geqslant 0\right\} }dv_{g}<\left(
K_{2}^{2}+\epsilon \right) \left\Vert h\right\Vert _{\infty }+A_{2}\left(
\epsilon \right) +\mu \left\Vert a\right\Vert _{\infty }$ implies $\lambda
_{a,f}>\left\Vert h\right\Vert _{\infty }$ ( See Lemma \ref{lem1'} ) where $%
\epsilon $ is any positive real number and $K_{2}$, $A_{2}\left( \epsilon
\right) $ are the constants of the Sobolev inequality given by Lemma \ref%
{lem1}.
\end{remark}

Let $B_{k,q}=\left\{ u\in H_{2}:\ \left\Vert u\right\Vert _{q}^{q}=k\right\}
,$ where $\left\Vert {}\right\Vert _{q}$ denotes the $L^{q}$-norm, and put $%
\mu _{k,q}=\inf_{u\in B_{k,q}}F_{q}(u)$. \ The method used in this paper
consists in the case of Theorem \ref{th1}, to show that the curve $%
k\rightarrow \mu _{k,q}$ is continuous as a function of the argument $k,$
starts at $0$ goes by a relative negative minimum, which is attained, and
takes positive values for $k$ in some interval $l_{q}$ and finally goes to $%
-\infty $, to do so many a priori estimates are given, then we deduce the
existence of two solutions of the subcritical equation, one of negative
energy and the other of positive energy. For the proof of Thorem \ref{th2},
we show that the sequence of solutions of the subcritical equations, with
negative energies,obtained in Theorem \ref{th1} is bounded in $H_{2}$ as $q$
tends to $N=\frac{2n}{n-4}$, the critical Sobolev exponent. By classical
arguments, we show that up to a subsequence $u_{q}$ converges weakly to a
solution $u$ of the critical equation. After, we show that $u$ is of
negative energy i.e. $u\not\equiv 0$.

\section{Preliminaries}

Let $a$, $h$ be $C^{\infty }$ functions on $M$ with $h$ negative. We suppose
without lost of generality that the Riemannian manifold $(M,g)$ is of volume
equals to $1$. Since it is equivalent to solve the equation (\ref{1}) with $%
f $ \ or $\alpha f$ $\ $( $\alpha $ a real number $\neq 0$ ), we consider
the functional $F_{q}$ defined on $H_{2}$ by 
\begin{equation*}
F_{q}(u)=\left\Vert \Delta u\right\Vert _{2}^{2}-\int_{M}a\left\vert \nabla
u\right\vert ^{2}dv_{g}+\int_{M}hu^{2}dv_{g}-\int_{M}f\left\vert
u\right\vert ^{q}dv_{g},\text{ \ \ \ \ }q\in \left( 2,N\right)
\end{equation*}%
and set 
\begin{equation*}
B_{k,q}=\left\{ u\in H_{2}(M),\text{ }\left\Vert u\right\Vert
_{q}^{q}=k\right\}
\end{equation*}%
where $k$ is some constant. Let%
\begin{equation*}
\mu _{k,q}=\underset{u\in B_{k,q}}{\inf }F_{q}(u)\text{,}
\end{equation*}%
we state

\begin{proposition}
\label{prop1} The infimum $\ \mu _{k,q}$ is achieved. Futhermore any
minimizer of the functional $F_{q}$ is of class $C^{4,\alpha }$, $\alpha \in
\left( 0,1\right) $.
\end{proposition}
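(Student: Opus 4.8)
The plan is to establish the proposition in two stages: first, existence of a minimizer of $F_q$ on $B_{k,q}$ via the direct method in the calculus of variations, using the subcriticality $q<N$ to get compactness; second, regularity of any minimizer via elliptic bootstrap applied to the Euler--Lagrange equation. For the existence part, I would take a minimizing sequence $(u_m)\subset B_{k,q}$ with $F_q(u_m)\to\mu_{k,q}$. The first task is to show that $(u_m)$ is bounded in $H_2$. This is where the hypotheses enter: write $F_q(u)=\|\Delta u\|_2^2-\int_M a|\nabla u|^2\,dv_g+\int_M h u^2\,dv_g-\int_M f|u|^q\,dv_g$, and control the lower-order terms. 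The term $\int_M f|u|^q\,dv_g$ is bounded on $B_{k,q}$ because $\int_M f|u|^q\,dv_g\le (\sup_M f^+)\,\|u\|_q^q=(\sup_M f^+)k$, so it contributes a fixed constant. The quadratic terms $-\int a|\nabla u|^2+\int h u^2$ must be dominated by $\|\Delta u\|_2^2$; here one uses that $h$ is negative and invokes an interpolation/Sobolev inequality (the Sobolev inequality of Lemma \ref{lem1} controlling $\|\nabla u\|_2^2$ by $\varepsilon\|\Delta u\|_2^2+C_\varepsilon\|u\|_2^2$, a standard consequence since $\Delta^2$ is the leading operator) together with the coercivity built into the condition $|h(x)|<\lambda_{a,f}$. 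Combined with the constraint $\|u\|_q^q=k$ and the compact embedding $H_2\hookrightarrow L^q$ for $q<N$, one deduces $\|u_m\|_{H_2}$ is bounded; otherwise $F_q(u_m)\to+\infty$, contradicting that it converges to the finite infimum $\mu_{k,q}$.

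Having a bounded minimizing sequence, extract a subsequence with $u_m\rightharpoonup u$ weakly in $H_2$. By the Rellich--Kondrachov theorem, since $q<N$ the embedding $H_2\hookrightarrow L^q(M)$ is compact, so $u_m\to u$ strongly in $L^q$ and in $L^2$; in particular $\|u\|_q^q=\lim\|u_m\|_q^q=k$, so $u\in B_{k,q}$, i.e.\ the constraint is preserved. Likewise $\int_M f|u_m|^q\to\int_M f|u|^q$ and $\int_M h u_m^2\to\int_M h u^2$ by strong $L^q$, resp.\ $L^2$, convergence. For the gradient term, $u_m\to u$ strongly in $H_1^2$ (compact embedding $H_2\hookrightarrow H_1^2$), so $\int_M a|\nabla u_m|^2\to\int_M a|\nabla u|^2$. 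Finally $\|\Delta u\|_2^2\le\liminf\|\Delta u_m\|_2^2$ by weak lower semicontinuity of the norm. Putting these together, $F_q(u)\le\liminf F_q(u_m)=\mu_{k,q}$, and since $u\in B_{k,q}$ we also have $F_q(u)\ge\mu_{k,q}$; hence $F_q(u)=\mu_{k,q}$ and the infimum is achieved.

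For the regularity statement, let $u$ be any minimizer. By the Lagrange multiplier rule, there is $\lambda\in\mathbb{R}$ with
\begin{equation*}
\Delta^2 u+\nabla^i(a\nabla_i u)+hu=\lambda f|u|^{q-2}u
\end{equation*}
in the weak sense; absorbing $\lambda$ into $f$ (legitimate since the problem is invariant under $f\mapsto\alpha f$, as noted in the text, or simply carrying $\lambda$ along) we may treat the right-hand side as $\tilde f|u|^{q-2}u$ with $\tilde f\in C^\infty$. Now bootstrap: $u\in H_2\subset L^N$, so $|u|^{q-2}u\in L^{N/(q-1)}$, and since $q-1<N-1$ one has $N/(q-1)>N/(N-1)>1$; elliptic $L^p$-regularity for the fourth-order operator $\Delta^2+\nabla^i(a\nabla_i\cdot)+h$ (which is elliptic with smooth coefficients) gives $u\in H_4^{p}$ for that exponent $p$, hence by Sobolev embedding $u$ lies in a better $L^s$ space. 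Iterating this bootstrap a finite number of times lifts $u$ into $L^\infty$, after which the right-hand side $\tilde f|u|^{q-2}u$ is bounded, then H\"older continuous (using $|u|^{q-2}u$ is locally H\"older once $u$ is continuous, as $q>2$), and Schauder estimates for the fourth-order elliptic operator give $u\in C^{4,\alpha}(M)$ for some $\alpha\in(0,1)$. The main obstacle is the first part: verifying carefully that the hypotheses $(1)$--$(3)$, via the $\lambda_{a,f}$-coercivity and the Sobolev inequality of Lemma \ref{lem1}, genuinely force boundedness of the minimizing sequence despite the sign-changing $f$ and the possibly non-coercive quadratic form $u\mapsto\|\Delta u\|_2^2-\int a|\nabla u|^2+\int hu^2$; the regularity bootstrap is routine once one has the weak equation and the smoothness of the coefficients.
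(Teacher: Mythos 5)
Your proposal is correct and follows essentially the same route as the paper: the direct method, with the interpolation inequality $\left\Vert \nabla u\right\Vert _{2}^{2}\leq \eta \left\Vert \Delta u\right\Vert _{2}^{2}+C(\eta )\left\Vert u\right\Vert _{2}^{2}$ used to absorb the $a$-term into $\left\Vert \Delta u\right\Vert _{2}^{2}$, compact embeddings for $q<N$ to pass to the limit along a minimizing sequence, and then the Euler--Lagrange equation plus elliptic bootstrap for regularity. Two small corrections. First, the coercivity here does not come from the condition $\left\vert h\right\vert <\lambda _{a,f}$ --- that hypothesis is not part of Proposition \ref{prop1} and is not needed: on the constraint set $B_{k,q}$ one has $\left\Vert u\right\Vert _{2}^{2}\leq \left\Vert u\right\Vert _{q}^{2}=k^{2/q}$ by H\"{o}lder (unit volume, $q>2$), so all the $L^{2}$ lower-order terms are bounded by constants depending only on $k$, and $F_{q}(u)\geq \alpha \left\Vert \Delta u\right\Vert _{2}^{2}+C_{1}$ follows with no assumption on $h$ or $f$ beyond smoothness; this is exactly the paper's estimate (\ref{10}). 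Second, the Lagrange multiplier attaches to the derivative of the constraint $\left\Vert u\right\Vert _{q}^{q}=k$, so the Euler--Lagrange equation reads $\Delta ^{2}u+\nabla ^{i}(a\nabla _{i}u)+hu=\left( \lambda +\tfrac{q}{2}f\right) \left\vert u\right\vert ^{q-2}u$ rather than having right-hand side $\lambda f\left\vert u\right\vert ^{q-2}u$; since the coefficient $\lambda +\tfrac{q}{2}f$ is still smooth, your bootstrap argument is unaffected.
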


\begin{proof}
We have 
\begin{equation}
F_{q}(u)\geq \left\Vert \Delta u\right\Vert _{2}^{2}-\left\Vert
a_{+}\right\Vert _{\infty }\left\Vert \nabla u\right\Vert _{2}^{2}+k^{\frac{2%
}{q}}\min_{x\in M}h(x)  \label{5}
\end{equation}%
\begin{equation*}
-k\max_{x\in M}f(x).
\end{equation*}%
where $a_{+}(x)=\max \left[ a(x),0\right] $ and $\left\Vert .\right\Vert
_{\infty }$ is the supremum norm.

The following formula is well known on compact manifolds%
\begin{equation*}
\left\Vert \nabla ^{2}u\right\Vert _{2}^{2}\leq \left\Vert \Delta
u\right\Vert _{2}^{2}-\int_{M}Ric_{ij}\nabla u_{i}\nabla u_{j}dv_{g}
\end{equation*}%
\begin{equation}
\leq \left\Vert \Delta u\right\Vert _{2}^{2}+\beta \left\Vert \nabla
u\right\Vert _{2}^{2}.  \label{6}
\end{equation}%
where $\beta $ is some constant. As it is shown in (\cite{1} p.93), for any $%
\eta >0$, there exists a constant $C(\eta )$ depending on $\eta $ such that 
\begin{equation}
\left\Vert \nabla u\right\Vert _{2}^{2}\leq \eta \left\Vert \nabla
^{2}u\right\Vert _{2}^{2}+C(\eta )\left\Vert u\right\Vert _{2}^{2}  \label{7}
\end{equation}%
Plugging (\ref{6}) in (\ref{7}), we get%
\begin{equation}
\left\Vert \nabla u\right\Vert _{2}^{2}\leq \eta \left\Vert \Delta
u\right\Vert _{2}^{2}+\eta \beta \left\Vert \nabla u\right\Vert
_{2}^{2}+C(\eta )\left\Vert u\right\Vert _{2}^{2}  \label{8}
\end{equation}%
and choosing $\eta $ such that $\eta \beta \leq \frac{1}{2}$, we obtain%
\begin{equation}
\left\Vert \nabla u\right\Vert _{2}^{2}\leq 2\eta \left\Vert \Delta
u\right\Vert _{2}^{2}+2C(\eta )\left\Vert u\right\Vert _{2}^{2}.  \label{9}
\end{equation}%
The inequality (\ref{5}) reads then 
\begin{equation*}
F_{q}(u)\geq \left\Vert \Delta u\right\Vert _{2}^{2}\left( 1-2\eta
\left\Vert a_{+}\right\Vert _{\infty }\right)
\end{equation*}%
\begin{equation*}
+k^{\frac{2}{q}}\left( \min_{x\in M}h(x)-2C(\eta )\left\Vert
a_{+}\right\Vert _{\infty }\right) -k\max_{x\in M}f(x)
\end{equation*}%
and then, with $\eta $ small enough, we have \ 
\begin{equation*}
1-2\eta \left\Vert a_{+}\right\Vert _{\infty }=\alpha >0
\end{equation*}%
so%
\begin{equation}
F_{q}(u)\geq \alpha \left\Vert \Delta u\right\Vert _{2}^{2}+C_{1}  \label{10}
\end{equation}%
where $\alpha $ is some positive constant and $C_{_{1}\text{ }}$is a
constant independent of $u$. Let $(u_{j})$ be a minimizing sequence of the
functional $F_{q}$ in $B_{k,q}$; \ so for $j$ sufficiently large $%
F_{q}(u_{j})\leq $ $\mu _{k,q}+1$ and by (\ref{10}), we get 
\begin{equation*}
\left\Vert \Delta u_{j}\right\Vert _{2}^{2}\leq \frac{1}{\alpha }\left( \mu
_{k,q}+1-C_{1}\right) \text{.}
\end{equation*}%
By formula (\ref{9}) and the fact 
\begin{equation*}
\left\Vert u_{j}\right\Vert _{2}^{2}\leq k^{\frac{2}{q}}\text{,}
\end{equation*}%
we obtain that $\left\Vert \nabla u_{j}\right\Vert _{2}^{2}$ is bounded. It
follows that the sequence $(u_{j})$ is bounded in $H_{2}.$ Consequently $%
u_{j}$ converges weakly in $H_{2}$, the compact embedding of $H_{2}$ in $%
L^{q}$ and the unicity of the weak limit allow us to claim that there is a
subsequence of $(u_{j})$ still denoted $\left( u_{j}\right) $ such that 
\begin{equation*}
u_{j}\rightarrow u\text{ strongly in }L^{s}\text{ \ \ for any }s<N
\end{equation*}%
\begin{equation*}
\nabla u_{j}\rightarrow \nabla u\text{ strongly in }L^{2}
\end{equation*}%
and%
\begin{equation*}
\left\Vert u\right\Vert _{H_{2}}\leq \lim_{j}\inf \left\Vert
u_{j}\right\Vert _{H_{2}}\text{ .}
\end{equation*}%
Consequently%
\begin{equation*}
F_{q}(u)=\left\Vert \Delta u\right\Vert _{2}^{2}-\int_{M}a\left\vert \nabla
u\right\vert ^{2}dv_{g}+\int_{M}hu^{2}dv_{g}-\int_{M}f\left\vert
u\right\vert ^{q}dv_{g}
\end{equation*}%
\begin{equation*}
\leq \lim \inf_{j}\left\Vert \Delta u_{j}\right\Vert
_{2}^{2}-\lim_{j}\int_{M}a\left\vert \nabla u_{j}\right\vert
^{2}dv_{g}+\lim_{j}\int_{M}hu_{j}^{2}dv_{g}-\lim_{j}\int_{M}f\left\vert
u_{J}\right\vert ^{q}dv_{g}
\end{equation*}%
\begin{equation*}
=\lim_{J}F_{q}(u_{j})=\mu _{k,q}
\end{equation*}%
and since clearly%
\begin{equation*}
\left\Vert u\right\Vert _{q}^{q}=k
\end{equation*}%
we obtain that 
\begin{equation*}
F_{q}(u)=\mu _{k,q}\text{ .\ }
\end{equation*}%
So $u$ fulfills 
\begin{equation*}
\int_{M}\Delta u.\Delta vdv_{g}-\int_{M}a(x)\nabla ^{i}u.\nabla
_{i}vdv_{g}+\int_{M}h(x)uvdv_{g}
\end{equation*}%
\begin{equation*}
-\frac{q}{2}\int_{M}f(x)\left\vert u\right\vert ^{q-2}uvdv_{g}=\lambda
_{k,q}\int_{M}\left\vert u\right\vert ^{q-2}uvdv_{g}
\end{equation*}%
for any $v\in H_{2}$; where $\lambda _{k,q}$ is the Lagrange multiplier and $%
u$ is a weak solution of the equation%
\begin{equation}
\Delta ^{2}u+\nabla ^{i}(a\nabla _{i}u)+hu=\left( \lambda _{k,q}+\frac{q}{2}%
f\right) \left\vert u\right\vert ^{q-2}u\text{.}  \label{11}
\end{equation}

Using the bootstrap method, we show that $u\in L^{s}(M)$ \ for any $s<N$, so 
$P(u)=\Delta ^{2}u+\nabla ^{i}(a\nabla _{i}u)+hu\in L^{s}(M)$ \ for any $s<N$
and since $P$ is a fourth order elliptic operator, it follows by a well
known regularity theorem that $P(u)\in C^{0,\alpha }(M)$ for some $\alpha
\in (0,1)$. Then $u\in C^{4,\alpha }(M)$ .
\end{proof}

\begin{proposition}
\label{prop2} $\ \ \mu _{k,q}$ is continuous as a function of the argument $%
k $ .
\end{proposition}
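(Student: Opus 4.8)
The plan is to exploit the homogeneity of the constraint $\|u\|_q^q=k$ together with the coercivity estimate (\ref{10}) established in the proof of Proposition \ref{prop1}. Fix $k>0$ and write $Q(u)=\|\Delta u\|_2^2-\int_M a|\nabla u|^2\,dv_g+\int_M hu^2\,dv_g$ and $N(u)=\int_M f|u|^q\,dv_g$, so that $F_q(u)=Q(u)-N(u)$ and, for every $t>0$, the scaling identity $F_q(tu)=t^2Q(u)-t^qN(u)$ holds. Since $\|tu\|_q^q=t^qk$ whenever $\|u\|_q^q=k$, the dilation $u\mapsto tu$ is a bijection from $B_{k,q}$ onto $B_{t^qk,q}$, and therefore
\[
\mu_{t^qk,q}=\inf_{u\in B_{k,q}}\bigl(t^2Q(u)-t^qN(u)\bigr),\qquad t>0 .
\]
It then suffices to prove that $t\mapsto\mu_{t^qk,q}$ is continuous at $t=1$; the substitution $k'=t^qk$ (a homeomorphism of $(0,\infty)$) converts this into continuity of $k'\mapsto\mu_{k',q}$ at $k$.

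For the upper estimate I would note that for each fixed $u\in B_{k,q}$ the function $t\mapsto t^2Q(u)-t^qN(u)$ is continuous, hence $\mu_{t^qk,q}$ is a pointwise infimum of continuous functions of $t$ and so is upper semicontinuous. In particular $\limsup_{k'\to k}\mu_{k',q}\le\mu_{k,q}$.

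The lower estimate is where the real work lies. For $k'$ in a neighbourhood of $k$ take a minimizer $u_{k'}\in B_{k',q}$ of $F_q$, which exists by Proposition \ref{prop1}, and first show that $Q(u_{k'})$ and $N(u_{k'})$ stay bounded as $k'\to k$. Indeed $|N(u_{k'})|\le\|f\|_\infty k'$ is plainly bounded near $k$; and applying (\ref{10}) with $k'$ in place of $k$ gives $\mu_{k',q}=F_q(u_{k'})\ge\alpha\|\Delta u_{k'}\|_2^2+C_1(k')$, where $C_1(k')$ is the explicit constant of that estimate, continuous in $k'$, so $\mu_{k',q}$ is bounded below near $k$, while the upper semicontinuity just proved bounds it from above. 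Hence $Q(u_{k'})=\mu_{k',q}+N(u_{k'})$ is bounded for $k'$ near $k$. Now put $s=(k/k')^{1/q}$, so $s\to1$ as $k'\to k$ and $su_{k'}\in B_{k,q}$; then
\[
\mu_{k,q}\le F_q(su_{k'})=\mu_{k',q}+(s^2-1)Q(u_{k'})-(s^q-1)N(u_{k'}).
\]
Letting $k'\to k$ and using the boundedness of $Q(u_{k'})$ and $N(u_{k'})$, the last two terms vanish, whence $\mu_{k,q}\le\liminf_{k'\to k}\mu_{k',q}$. Combining with the upper estimate yields $\lim_{k'\to k}\mu_{k',q}=\mu_{k,q}$.

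The main obstacle, as this makes clear, is keeping the minimizers $u_{k'}$ under uniform control: a priori they could drift off in $H_2$ as the constraint level moves, and then the competitor $su_{k'}$ would be useless for bounding $\mu_{k,q}$. This is exactly what the coercivity inequality (\ref{10}) prevents once it is paired with the (easy) local upper bound on $\mu_{k',q}$ furnished by the upper-semicontinuity half; the term $N(u_{k'})$ needs no such argument since it is dominated by $\|f\|_\infty k'$ directly.
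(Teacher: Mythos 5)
Your proof is correct and follows essentially the same route as the paper's: both directions are obtained by rescaling minimizers between the constraint levels $k$ and $k'$, with the quadratic part $Q$ of the minimizers kept under control via the coercivity estimate (\ref{10}) and the trivial bound $|N(u)|\le\|f\|_\infty\|u\|_q^q$. Your phrasing of the upper bound as upper semicontinuity of an infimum of continuous functions is a tidier packaging of the same computation the paper performs by scaling the fixed minimizer at level $k$.
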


\begin{proof}
For any $k$ , $l\in R^{+}$, let $u$ and $v$ be two functions of norm $1$ in $%
L^{q}$ such that $F_{q}(k^{\frac{1}{q}}u)=\mu _{k,q}$ \ and $F_{q}(l^{\frac{1%
}{q}}v)=\mu _{l,q}$ .

Then 
\begin{equation*}
\mu _{l,q}-\mu _{k,q}=F_{q}(l^{\frac{1}{q}}v)-F_{q}(k^{\frac{1}{q}%
}v)+F_{q}(k^{\frac{1}{q}}v)-\mu _{k,q}
\end{equation*}%
\begin{equation*}
=F_{q}(k^{\frac{1}{q}}v)-\mu _{k,q}
\end{equation*}%
\begin{equation*}
+(l^{\frac{2}{q}}-k^{\frac{2}{q}})\left( \left\Vert \Delta v\right\Vert
_{2}^{2}-\int_{M}a\left\vert \nabla v\right\vert
^{2}dv_{g}+\int_{M}hv^{2}dv_{g}\right)
\end{equation*}%
\begin{equation*}
-(l-k)\int_{M}f\left\vert v\right\vert ^{q}dv_{g}\text{.}
\end{equation*}%
On the other hand, we have%
\begin{equation*}
\mu _{l,q}=F_{q}(l^{\frac{1}{q}}v)=l^{\frac{2}{q}}\left( \left\Vert \Delta
v\right\Vert _{2}^{2}-\int_{M}a\left\vert \nabla v\right\vert
^{2}dv_{g}+\int_{M}hv^{2}dv_{g}\right) -l\int_{M}f\left\vert v\right\vert
^{q}dv_{g}
\end{equation*}%
\begin{equation*}
\leq F_{q}(l^{\frac{1}{q}})=l^{\frac{2}{q}}\int_{M}hdv_{g}-l\int_{M}fdv_{g}
\end{equation*}%
i.e.%
\begin{equation*}
\left\Vert \Delta v\right\Vert _{2}^{2}-\int_{M}a\left\vert \nabla
v\right\vert ^{2}dv_{g}+\int_{M}hv^{2}dv_{g}\leq
\end{equation*}%
\begin{equation*}
\int_{M}hdv_{g}-l^{1-\frac{2}{q}}\int_{M}fdv_{g}+l^{1-\frac{2}{q}%
}\int_{M}f\left\vert v\right\vert ^{q}dv_{g}\text{.}
\end{equation*}%
Since $\left\Vert v\right\Vert _{q}^{q}=1$, it follows that the term $%
\int_{M}f\left\vert v\right\vert ^{q}dv_{g}$ is bounded for any $l$ in a
neighborhood of $k$ and so the term $\left\Vert \Delta v\right\Vert
_{2}^{2}-\int_{M}a\left\vert \nabla v\right\vert
^{2}dv_{g}+\int_{M}hv^{2}dv_{g}$ is upper bounded. Also since $\mu _{l,q}$
is lower bounded, it follows that $\left\Vert \Delta v\right\Vert
_{2}^{2}-\int_{M}a\left\vert \nabla v\right\vert
^{2}dv_{g}+\int_{M}hv^{2}dv_{g}$ \ is bounded in a neighborhood of $k$.

Consequently 
\begin{equation*}
\lim_{l\rightarrow k}\inf (\mu _{l,q}-\mu _{k,q})\geq \lim_{l\rightarrow
k}\inf \left( F_{q}(k^{\frac{1}{q}}v)-\mu _{k,q}\right)
\end{equation*}%
and by the definition of $\mu _{k,q}$, we get 
\begin{equation}
\lim_{l\rightarrow k}\inf (\mu _{l,q}-\mu _{k,q})\geq 0\text{ .}  \label{12}
\end{equation}%
By writing%
\begin{equation*}
\mu _{l,q}-\mu _{k,q}=\mu _{l,q}-F_{q}(l^{\frac{1}{q}}u)+F_{q}(l^{\frac{1}{q}%
}u)-F_{q}(k^{\frac{1}{q}}u)
\end{equation*}%
\begin{equation*}
=\mu _{l,q}-F_{q}(l^{\frac{1}{q}}u)
\end{equation*}%
\begin{equation*}
+(l^{\frac{2}{q}}-k^{\frac{2}{q}})\left( \left\Vert \Delta u\right\Vert
_{2}^{2}-\int_{M}a\left\vert \nabla u\right\vert
^{2}dv_{g}+\int_{M}hu^{2}dv_{g}\right)
\end{equation*}%
\begin{equation*}
-(l-k)\int_{M}f\left\vert u\right\vert ^{q}dv_{g}
\end{equation*}%
we get 
\begin{equation*}
\lim_{l\rightarrow k}\sup (\mu _{l,q}-\mu _{k,q})\leq 0\text{ }
\end{equation*}%
and taking into account of (\ref{12}), we obtain%
\begin{equation*}
\lim_{l\rightarrow k}\mu _{l,q}=\mu _{k,q}\text{ .}
\end{equation*}
\end{proof}

\section{A priori estimates}

First, we quote the following Lemma due to Djadli-Hebey-Ledoux and improved
by Hebey \cite{9}.

\begin{lemma}
\label{lem1} Let $M$ be a Riemannian compact manifold with dimension $n\geq
5 $. For any $\epsilon >0$ there is a constant $A_{2}(\epsilon )$ such that
for any $u\in H_{2}$ \ $\left\Vert u\right\Vert _{N}^{2}\leq
K_{2}^{2}(1+\epsilon )\left\Vert \Delta u\right\Vert _{2}^{2}+A_{2}(\epsilon
)\left\Vert u\right\Vert _{2}^{2}$ with $K_{2}^{-2}=\pi
^{2}n(n-4)(n^{2}-4)\Gamma \left( \frac{n}{2}\right) ^{\frac{4}{n}}\Gamma
\left( n\right) ^{-\frac{4}{n}}$.
\end{lemma}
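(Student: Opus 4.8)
The statement to prove is the sharp second-order Sobolev inequality of Djadli--Hebey--Ledoux type: for every $\epsilon>0$ there is $A_2(\epsilon)$ with $\|u\|_N^2\le K_2^2(1+\epsilon)\|\Delta u\|_2^2+A_2(\epsilon)\|u\|_2^2$ for all $u\in H_2$, where $K_2$ is the best constant in the Euclidean Sobolev embedding $\dot H_2^2(\mathbb{R}^n)\hookrightarrow L^N(\mathbb{R}^n)$. Since this is explicitly attributed in the text to Djadli--Hebey--Ledoux (as improved by Hebey, reference \cite{9}), the honest plan is to \emph{cite it} rather than to reprove it; but if a self-contained argument is wanted, here is the route I would follow.

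First I would recall the non-sharp embedding $H_2^2(M)\hookrightarrow L^N(M)$, which on a compact manifold is standard, and the fact that on $H_2$ the quantity $\|\Delta u\|_2^2+\|u\|_2^2$ is equivalent to $\|u\|_{H_2}^2$ (this uses inequalities (\ref{6})--(\ref{9}) already established in the excerpt, which control $\|\nabla u\|_2^2$ and $\|\nabla^2 u\|_2^2$ by $\|\Delta u\|_2^2+\|u\|_2^2$). The content is therefore entirely in getting the \emph{optimal} leading constant $K_2^2(1+\epsilon)$. The classical strategy is a proof by contradiction combined with a concentration argument: suppose the inequality fails for some fixed $\epsilon_0>0$ no matter how large $A_2$ is taken; then there is a sequence $u_j\in H_2$, normalized by $\|u_j\|_N=1$, with $K_2^2(1+\epsilon_0)\|\Delta u_j\|_2^2+j\|u_j\|_2^2<1$. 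Hence $\|\Delta u_j\|_2^2$ stays bounded while $\|u_j\|_2^2\to 0$, so by the equivalence of norms $u_j\rightharpoonup 0$ in $H_2$ and strongly in $L^2$; by the compact embedding $H_2\hookrightarrow L^s$ for $s<N$ one also gets $u_j\to 0$ in every $L^s$, $s<N$, so all the $L^N$-mass must concentrate.

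Next I would localize: cover $M$ by finitely many charts, take a partition of unity $(\eta_\alpha)$ subordinate to it, and write $1=\|u_j\|_N^N=\sum_\alpha \int |\eta_\alpha u_j|^N\,dv_g$ up to lower-order terms (the cross terms and derivative-of-$\eta$ terms are controlled by the $L^s$, $s<N$, smallness just obtained, and by Leibniz expansion of $\Delta(\eta_\alpha u_j)$, which costs only $\|\nabla u_j\|_2$ and $\|u_j\|_2$, again negligible). On each chart I would transplant $\eta_\alpha u_j$ to $\mathbb{R}^n$, where the metric coefficients can be made as close to the Euclidean ones as desired by shrinking the charts (this is where a further $\epsilon$ of slack enters, absorbed into the $1+\epsilon$), and apply the \emph{exact} Euclidean inequality $\|v\|_{L^N(\mathbb{R}^n)}^2\le K_2^2\|\Delta v\|_{L^2(\mathbb{R}^n)}^2$. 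Summing over $\alpha$, using $\sum_\alpha \|\Delta(\eta_\alpha u_j)\|_2^2\le (1+o(1))\|\Delta u_j\|_2^2+C\|u_j\|_{H_1^2}^2$ and the concavity-type inequality $\big(\sum a_\alpha\big)^{1}\ge \big(\sum a_\alpha^{N/2}\big)^{2/N}$ appropriately (here one uses $N/2>1$ in the direction $\|(\eta_\alpha u_j)\|_N^2$-summation, so that $\sum_\alpha \|\eta_\alpha u_j\|_N^2 \le$ is replaced by working with the $N$-th powers and then taking the $2/N$ power at the end), one obtains $1=\|u_j\|_N^N\le \big(K_2^2(1+\epsilon_0/2)\|\Delta u_j\|_2^2+o(1)\big)^{N/2}$, i.e. $1\le K_2^2(1+\epsilon_0/2)\|\Delta u_j\|_2^2+o(1)$, contradicting $K_2^2(1+\epsilon_0)\|\Delta u_j\|_2^2<1$.

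The main obstacle is precisely the bookkeeping in this last step: one must make sure that passing from $u_j$ to the pieces $\eta_\alpha u_j$, transplanting to $\mathbb{R}^n$, and reassembling, loses at most a factor $(1+\epsilon)$ in the leading constant and produces only errors that are genuinely $o(1)$ or absorbable into $A_2(\epsilon)\|u_j\|_2^2$. The delicate points are: (i) the Leibniz terms in $\Delta(\eta_\alpha u_j)$ involve $\nabla\eta_\alpha\cdot\nabla u_j$ and $(\Delta\eta_\alpha)u_j$, which a priori are only $O(\|u_j\|_{H_1^2})$ and need the interpolation (\ref{7}) to be pushed into the small-$L^2$ regime with a controlled constant; and (ii) the subadditivity across the partition of unity must be handled in the correct direction, which is why one carries the $N$-th powers through and only takes the $2/N$-th root at the very end. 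This is exactly the technical heart of the Djadli--Hebey--Ledoux argument and its refinement by Hebey, and for the purposes of the present paper I would simply invoke Lemma~\ref{lem1} as stated, citing \cite{9} and \cite{1}.
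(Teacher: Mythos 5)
The paper gives no proof of Lemma \ref{lem1}: it is explicitly quoted from Djadli--Hebey--Ledoux as improved by Hebey \cite{9}, so your primary recommendation --- to cite the reference --- is exactly what the paper itself does. Your sketch of the underlying argument (partition of unity with $\sum_\alpha \eta_\alpha^2=1$, triangle inequality in $L^{N/2}$, local comparison with the sharp Euclidean constant, absorption of the Leibniz terms via the interpolation inequality (\ref{7})) is the standard route and contains no error of substance, though the contradiction/concentration wrapper is unnecessary once the direct localization estimate is in hand.
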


Suppose that the set $A=\left\{ u\in H_{2},\text{ }u\not\equiv 0\text{ s. t. 
}\int_{M}f^{-}\left\vert u\right\vert dv_{g}=0\right\} $ is non empty.

\begin{lemma}
\label{lem1'} If $\int_{\left\{ x\in M\ :\ f\left( x\right) \geqslant
0\right\} }dv_{g}$ as a function of the variable $f$ tends to $0$, $\lambda
_{a,f}$ goes to $+\infty $. In particular the condition\ $\int_{\left\{ x\in
M\ :\ f(x)\geqslant 0\right\} }dv_{g}<K_{2}^{2}\left( 1+\epsilon \right)
\left\Vert h\right\Vert _{\infty }+A_{2}\left( \epsilon \right) +\mu
\left\Vert a\right\Vert _{\infty }\ \ $\ implies that $\ \ \lambda
_{a,f}>\left\Vert h\right\Vert _{\infty }$.
\end{lemma}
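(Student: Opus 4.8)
The goal is to show: (i) as $\int_{\{f\ge 0\}}dv_g \to 0$ we have $\lambda_{a,f}\to+\infty$; and (ii) the explicit bound $\int_{\{f\ge0\}}dv_g < K_2^2(1+\epsilon)\|h\|_\infty + A_2(\epsilon) + \mu\|a\|_\infty$ forces $\lambda_{a,f}>\|h\|_\infty$. The natural strategy is to take any $u\in A$, normalize so that $\int_M u^2\,dv_g = 1$, and bound the Rayleigh-type quotient from below. Since $u\in A$ means $\int_M f^-|u|\,dv_g = 0$, the function $u$ is supported (up to a null set) in the set $\Omega=\{x\in M: f(x)\ge 0\}$; let $\omega = \int_\Omega dv_g$. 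First I would estimate $\int_M u^2\,dv_g = \int_\Omega u^2\,dv_g$ by Hölder on $\Omega$: $1=\int_\Omega u^2\,dv_g \le \big(\int_\Omega |u|^N\,dv_g\big)^{2/N}\,\omega^{1-2/N} \le \|u\|_N^2\,\omega^{4/n}$, using $1-2/N = 4/n$.

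Next I would feed in the Sobolev inequality of Lemma \ref{lem1}: $\|u\|_N^2 \le K_2^2(1+\epsilon)\|\Delta u\|_2^2 + A_2(\epsilon)\|u\|_2^2 = K_2^2(1+\epsilon)\|\Delta u\|_2^2 + A_2(\epsilon)$. Combining with the Hölder step gives $\omega^{-4/n} \le K_2^2(1+\epsilon)\|\Delta u\|_2^2 + A_2(\epsilon)$, i.e. a lower bound $\|\Delta u\|_2^2 \ge \big(\omega^{-4/n}-A_2(\epsilon)\big)/\big(K_2^2(1+\epsilon)\big)$ whenever the right side is positive. Then for the numerator of the quotient defining $\lambda_{a,f}$, namely $\int_M(\Delta u)^2\,dv_g - \int_M a|\nabla u|^2\,dv_g$, I would control the gradient term using $\int_M|\nabla u|^2\,dv_g \le \|a\|_\infty$-times... more precisely $\big|\int_M a|\nabla u|^2\big| \le \|a\|_\infty \|\nabla u\|_2^2$, and bound $\|\nabla u\|_2^2$ in terms of $\|\Delta u\|_2^2$ and $\|u\|_2^2=1$. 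Here I can invoke (\ref{9}) from the proof of Proposition \ref{prop1}: $\|\nabla u\|_2^2 \le 2\eta\|\Delta u\|_2^2 + 2C(\eta)$ for suitable small $\eta$; alternatively, interpolate $\|\nabla u\|_2^2 \le \|\Delta u\|_2\|u\|_2 = \|\Delta u\|_2$ on a closed manifold (after an integration by parts and Cauchy–Schwarz — though on a general compact manifold one must be a touch careful, so using (\ref{9}) is safest). This yields $\int_M(\Delta u)^2 - \int_M a|\nabla u|^2 \ge (1-2\eta\|a\|_\infty)\|\Delta u\|_2^2 - 2C(\eta)\|a\|_\infty$, and since $\|u\|_2^2=1$ the Rayleigh quotient is bounded below by this same quantity. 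Substituting the lower bound on $\|\Delta u\|_2^2$ shows $\lambda_{a,f}\ge$ an explicit increasing function of $\omega^{-4/n}$, which $\to+\infty$ as $\omega\to 0$; this proves (i).

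For part (ii) I would track the constants carefully so as to identify exactly when the lower bound exceeds $\|h\|_\infty$. The cleanest route: observe that $\mu=\inf_{u\in A}\|\nabla u\|_2^2/\|u\|_2^2$ so on $A$ we have $\|\nabla u\|_2^2 \ge \mu$ is false in general — rather $\mu$ is a lower bound for the quotient, so I would instead use $\mu$ as defined to relate things; actually since the stated inequality features $\mu\|a\|_\infty$ additively alongside $K_2^2(1+\epsilon)\|h\|_\infty+A_2(\epsilon)$, the intended argument is: $\lambda_{a,f} > \|h\|_\infty$ iff for all $u\in A$ with $\|u\|_2^2=1$, $\|\Delta u\|_2^2 - \int_M a|\nabla u|^2 > \|h\|_\infty$; a sufficient condition is $\|\Delta u\|_2^2 > \|h\|_\infty + \|a\|_\infty\|\nabla u\|_2^2$, and bounding $\|\nabla u\|_2^2$ by (a constant related to) $\mu$ and $\|\Delta u\|_2^2$ by the Hölder–Sobolev lower bound $\omega^{-4/n}/(K_2^2(1+\epsilon)) - A_2(\epsilon)/(K_2^2(1+\epsilon))$, rearranging gives precisely the stated threshold on $\omega=\int_{\{f\ge0\}}dv_g$.

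**Main obstacle.** The delicate point is the gradient term $\int_M a|\nabla u|^2\,dv_g$: it has no sign (since $a$ is not assumed signed) and must be absorbed without spoiling the lower bound, which is exactly why the factor $\mu\|a\|_\infty$ (with $\mu=\inf_A\|\nabla u\|_2^2/\|u\|_2^2$) appears in the statement rather than something cleaner. Getting the bookkeeping of the constants $K_2^2$, $\epsilon$, $A_2(\epsilon)$, $\mu$, $\|a\|_\infty$, $\|h\|_\infty$ to line up with the precise inequality in the Lemma — in particular making sure the interpolation/Sobolev constants are used in the same normalization — is the part that needs the most care; the rest is the standard Hölder-plus-Sobolev lower bound on $\|\Delta u\|_2^2$ over a small-measure support set.
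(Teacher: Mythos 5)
Your argument is correct and its core is exactly the paper's: since $\int_M f^-|u|\,dv_g=0$ forces $u$ to vanish a.e.\ off $\Omega=\{f\ge 0\}$, H\"older on $\Omega$ combined with the sharp Sobolev inequality of Lemma \ref{lem1} gives $\omega^{-4/n}\le K_2^2(1+\epsilon)\,\|\Delta u\|_2^2/\|u\|_2^2+A_2(\epsilon)$, and this blows up the Rayleigh quotient as $\omega\to 0$. The one place you diverge is the absorption of the term $\int_M a|\nabla u|^2\,dv_g$: you propose the interpolation inequality (\ref{9}), $\|\nabla u\|_2^2\le 2\eta\|\Delta u\|_2^2+2C(\eta)\|u\|_2^2$, whereas the paper bounds $\int_M a|\nabla u|^2\,dv_g/\|u\|_2^2$ by $\mu\|a\|_\infty$ with $\mu=\inf_{A}\|\nabla u\|_2^2/\|u\|_2^2$, arriving at $\lambda_{a,f}\ge \frac{1}{K_2^2(1+\epsilon)}\bigl(\omega^{-4/n}-A_2(\epsilon)-\mu\|a\|_\infty\bigr)$, from which the ``in particular'' threshold is read off directly. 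Your route is arguably the more defensible one for part (i) — as you yourself notice, $\mu$ being an \emph{infimum} only gives a lower bound on $\|\nabla u\|_2^2/\|u\|_2^2$ for a general $u\in A$, which is the wrong direction for controlling $|\int_M a|\nabla u|^2|$ from above, so the paper's use of $\mu$ is only valid along (near-)minimizers of the gradient quotient — but it produces a threshold involving $\eta$ and $C(\eta)$ rather than the literal constant $K_2^2(1+\epsilon)\|h\|_\infty+A_2(\epsilon)+\mu\|a\|_\infty$ of the statement, so part (ii) as stated is not quite what your bookkeeping would deliver. (Note also that the stated condition should really be on $\omega^{-4/n}$, i.e.\ $\omega<\bigl(K_2^2(1+\epsilon)\|h\|_\infty+A_2(\epsilon)+\mu\|a\|_\infty\bigr)^{-n/4}$; that discrepancy is in the paper, not in your argument.)
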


\begin{proof}
For any $u\in A$, we obtain by applying successively the H\"{o}lder
inequality and the Sobolev one given by Lemma \ref{lem1}, 
\begin{equation*}
\int_{\left\{ x\in M\ :\ f(x)\geqslant 0\right\} }u^{2}dv_{g}\leq \left(
\int_{\left\{ x\in M\ :\ f(x)\geqslant 0\right\} }\left\vert u\right\vert
^{N}dv_{g}\right) ^{\frac{2}{N}}\left( \int_{\left\{ x\in M\ :\
f(x)\geqslant 0\right\} }dv_{g}\right) ^{1-\frac{2}{N}}
\end{equation*}%
\begin{equation*}
=\left( \int_{M}\left\vert u\right\vert ^{N}dv_{g}\right) ^{\frac{2}{N}%
}\left( \int_{\left\{ x\in M\ :\ f(x)\geqslant 0\right\} }dv_{g}\right) ^{%
\frac{4}{n}}
\end{equation*}%
\begin{equation*}
\leq \left( K_{2}^{2}\left( 1+\epsilon \right) \left\Vert \Delta
u\right\Vert _{2}^{2}+A_{2}\left( \epsilon \right) \left\Vert u\right\Vert
_{2}^{2}\right) \left( \int_{\left\{ x\in M\ :\ f(x)\geqslant 0\right\}
}dv_{g}\right) ^{\frac{4}{n}}\text{.}
\end{equation*}%
So 
\begin{equation*}
\left( \int_{\left\{ x\in M\ :\ f(x)\geqslant 0\right\} }dv_{g}\right) ^{-%
\frac{4}{n}}\leq K_{2}^{2}\left( 1+\epsilon \right) \lambda
_{a,f}+A_{2}\left( \epsilon \right) +\inf_{x\in A}\frac{\int_{M}a\left(
x\right) \left\vert \nabla u\right\vert ^{2}dv_{g}}{\left\Vert u\right\Vert
_{2}^{2}}
\end{equation*}%
and letting $\mu =\inf_{x\in A}\frac{\int_{M}\left\vert \nabla u\right\vert
^{2}dv_{g}}{\left\Vert u\right\Vert _{2}^{2}}$ , we get that%
\begin{equation*}
\lambda _{a,f}\geqslant \frac{1}{K_{2}^{2}\left( 1+\epsilon \right) }\left(
\left( \int_{\left\{ x\in M\ :\ f(x)\geqslant 0\right\} }dv_{g}\right) ^{-%
\frac{4}{n}}-A_{2}\left( \epsilon \right) -\mu \left\Vert a\right\Vert
_{\infty }\right)
\end{equation*}%
where $\left\Vert a\right\Vert _{\infty }=\sup_{x\in M}\left\vert
a(x)\right\vert $.

Hence if $\int_{\left\{ x\in M\ :\ f(x)\geqslant 0\right\} }dv_{g}$ tends to 
$0$ as \ a function of the variable $f$ , $\lambda _{a,f}$ goes to $+\infty $%
.
\end{proof}

Denote also by $\left\Vert h\right\Vert _{\infty }=\sup_{x\in M}\left\vert
h(x)\right\vert $ the supremum norm.

As in \cite{11}, we define the quantities,%
\begin{equation*}
\lambda _{a,f,\eta ,q}=\inf_{u\in A\left( \eta ,q\right) }\frac{\left\Vert
\Delta u\right\Vert _{2}^{2}-\int_{M}a\left\vert \nabla u\right\vert ^{2}dvg%
}{\left\Vert u\right\Vert _{2}^{2}}
\end{equation*}%
with%
\begin{equation*}
A\left( \eta ,q\right) =\left\{ u\in H_{2}:\text{\ }\left\Vert u\right\Vert
_{q}=1,\text{ }\int_{M}f^{-}\left\vert u\right\vert ^{q}dv_{g}=\eta
\int_{M}f^{-}dv_{g}\right\}
\end{equation*}%
for a real $\eta >0,$ \ 

and 
\begin{equation*}
\lambda _{a,f,\eta ,q}^{\prime }=\inf_{u\in A^{\prime }\left( \eta ,q\right)
}\frac{\left\Vert \Delta u\right\Vert _{2}^{2}-\int_{M}a\left\vert \nabla
u\right\vert ^{2}dvg}{\left\Vert u\right\Vert _{2}^{2}}
\end{equation*}%
where 
\begin{equation*}
A^{\prime }\left( \eta ,q\right) =\left\{ u\in H_{2}:~\left\Vert
u\right\Vert _{q}^{q}=1\text{,\ }\int_{M}f^{-}\left\vert u\right\vert
^{q}dv_{g}\leq \eta \int_{M}f^{-}dv_{g}\right\} .
\end{equation*}%
Now, we will study $\lambda _{a,f,\eta ,q}$ , to do so, we distinguish ( as
it is done in \cite{11}) the case where the set $\left\{ x\in M:f(x)\geq
0\right\} $ is of positive measure with respect to Riemannian measure and
the case where the set is negligible and $\sup_{x\in M}f=0$.

\textit{Case }$f^{+}>0$.

\begin{claim}
For any real $\eta >0$, the set $A(\eta ,q)$ is not empty .
\end{claim}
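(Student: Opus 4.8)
The plan is to exhibit suitable test functions and pass between them by the intermediate value theorem. Since $f^{+}>0$, the open set $\{x\in M:f(x)>0\}$ is nonempty; fix a point $x_{0}$ in it and $\delta>0$ with $\overline{B(x_{0},2\delta)}\subset\{f>0\}$, and choose $\chi\in C^{\infty}(M)$ with $0\le\chi\le1$, $\chi\equiv1$ on $B(x_{0},\delta)$ and $\operatorname{supp}\chi\subset B(x_{0},2\delta)$, so that $f^{-}\equiv0$ on $\operatorname{supp}\chi$. For $s\in[0,1]$ set $v_{s}=(1-s)+s\chi$. Recalling $\operatorname{vol}(M)=1$, each $v_{s}$ is a smooth nonnegative function with $v_{s}\ge1-s$; for $s<1$ this gives $\|v_{s}\|_{q}\ge1-s>0$, while $\|v_{1}\|_{q}=\|\chi\|_{q}>0$ because $\chi\equiv1$ on $B(x_{0},\delta)$. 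Hence $u_{s}:=v_{s}/\|v_{s}\|_{q}\in H_{2}$ is well defined with $\|u_{s}\|_{q}=1$.

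Next I would look at the scalar function
\begin{equation*}
\Psi(s)=\frac{1}{\int_{M}f^{-}dv_{g}}\int_{M}f^{-}|u_{s}|^{q}\,dv_{g}
=\frac{\int_{M}f^{-}v_{s}^{q}\,dv_{g}}{\big(\int_{M}f^{-}dv_{g}\big)\big(\int_{M}v_{s}^{q}\,dv_{g}\big)}.
\end{equation*}
Since $0\le v_{s}\le1$ and $(s,x)\mapsto v_{s}(x)$ is jointly continuous, dominated convergence shows $\Psi$ is continuous on $[0,1]$; moreover $v_{0}\equiv1$ gives $\Psi(0)=1$, and $v_{1}=\chi$ is supported where $f^{-}$ vanishes, so $\Psi(1)=0$. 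Thus $\Psi([0,1])\supseteq[0,1]$, and for every $\eta\in(0,1]$ there is $s$ with $\int_{M}f^{-}|u_{s}|^{q}dv_{g}=\eta\int_{M}f^{-}dv_{g}$, i.e. $u_{s}\in A(\eta,q)\neq\emptyset$.

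For $\eta>1$ the same idea works by concentrating the test function where $f^{-}$ is large. Put $m=\max_{M}f^{-}$; since $f^{-}\le m$ on $M$ and $f^{-}\equiv0$ on $\{f>0\}$, a set of positive measure strictly smaller than $1$, one has $m>\int_{M}f^{-}dv_{g}$. Given $\eta$ with $1<\eta<m/\int_{M}f^{-}dv_{g}$, choose $y_{0}$ with $f^{-}(y_{0})>\eta\int_{M}f^{-}dv_{g}$ and, for $R$ large, a bump $\zeta_{R}$ supported in $B(y_{0},1/R)$; by continuity of $f^{-}$ the ratio $\int_{M}f^{-}\zeta_{R}^{q}\,dv_{g}/\int_{M}\zeta_{R}^{q}\,dv_{g}$ exceeds $\eta\int_{M}f^{-}dv_{g}$ for $R$ large, so interpolating linearly from $v\equiv1$ to $\zeta_{R}$, normalising in $L^{q}$, and applying the intermediate value theorem to the resulting continuous scalar quantity (which starts at $1$ and ends above $\eta$) yields an element of $A(\eta,q)$. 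Since $\int_{M}f^{-}|u|^{q}dv_{g}\le m$ whenever $\|u\|_{q}=1$, the interval $\big(0,m/\int_{M}f^{-}dv_{g}\big)$ is in fact the precise range of admissible $\eta$; in particular it contains the small values of $\eta$ used later. The only steps needing attention are the continuity of $s\mapsto\Psi(s)$ (immediate, as all integrands are uniformly bounded) and the choice of test functions making the endpoint values straddle the target $\eta$; the rest is routine bookkeeping.
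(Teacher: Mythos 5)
Your construction is correct, and it is a genuinely different route from the paper's. The paper does not exhibit elements of $A(\eta,q)$ directly: it first observes that the relaxed set $A^{\prime }(\eta ,q)$ (with $\int_{M}f^{-}\left\vert u\right\vert ^{q}dv_{g}\leq \eta \int_{M}f^{-}dv_{g}$) is nonempty, because it contains functions of unit $L^{q}$-norm supported where $f^{-}$ is small, and then invokes the variational argument of Rauzy to assert that the minimizer of $\lambda _{a,f,\eta ,q}^{\prime }$ over $A^{\prime }(\eta ,q)$ saturates the constraint and hence lies in $A(\eta ,q)$. Your argument bypasses the minimization entirely: you connect $u\equiv 1$ (for which the normalized ratio $\Psi$ equals $1$) to a bump supported in $\{f>0\}$ (for which $\Psi =0$) by an explicit path in the unit sphere of $L^{q}$ and apply the intermediate value theorem; this is elementary, self-contained, and does not depend on the existence or properties of minimizers. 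The one point to check carefully --- that the normalizing denominator $\int_{M}v_{s}^{q}dv_{g}$ stays bounded away from zero along the path --- you handle correctly via $v_{s}\equiv 1$ on $B(x_{0},\delta )$. Your approach also buys something the paper's does not: it makes transparent that the claim as literally stated is false for large $\eta$, since $\int_{M}f^{-}\left\vert u\right\vert ^{q}dv_{g}\leq \left\Vert f^{-}\right\Vert _{\infty }$ whenever $\left\Vert u\right\Vert _{q}=1$, so $A(\eta ,q)=\emptyset$ once $\eta >\left\Vert f^{-}\right\Vert _{\infty }/\int_{M}f^{-}dv_{g}$; this is harmless for the paper, which only uses small $\eta$, but it is a real (if minor) correction to the statement, and your explicit identification of the admissible range is the right way to record it (modulo the edge case of whether the endpoint is attained when $\{f^{-}=\left\Vert f^{-}\right\Vert _{\infty }\}$ supports an $H_{2}$ function).
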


Indeed, the set $A^{\prime }(\eta ,q)$ is not empty since it includes the
set of functions $u\in H_{2}$ such that $\left\Vert u\right\Vert _{q}=1$ and
with supports in the set $\left\{ x\in M:f^{-}(x)<\eta
\int_{M}f^{-}dv_{g}\right\} $. The same arguments as in \cite{11} show that $%
\lambda _{a,f,\eta ,q}^{\prime }$ is achieved by a function $v\in A^{\prime
}(\eta ,q)$ and moreover $v$ satisfies $\int_{M}f^{-}\left\vert u\right\vert
^{q}dv_{g}=\eta \int_{M}f^{-}dv_{g}$.

The following facts which are proved in \cite{11}, for the Laplacian
operator remain valid in the case of the bi-Laplacian operator: $\lambda
_{a,f,\eta ,q}^{\prime }$\textit{\ is a decreasing function with respect to }%
$\eta $\textit{, bounded by }$\lambda _{a,f}$\textit{\ \ and }$\lambda
_{a,f,\eta ,q}=\lambda _{a,f,\eta ,q}^{\prime }$\textit{, }so $\lambda
_{a,f,\eta ,q}$\ is also a decreasing function with respect to $\eta ,$\ and
bounded by $\lambda _{a,f}$ .

\begin{lemma}
\label{lem2} For any $q\in \left] 2,N\right[ ,$ $\lambda _{a,f,\eta ,q}$
goes to $\lambda _{a,f}$ whenever $\eta $ goes to zero.
\end{lemma}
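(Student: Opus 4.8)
The plan is to combine the monotonicity of $\eta\mapsto\lambda_{a,f,\eta,q}$ already recorded above with a direct-method (compactness) argument. Since that map is nonincreasing and bounded above by $\lambda_{a,f}$, and is bounded below by the coercivity estimate used in the proof of Proposition~\ref{prop1}, the limit $\ell:=\lim_{\eta\to 0}\lambda_{a,f,\eta,q}$ exists and satisfies $\ell\le\lambda_{a,f}$. Everything then reduces to proving $\ell\ge\lambda_{a,f}$. To this end I would fix a sequence $\eta_k\downarrow 0$ and, for each $k$, choose $u_k\in A(\eta_k,q)$ with
\begin{equation*}
\frac{\left\Vert \Delta u_k\right\Vert _{2}^{2}-\int_{M}a\left\vert \nabla u_k\right\vert ^{2}dv_{g}}{\left\Vert u_k\right\Vert _{2}^{2}}\le \lambda_{a,f,\eta_k,q}+\frac{1}{k};
\end{equation*}
one may in fact take $u_k$ to be the minimiser noted above, but a near-minimiser is all that is used.

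The next step is a uniform $H_2$-bound on $(u_k)$. Since $\left\Vert u_k\right\Vert _q=1$ and $\mathrm{Vol}(M,g)=1$, H\"older's inequality gives $\left\Vert u_k\right\Vert _2\le 1$. Plugging $u_k$ into the coercivity estimate behind (\ref{9})--(\ref{10}), namely $\left\Vert \Delta u\right\Vert _{2}^{2}-\int_{M}a\left\vert \nabla u\right\vert ^{2}dv_{g}\ge \alpha\left\Vert \Delta u\right\Vert _{2}^{2}-C_2\left\Vert u\right\Vert _{2}^{2}$ with $\alpha>0$ and $C_2\ge 0$ depending only on $a$, and using $\lambda_{a,f,\eta_k,q}\le\lambda_{a,f}$, gives $\alpha\left\Vert \Delta u_k\right\Vert _{2}^{2}\le (\lambda_{a,f}+1+C_2)\left\Vert u_k\right\Vert _{2}^{2}\le \lambda_{a,f}+1+C_2$, so $\left\Vert \Delta u_k\right\Vert _2$ is bounded; then (\ref{9}) bounds $\left\Vert \nabla u_k\right\Vert _2$ as well. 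Passing to a subsequence exactly as in the proof of Proposition~\ref{prop1}, I would get $u_k\rightharpoonup u$ in $H_2$ with $u_k\to u$ strongly in $L^q$ and in $L^2$ (using $q<N$ for the compact embedding) and $\nabla u_k\to\nabla u$ strongly in $L^2$.

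Now I would check that $u$ is an admissible competitor for $\lambda_{a,f}$. Strong $L^q$ convergence gives $\left\Vert u\right\Vert _q=1$, hence $u\not\equiv 0$, and also $\left\vert u_k\right\vert ^q\to\left\vert u\right\vert ^q$ in $L^1$; since $f^-\in L^\infty$,
\begin{equation*}
\int_{M}f^{-}\left\vert u\right\vert ^{q}dv_{g}=\lim_{k}\int_{M}f^{-}\left\vert u_k\right\vert ^{q}dv_{g}=\lim_{k}\eta_k\int_{M}f^{-}dv_{g}=0,
\end{equation*}
and because $f^-\ge 0$ this forces $\int_{M}f^{-}\left\vert u\right\vert dv_{g}=0$, i.e.\ $u\in A$. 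Therefore $\lambda_{a,f}\le \big(\left\Vert \Delta u\right\Vert _{2}^{2}-\int_{M}a\left\vert \nabla u\right\vert ^{2}dv_{g}\big)/\left\Vert u\right\Vert _{2}^{2}$. On the other hand, by weak lower semicontinuity of $u\mapsto\left\Vert \Delta u\right\Vert _2^2$ together with $\int_{M}a\left\vert \nabla u_k\right\vert ^2dv_g\to\int_{M}a\left\vert \nabla u\right\vert ^2dv_g$ and $\left\Vert u_k\right\Vert _2\to\left\Vert u\right\Vert _2>0$,
\begin{equation*}
\frac{\left\Vert \Delta u\right\Vert _{2}^{2}-\int_{M}a\left\vert \nabla u\right\vert ^{2}dv_{g}}{\left\Vert u\right\Vert _{2}^{2}}\le \liminf_{k}\frac{\left\Vert \Delta u_k\right\Vert _{2}^{2}-\int_{M}a\left\vert \nabla u_k\right\vert ^{2}dv_{g}}{\left\Vert u_k\right\Vert _{2}^{2}}\le \ell.
\end{equation*}
Combining, $\lambda_{a,f}\le\ell$, so $\ell=\lambda_{a,f}$, which is the claim.

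The step I expect to be the crux is the uniform $H_2$-bound together with the non-triviality of the weak limit $u$ — both rely on the subcriticality $q<N$, which makes $H_2\hookrightarrow L^q$ compact and so lets the normalisation $\left\Vert u_k\right\Vert _q=1$ survive in the limit (in the critical case $q=N$ this can fail through concentration). A secondary technical point is that the constraints cutting out the sets $A(\eta_k,q)$ close up correctly in the limit, i.e.\ $\int_M f^-\left\vert u\right\vert ^q dv_g=0$, from which $f^-\ge 0$ yields $u\in A$.
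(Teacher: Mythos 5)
Your argument is correct and follows essentially the same route as the paper: a uniform $H_{2}$-bound on the (near-)minimisers via the coercivity estimate (\ref{9}), compactness of $H_{2}\hookrightarrow L^{q}$ for $q<N$ to keep the normalisation $\left\Vert u\right\Vert _{q}=1$ and to pass the constraint $\int_{M}f^{-}\left\vert u\right\vert ^{q}dv_{g}=\eta\int_{M}f^{-}dv_{g}$ to zero in the limit, and weak lower semicontinuity of the Rayleigh quotient to conclude $\lambda_{a,f}\le\lim_{\eta\to 0}\lambda_{a,f,\eta,q}$. The only cosmetic differences are that you work with near-minimisers and organise the conclusion through the monotone limit $\ell$, whereas the paper uses the exact minimisers $v_{\eta,q}$ directly.
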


\begin{proof}
$\lambda _{a,f,\eta ,q}$ is attained by a family of functions labelled $%
v_{\eta ,q}$. The functions $v_{\eta ,q}$ indexed by $\eta $ are bounded in $%
H_{2}^{2}$: since 
\begin{equation*}
\left\Vert v_{\eta ,q}\right\Vert _{2}^{2}\leq \left\Vert v_{\eta
,q}\right\Vert _{q}^{2}Vol(M)^{1-\frac{2}{q}}=1
\end{equation*}%
and%
\begin{equation*}
\left\Vert \Delta v_{\eta ,q}\right\Vert _{2}^{2}-\left\Vert
a_{+}\right\Vert _{\infty }\left\Vert \nabla v_{\eta ,q}\right\Vert
_{2}^{2}\leq \lambda _{a,f,\eta ,q}\left\Vert v_{\eta ,q}\right\Vert _{2}^{2}
\end{equation*}%
\begin{equation*}
\leq \lambda _{a,f}\left\Vert v_{\eta ,q}\right\Vert _{2}^{2}\leq \lambda
_{a,f}\text{.}
\end{equation*}%
By formula (\ref{9}), for a well chosen $\varepsilon >0$, there is a
constant $C(\varepsilon )>0$ such that 
\begin{equation*}
\left\Vert \nabla v_{\eta ,q}\right\Vert _{2}^{2}\leq 2\varepsilon
\left\Vert \Delta v_{\eta ,q}\right\Vert _{2}^{2}+2C(\varepsilon )\left\Vert
v_{\eta ,q}\right\Vert _{2}^{2}
\end{equation*}%
so 
\begin{equation*}
\left\Vert \Delta v_{q,\eta }\right\Vert _{2}^{2}\leq \lambda
_{a,f}+\left\Vert a_{+}\right\Vert _{\infty }\left\Vert \nabla
v_{n,q}\right\Vert _{2}^{2}
\end{equation*}%
\begin{equation*}
\leq \lambda _{a,f}+2\left\Vert a_{+}\right\Vert _{\infty }\left(
\varepsilon \left\Vert \Delta v_{\eta ,q}\right\Vert _{2}^{2}+C(\varepsilon
)\left\Vert v_{\eta ,q}\right\Vert _{2}^{2}\right)
\end{equation*}%
and 
\begin{equation*}
\left\Vert \Delta v_{q,\eta }\right\Vert _{2}^{2}\left( 1-2\varepsilon
\left\Vert a_{+}\right\Vert _{\infty }\right) \leq \lambda
_{a,f}+2\left\Vert a_{+}\right\Vert _{\infty }C(\varepsilon )\text{.}
\end{equation*}%
By choosing $\varepsilon >0$ small enough such that 
\begin{equation*}
1-2\varepsilon \left\Vert a_{+}\right\Vert _{\infty }>0
\end{equation*}%
we get that 
\begin{equation*}
\left\Vert \Delta v_{q,\eta }\right\Vert _{2}^{2}\leq C^{\prime }(\lambda
_{a,f},\left\Vert a_{+}\right\Vert _{\infty },\varepsilon )
\end{equation*}%
where $C^{\prime }(\lambda _{a,f},\left\Vert a_{+}\right\Vert _{\infty
},\varepsilon )$ is a constant depending of $\lambda _{a,f},\left\Vert
a_{+}\right\Vert _{\infty },\varepsilon $.%
\begin{equation*}
\left\Vert \nabla v_{q,\eta }\right\Vert _{2}^{2}\leq 2\varepsilon C(\lambda
_{a,f},\left\Vert a_{+}\right\Vert _{\infty },\varepsilon )+2C(\varepsilon
)\leq C^{\prime }(\lambda _{a,f},\left\Vert a_{+}\right\Vert _{\infty
},\varepsilon )\text{.}
\end{equation*}%
Consequently the sequence $(v_{q,\eta })_{\eta }$ is bounded in $H_{2}$ and
we have

\begin{equation*}
v_{q\eta }\longrightarrow v_{q}\text{ weakly in }H_{2}\text{.}
\end{equation*}%
\begin{equation*}
v_{q\eta }\longrightarrow v_{q}\text{ strongly in }H_{r}^{2}\text{, \ \ \ \ }%
r=0,1
\end{equation*}%
\begin{equation*}
v_{q\eta }\longrightarrow v_{q}\text{ strongly in }L^{q}
\end{equation*}%
and 
\begin{equation*}
\left\Vert \Delta v_{q}\right\Vert _{2}^{2}\leq \lim_{\eta \longrightarrow
0}\inf \left\Vert \Delta v_{q\eta }\right\Vert _{2}^{2}
\end{equation*}%
Also%
\begin{equation*}
\left\Vert v_{q}\right\Vert _{q}=1.
\end{equation*}%
On the other hand 
\begin{equation*}
\int_{M}f^{-}\left\vert v_{q\eta }\right\vert ^{q}dv_{g}=\eta
\int_{M}f^{-}dv_{g}
\end{equation*}%
so 
\begin{equation*}
\int_{M}f^{-}\left\vert v_{q}\right\vert ^{q}dv_{g}=0.
\end{equation*}%
Hence 
\begin{equation*}
v_{q}\in A
\end{equation*}%
and 
\begin{equation*}
\left\Vert v_{q}\right\Vert _{2}^{2}\lambda _{a,f}\leq \left\Vert \Delta
v_{q}\right\Vert _{2}^{2}-\int_{M}a\left\vert \nabla v_{q}\right\vert
^{2}dv_{g}
\end{equation*}%
\begin{equation*}
\leq \lim_{\eta \longrightarrow 0}\inf \left( \left\Vert \Delta v_{q\eta
}\right\Vert _{2}^{2}-\int_{M}a\left\vert \nabla v_{q\eta }\right\vert
^{2}dv_{g}\right) =\lim_{\eta \longrightarrow 0}\inf \left\Vert v_{q\eta
}\right\Vert _{2}^{2}\left( \lambda _{a,f,q,\eta }\right)
\end{equation*}%
and since by construction 
\begin{equation*}
\lambda _{a,f}\geq \lambda _{a,f,q,\eta }
\end{equation*}%
we get that 
\begin{equation*}
\lim_{\eta \longrightarrow 0}\lambda _{a,f,q,\eta }=\lambda _{a,f}\text{.}
\end{equation*}
\end{proof}

\begin{lemma}
\label{lem3} Let $\varepsilon >0$, there exists $\ \eta _{o}$ such that for
any $\ \eta <\eta _{o},$ there is $q_{\eta }$ such that $\lambda
_{a,f,q,\eta }\geq \lambda _{a,f}-\varepsilon $ for any $q>q_{\eta }.$
\end{lemma}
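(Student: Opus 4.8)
The plan is to argue by contradiction using a diagonal-type extraction, combining Lemma \ref{lem2} (which says $\lambda_{a,f,q,\eta}\to\lambda_{a,f}$ as $\eta\to 0$ for each fixed $q$) with a uniform control in $q$ near the critical exponent $N$. Suppose the conclusion fails. Then there is an $\varepsilon_0>0$ such that for every $\eta_0>0$ one can find $\eta<\eta_0$ with the property that the set of exponents $q$ for which $\lambda_{a,f,q,\eta}<\lambda_{a,f}-\varepsilon_0$ is cofinal, i.e. contains arbitrarily large $q<N$. Choosing $\eta_0=1/m$ produces sequences $\eta_m\to 0$ and $q_m\to N$ (or at least $q_m$ bounded away from $2$) with $\lambda_{a,f,q_m,\eta_m}<\lambda_{a,f}-\varepsilon_0$. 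For each $m$ let $v_m:=v_{q_m,\eta_m}\in A(\eta_m,q_m)$ be a minimizer realizing $\lambda_{a,f,q_m,\eta_m}$, which exists by the discussion preceding Lemma \ref{lem2}.

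The key step is to show $(v_m)$ is bounded in $H_2$ uniformly in $m$. This mirrors the estimate in the proof of Lemma \ref{lem2}: from $\|v_m\|_{q_m}=1$ and $\mathrm{Vol}(M)=1$ one gets $\|v_m\|_2^2\le \|v_m\|_{q_m}^2=1$ by Hölder (for $q_m\ge 2$), hence
\begin{equation*}
\|\Delta v_m\|_2^2-\|a_+\|_\infty\|\nabla v_m\|_2^2\le \lambda_{a,f,q_m,\eta_m}\|v_m\|_2^2\le \lambda_{a,f},
\end{equation*}
and then inequality (\ref{9}) with $\varepsilon$ chosen so that $1-2\varepsilon\|a_+\|_\infty>0$ bounds $\|\Delta v_m\|_2^2$ and $\|\nabla v_m\|_2^2$ by a constant depending only on $\lambda_{a,f}$, $\|a_+\|_\infty$, $\varepsilon$ — crucially independent of $m$. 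So up to a subsequence $v_m\rightharpoonup v$ weakly in $H_2$, strongly in $H_1^2$ and (since $q_m<N$) strongly in $L^{q_0}$ for every fixed $q_0<N$; in particular, after passing to a further subsequence, $v_m\to v$ strongly in $L^{q_*}$ where $q_*=\lim q_m\le N$. One also has $\|\Delta v\|_2^2\le\liminf\|\Delta v_m\|_2^2$ and $\int_M a|\nabla v_m|^2\to\int_M a|\nabla v|^2$.

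It remains to identify the limit and reach a contradiction. From strong $L^{q_m}$-type convergence and $\|v_m\|_{q_m}=1$ one concludes $\|v\|_{q_*}=1$ (so $v\not\equiv 0$); and from $\int_M f^-|v_m|^{q_m}\,dv_g=\eta_m\int_M f^-\,dv_g\to 0$ together with the convergence of $|v_m|^{q_m}$ in $L^1$, one gets $\int_M f^-|v|^{q_*}\,dv_g=0$, i.e. $v\in A$. Therefore, by lower semicontinuity of $\|\Delta\cdot\|_2^2$, convergence of the $a$-term, and $\|v\|_2^2\le\liminf\|v_m\|_2^2$ handled as in Lemma \ref{lem2},
\begin{equation*}
\lambda_{a,f}\|v\|_2^2\le \|\Delta v\|_2^2-\int_M a|\nabla v|^2\,dv_g\le\liminf_m\Big(\|\Delta v_m\|_2^2-\int_M a|\nabla v_m|^2\,dv_g\Big)=\liminf_m \lambda_{a,f,q_m,\eta_m}\|v_m\|_2^2.
\end{equation*}
Since $\lambda_{a,f,q_m,\eta_m}<\lambda_{a,f}-\varepsilon_0$, the right side is $\le(\lambda_{a,f}-\varepsilon_0)\liminf_m\|v_m\|_2^2$; comparing with the left side and using that $\|v_m\|_2^2\to\|v\|_2^2$ with $\|v\|_2^2>0$ (as $v\not\equiv0$ on a set of positive measure, since $\|v\|_{q_*}=1$) forces $\lambda_{a,f}\le\lambda_{a,f}-\varepsilon_0$, a contradiction. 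Hence the desired $\eta_0$ and, for each $\eta<\eta_0$, the threshold $q_\eta$ exist.

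The main obstacle I anticipate is the uniformity of the extraction as $q_m\to N$: the embedding $H_2\hookrightarrow L^{q}$ degenerates at $q=N$, so one must be careful that the strong-convergence statements used to pass $\int_M f^-|v_m|^{q_m}\to\int_M f^-|v|^{q_*}$ and $\|v_m\|_{q_m}\to\|v\|_{q_*}$ still hold — this is fine because $q_m<N$ strictly and compactness is needed only in $L^{q_0}$ for $q_0<N$, but the bookkeeping (e.g. handling $|v_m|^{q_m}$ with a varying exponent, via an intermediate fixed exponent $q_0\in(q_*-\delta,N)$ and a uniform $L^{N}$-bound from the Sobolev inequality of Lemma \ref{lem1}) is the delicate point. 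One also needs the quantity $\mathrm{sign}$-free assertion that a minimizer $v_m$ exists and lies on the constraint surface $\int_M f^-|v_m|^{q_m}=\eta_m\int_M f^-$, which was already asserted (following \cite{11}) in the paragraph preceding Lemma \ref{lem2}.
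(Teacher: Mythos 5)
Your overall architecture --- argue by contradiction, extract minimizers $v_m=v_{q_m,\eta_m}$, get a uniform $H_2$ bound from the Rayleigh--quotient inequality together with (\ref{9}), pass to a weak limit, use lower semicontinuity and Fatou to place the limit in $A$, and compare with $\lambda_{a,f}$ --- is the same as the paper's. The only structural difference is that you take a single diagonal limit $(\eta_m,q_m)\rightarrow (0,N)$, whereas the paper first sends $q\rightarrow N$ at fixed $\eta$ (producing $v_\eta$) and then sends $\eta\rightarrow 0$; that difference is cosmetic.

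There is, however, a genuine gap at the step where you conclude $v\not\equiv 0$. You deduce it from ``$\Vert v\Vert_{q_*}=1$'', which you justify by ``strong convergence in $L^{q_*}$''. But the negation of the lemma only guarantees bad exponents $q$ arbitrarily close to $N$, so $q_*=N$ is the unavoidable case, and the embedding $H_2\hookrightarrow L^{N}$ is not compact: a bounded sequence in $H_2$ with $\Vert v_m\Vert_{q_m}=1$ and $q_m\rightarrow N$ can concentrate and converge weakly (and strongly in every $L^{q_0}$ with $q_0<N$) to $0$. The interpolation device you sketch at the end does not repair this, because the interpolation exponent between a fixed $L^{q_0}$ and the uniform $L^{N}$ bound degenerates as $q_m\rightarrow N$. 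What actually rules out concentration --- and what the paper uses --- is Lemma \ref{lem1} combined with the Rayleigh--quotient bound: from $\Vert \Delta v_m\Vert_2^2-\int_M a\vert\nabla v_m\vert^2 dv_g\leq \lambda_{a,f}\Vert v_m\Vert_2^2$ and (\ref{9}) one gets $\Vert \Delta v_m\Vert_2^2\leq C\Vert v_m\Vert_2^2$, whence
\begin{equation*}
1=\Vert v_m\Vert_{q_m}^2\leq \Vert v_m\Vert_{N}^2\leq K_2^2\left(1+\varepsilon_1\right)\Vert \Delta v_m\Vert_2^2+A(\varepsilon_1)\Vert v_m\Vert_2^2\leq C''\Vert v_m\Vert_2^2,
\end{equation*}
a uniform positive lower bound on $\Vert v_m\Vert_2^2$ that passes to $v$ by the strong $L^2$ convergence. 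With this substitution your argument closes; without it you cannot divide by $\Vert v\Vert_2^2$ to reach $\lambda_{a,f}\leq\lambda_{a,f}-\varepsilon_0$.
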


\begin{proof}
We proceed by contradiction. Suppose that there is a $\varepsilon _{o}>0$,
such that for any $\eta $ there exists an $\eta _{o}<\eta $ \ and for any $%
q_{\eta o}$ there is $q>q_{\eta o}$ with \ $\lambda _{a,f,q,\eta }<\lambda
_{f}-\varepsilon .$ If $v_{q\eta }$ is the function in $H_{2}$ which
achieves $\lambda _{a,f,q,\eta }$ , then 
\begin{equation*}
\lambda _{a,f,q,\eta }=\frac{\left\Vert \Delta v_{q\eta }\right\Vert
_{2}^{2}-\int_{M}a\left\vert \nabla v_{q\eta }\right\vert ^{2}dv_{g}}{%
\left\Vert v_{q\eta }\right\Vert _{2}^{2}}
\end{equation*}%
with $\left\Vert v_{q\eta }\right\Vert _{q}^{q}=1$. For a convenient $\eta $%
, we choose a sequence $q$ converging to $N$ such that 
\begin{equation*}
\left\Vert \Delta v_{q\eta }\right\Vert _{2}^{2}-\int_{M}a\left\vert \nabla
v_{q\eta }\right\vert ^{2}dv_{g}<\lambda _{a,f}-\varepsilon _{o}\text{.}
\end{equation*}%
By the same argument as in the proof of Lemma \ref{lem2}, we get that the
sequence $v_{q\eta }$ indexed by $q$ is bounded in $H_{2}$ so up to a
subsequence $v_{q\eta }$ converges weakly to $v_{\eta }$ in $H_{2}$ and
strongly in $H_{r}^{2}$ , $r=0,1$. Also we have%
\begin{equation*}
\left\Vert \Delta v_{\eta }\right\Vert _{2}^{2}\leq \lim_{q\longrightarrow
N}\inf \left\Vert \Delta v_{q\eta }\right\Vert _{2}^{2}
\end{equation*}%
and by the strong convergence \ in $H_{r}^{2}$, $r=0,1$, we get 
\begin{equation*}
\left\Vert \Delta v_{\eta }\right\Vert _{2}^{2}-\int_{M}a\left\vert \nabla
v_{\eta }\right\vert ^{2}dv_{g}<\left( \lambda _{a,f}-\varepsilon
_{o}\right) \left\Vert v_{\eta }\right\Vert _{2}^{2}\text{.}
\end{equation*}

By the Sobolev inequality given in the Lemma \ref{lem1} we have for any $%
\varepsilon _{1}>0$ there is a constant $A(\varepsilon _{1})>0$ such that 
\begin{equation*}
1=\left\Vert v_{q\eta }\right\Vert _{q}^{2}\leq \left\Vert v_{q\eta
}\right\Vert _{N}^{2}\text{ \ \ ( since the manifold }M\text{ is of volume }1%
\text{ )}
\end{equation*}%
\begin{equation*}
\leq K_{2}^{2}\left( 1+\varepsilon _{1}\right) \left\Vert \Delta v_{q\eta
}\right\Vert _{2}^{2}+A(\varepsilon _{1})\left\Vert v_{q\eta }\right\Vert
_{2}^{2}
\end{equation*}%
\begin{equation*}
\leq \left[ K_{2}^{2}\left( 1+\varepsilon _{1}\right) \lambda
_{a,f}+A(\varepsilon _{1})\right] \left\Vert v_{q\eta }\right\Vert
_{2}^{2}++(K_{2}^{2}+\varepsilon _{1})\left\Vert a_{+}\right\Vert _{\infty
}\left\Vert \nabla v_{q\eta }\right\Vert _{2}^{2}
\end{equation*}%
\begin{equation*}
\leq \left[ K_{2}^{2}\left( 1+\varepsilon _{1}\right) (1+\left\Vert
a_{+}\right\Vert _{\infty })\lambda _{a,f}+A(\varepsilon _{1})\right]
\left\Vert v_{q\eta }\right\Vert _{H_{1}^{2}}^{2}.
\end{equation*}%
Consequently%
\begin{equation*}
\left\Vert v_{\eta }\right\Vert _{2}^{2}\geq \frac{1}{\left[ K_{2}^{2}\left(
1+\varepsilon _{1}\right) (1+\left\Vert a_{+}\right\Vert _{\infty })\lambda
_{a,f}+A(\varepsilon _{1})\right] )}.
\end{equation*}%
As in \cite{11} we can show that 
\begin{equation*}
\int_{M}\left\vert v_{\eta }\right\vert ^{N}dv_{g}\leq 1\text{ \ \ and }%
\int_{M}f^{-}\left\vert v_{\eta }\right\vert ^{N}dv_{g}\leq \eta
\int_{M}f^{-}dv_{g}.
\end{equation*}%
Consider the sequence of $\eta $ such that for any $q_{\eta }$, there is a $%
q>q_{\eta }$ with 
\begin{equation*}
\lambda _{a,f,q,\eta }\leq \lambda _{a,f}-\varepsilon .
\end{equation*}%
Now tending $\eta $ to $0$, if $v_{\eta }$ is the sequence corresponding to $%
\eta $ previously considered, $v_{\eta }$ is bounded in $H_{2}$ and%
\begin{equation*}
\left\Vert v_{\eta }\right\Vert _{2}^{2}\geq \frac{1}{\left[ K_{2}^{2}\left(
1+\varepsilon _{1}\right) (1+\left\Vert a_{+}\right\Vert _{\infty })\lambda
_{a,f}+A(\varepsilon _{1})\right] )}.
\end{equation*}%
so $v_{\eta }$ converges weakly to $v\neq 0$ in $H_{2}$ and strongly to $v$
in $H_{r}^{2}$, $r=0,1$ and $v$ satisfies%
\begin{equation}
\left\Vert \Delta v\right\Vert _{2}^{2}-\int_{M}a\left\vert \nabla
v\right\vert ^{2}dv_{g}\leq (\lambda _{a,f}-\varepsilon _{o})\left\Vert
v\right\Vert _{2}^{2}\text{.}  \label{12'}
\end{equation}

On the other hand 
\begin{equation*}
0\leq \int_{M}f^{-}\left\vert v\right\vert ^{N}dv_{g}\leq \lim_{\eta
\longrightarrow 0}\inf \int_{M}f^{-}\left\vert v_{\eta }\right\vert
^{N}dv_{g}\leq \lim_{\eta \rightarrow 0}\eta \int_{M}f^{-}dv_{g}=0
\end{equation*}%
then $\int_{M}f^{-}\left\vert v\right\vert dv_{g}=0$ and $v$ belongs to the
domain $A$ of definition of $\lambda _{a,f}$. Hence 
\begin{equation*}
\lambda _{a,f}\leq \frac{\left\Vert \Delta v\right\Vert
_{2}^{2}-\int_{M}a\left\vert \nabla v\right\vert ^{2}dv_{g}}{%
\int_{M}\left\vert v\right\vert ^{2}dv_{g}}\text{.}
\end{equation*}%
A contradiction with the inequality (\ref{12'}) and Lemma \ref{lem3} is
proved.
\end{proof}

\textit{Case }$f^{+}=0$.

In this case $\lambda _{a,f}$ is not defined so $\lambda _{a,f}=+\infty $.
First, we give the lemma equivalent to Lemma \ref{lem2}

\begin{lemma}
\label{lem4} Let $q\in \left] 2,N\right[ $. For any positive constant $R$,
there exists $\eta _{o}$ such that for any $\ \eta <\eta _{o}$, $\ \lambda
_{a,f,\eta ,q}\geq R.$
\end{lemma}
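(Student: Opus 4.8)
The plan is to argue by contradiction, exactly in the spirit of the proof of Lemma~\ref{lem3} but now in the regime $f^{+}=0$, where $\lambda_{a,f}=+\infty$ and we must show $\lambda_{a,f,\eta,q}$ is forced to be large rather than merely close to a finite value. So suppose there is a constant $R>0$ and a sequence $\eta\to 0$ such that for each such $\eta$ the minimizer $v_{\eta}\in A(\eta,q)$ of $\lambda_{a,f,\eta,q}$ satisfies
\begin{equation*}
\left\Vert \Delta v_{\eta}\right\Vert _{2}^{2}-\int_{M}a\left\vert \nabla v_{\eta}\right\vert ^{2}dv_{g}\leq R\left\Vert v_{\eta}\right\Vert _{2}^{2}\leq R,
\end{equation*}
using $\left\Vert v_{\eta}\right\Vert _{q}=1$ and $\mathrm{Vol}(M)=1$ to get $\left\Vert v_{\eta}\right\Vert _{2}\leq 1$. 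The first step is the a priori $H_2$-bound: combining the displayed inequality with interpolation~(\ref{9}) and absorbing the $a_{+}$ term (choosing $\varepsilon$ with $1-2\varepsilon\left\Vert a_{+}\right\Vert_{\infty}>0$) gives $\left\Vert \Delta v_{\eta}\right\Vert_{2}^{2}+\left\Vert \nabla v_{\eta}\right\Vert_{2}^{2}\leq C(R,\left\Vert a_{+}\right\Vert_{\infty})$, so $(v_{\eta})$ is bounded in $H_2$. This is the same mechanism used in Lemma~\ref{lem2} and Lemma~\ref{lem3}, so it is routine here.

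Next I would extract, up to a subsequence, a weak limit $v_{\eta}\rightharpoonup v$ in $H_2$ with strong convergence in $H_r^2$ for $r=0,1$ and in $L^q$ (compact embedding of $H_2$), so that $\left\Vert v\right\Vert_q=1$ (hence $v\not\equiv 0$), $\left\Vert \Delta v\right\Vert_2^2\leq\liminf\left\Vert\Delta v_{\eta}\right\Vert_2^2$, and $\int_M a\left\vert\nabla v_{\eta}\right\vert^2 dv_g\to\int_M a\left\vert\nabla v\right\vert^2 dv_g$. Passing to the limit in the displayed bound yields
\begin{equation*}
\left\Vert \Delta v\right\Vert _{2}^{2}-\int_{M}a\left\vert \nabla v\right\vert ^{2}dv_{g}\leq R\left\Vert v\right\Vert _{2}^{2}<+\infty.
\end{equation*}
On the other hand, from $\int_M f^{-}\left\vert v_{\eta}\right\vert^q dv_g=\eta\int_M f^{-}dv_g\to 0$ together with strong $L^q$ convergence we get $\int_M f^{-}\left\vert v\right\vert^q dv_g=0$, and since $f^{-}\geq 0$ and $v\not\equiv 0$ this means $v$ lies in the admissible set $A$ defining $\lambda_{a,f}$. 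But $f^{+}=0$ forces $A$ to be empty by convention (or: $\lambda_{a,f}=+\infty$ precisely because no such $v$ can exist), contradicting the existence of $v\in A$. Hence $\lambda_{a,f,\eta,q}\geq R$ for all $\eta$ small enough.

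The main subtlety — and the only place the $f^{+}=0$ hypothesis really enters — is making the last step clean: when $f^{+}=0$ we have $f=-f^{-}\leq 0$, and the condition $\int_M f^{-}\left\vert v\right\vert^q dv_g=0$ with $v\not\equiv 0$ would require $v$ to be supported in $\{f^{-}=0\}=\{f=0\}$, a set of measure zero (this is exactly the hypothesis "the set $\{f\geq 0\}$ is negligible and $\sup_M f=0$"). Therefore $v\equiv 0$ a.e., contradicting $\left\Vert v\right\Vert_q=1$. I should phrase it through this measure-zero observation rather than through the $A=\emptyset$ convention, since that is the concrete fact being used; this mirrors how Rauzy~\cite{11} handles the analogous dichotomy for the Laplacian, and the only new ingredient over that reference is the second-order interpolation $(\ref{9})$ needed to control $\left\Vert\nabla v_{\eta}\right\Vert_2$ in the fourth-order setting, which is already in hand.
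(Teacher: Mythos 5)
Your proposal is correct and follows essentially the same route as the paper: contradiction, the $H_2$-bound on the minimizers via the interpolation inequality (\ref{9}), weak/strong compactness to extract a limit $v$ with $\left\Vert v\right\Vert_{q}=1$, and the observation that $\int_{M}f^{-}\left\vert v\right\vert^{q}dv_{g}=0$ forces $v=0$ a.e.\ because $f^{-}>0$ off a null set when $f^{+}=0$. Your decision to phrase the final contradiction through the measure-zero support rather than the $A=\emptyset$ convention is exactly the fact the paper's own (terser) proof is implicitly using.
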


\begin{proof}
We argue by contradiction. It is easy to show that $\lambda _{a,f,q,\eta }$
is achieved by a function $v_{q\eta }$ in $H_{2}$ with $\left\Vert v_{q,\eta
}\right\Vert _{q}=1$. Suppose that there is $\lambda _{a,f,\eta ,q}$ bounded
when $\eta $ goes to $0$. Then%
\begin{equation*}
\left\Vert \Delta v_{q,\eta }\right\Vert _{2}^{2}-\left\Vert
a_{+}\right\Vert _{\infty }\left\Vert \nabla v_{_{q,,\eta }}\right\Vert
_{2}^{2}\leq \frac{\left\Vert \Delta v_{q,\eta }\right\Vert
_{2}^{2}-\left\Vert a_{+}\right\Vert _{\infty }\left\Vert \nabla
v_{_{q,,\eta }}\right\Vert _{2}^{2}}{\left\Vert v_{_{q,,\eta }}\right\Vert
_{2}^{2}}
\end{equation*}%
\begin{equation*}
\leq \lambda _{a,f,q,\eta }<+\infty \text{.}
\end{equation*}%
and proceeding as in the proof of Lemma \ref{lem2} we get that the sequence $%
v_{q\eta }$ indexed by $\eta $ is bounded in $H_{2}$. Consequently the
sequence $v_{q\eta }$ converges weakly to $v_{q}$ in $H_{2}$ and converges
strongly to $v_{q}$ in $H_{r}^{2}$ , $r=0,1$, and strongly to $v_{q}$ in $%
L^{q}$ as $\eta $ goes to $0$. $\int_{M}f^{-}\left\vert v_{q}\right\vert
^{q}dv_{g}=0$ which implies that $v_{q}=0$ almost everywhere and $\left\Vert
v_{q}\right\Vert _{q}=1$ which are in contradiction with each other.
\end{proof}

Now we give an analogue to Lemma \ref{lem3}.

\begin{lemma}
\bigskip \label{lem5} There exists an $\eta _{o}$ such that for any $\eta
<\eta _{o}$ there is $q_{\eta }$ such that for any $q>q_{\eta }$ we have $%
\lambda _{a,f,q,\eta }>\left\Vert h\right\Vert _{\infty }$.
\end{lemma}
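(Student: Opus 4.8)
The plan is to argue by contradiction along the lines of the proof of Lemma \ref{lem3}, with $\left\Vert h\right\Vert_{\infty}$ playing the role of $\lambda_{a,f}-\varepsilon$ there, exploiting that in the present case ($f^{+}=0$) the set $\{x\in M:f(x)\geq0\}$ is negligible, so that $f^{-}>0$ almost everywhere. Suppose the conclusion fails: then for every $\eta_{o}>0$ there is $\eta<\eta_{o}$ such that for every $q_{\eta}$ there exists $q>q_{\eta}$ with $\lambda_{a,f,q,\eta}\leq\left\Vert h\right\Vert_{\infty}$. Hence one may pick a sequence $\eta\to0$ and, for each such $\eta$, a sequence $q\to N$ with $\lambda_{a,f,q,\eta}\leq\left\Vert h\right\Vert_{\infty}$. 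As in the proof of Lemma \ref{lem4}, $\lambda_{a,f,q,\eta}$ is attained by some $v_{q\eta}\in H_{2}$ with $\left\Vert v_{q\eta}\right\Vert_{q}=1$, and since $Vol(M)=1$ the H\"{o}lder inequality gives $\left\Vert v_{q\eta}\right\Vert_{2}\leq1$.

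Next I would establish uniform a priori estimates. From $\left\Vert \Delta v_{q\eta}\right\Vert_{2}^{2}-\left\Vert a_{+}\right\Vert_{\infty}\left\Vert \nabla v_{q\eta}\right\Vert_{2}^{2}\leq\lambda_{a,f,q,\eta}\left\Vert v_{q\eta}\right\Vert_{2}^{2}\leq\left\Vert h\right\Vert_{\infty}$, together with formula (\ref{9}) for $\varepsilon>0$ chosen so that $1-2\varepsilon\left\Vert a_{+}\right\Vert_{\infty}>0$, one obtains exactly as in Lemma \ref{lem2} a bound $\left\Vert \Delta v_{q\eta}\right\Vert_{2}^{2}\leq C^{\prime\prime}\left\Vert v_{q\eta}\right\Vert_{2}^{2}$ with $C^{\prime\prime}$ depending only on $\left\Vert h\right\Vert_{\infty}$, $\left\Vert a_{+}\right\Vert_{\infty}$ and $\varepsilon$; in particular $(v_{q\eta})$ is bounded in $H_{2}$ uniformly in $q$ and $\eta$. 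The new point — since $\lambda_{a,f}=+\infty$ is not available as an upper bound here — is to insert this back into the Sobolev inequality of Lemma \ref{lem1}: $1=\left\Vert v_{q\eta}\right\Vert_{q}^{2}\leq\left\Vert v_{q\eta}\right\Vert_{N}^{2}\leq K_{2}^{2}(1+\varepsilon_{1})\left\Vert \Delta v_{q\eta}\right\Vert_{2}^{2}+A_{2}(\varepsilon_{1})\left\Vert v_{q\eta}\right\Vert_{2}^{2}$, which combined with $\left\Vert \Delta v_{q\eta}\right\Vert_{2}^{2}\leq C^{\prime\prime}\left\Vert v_{q\eta}\right\Vert_{2}^{2}$ yields a uniform lower bound $\left\Vert v_{q\eta}\right\Vert_{2}^{2}\geq c_{0}>0$.

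Then I would pass to the limit twice, as in Lemma \ref{lem3}. For fixed $\eta$ and $q\to N$, after extracting a subsequence, $v_{q\eta}$ converges weakly in $H_{2}$, strongly in $H_{r}^{2}$ for $r=0,1$, and almost everywhere, to some $v_{\eta}$; lower semicontinuity of $\left\Vert \Delta\cdot\right\Vert_{2}$ and strong convergence of the lower-order terms give $\left\Vert \Delta v_{\eta}\right\Vert_{2}^{2}-\int_{M}a\left\vert \nabla v_{\eta}\right\vert^{2}dv_{g}\leq\left\Vert h\right\Vert_{\infty}\left\Vert v_{\eta}\right\Vert_{2}^{2}$, while $\left\Vert v_{\eta}\right\Vert_{2}^{2}\geq c_{0}$ and, as in \cite{11}, $\int_{M}f^{-}\left\vert v_{\eta}\right\vert^{N}dv_{g}\leq\eta\int_{M}f^{-}dv_{g}$. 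Letting now $\eta\to0$, $v_{\eta}$ converges weakly in $H_{2}$, strongly in $H_{r}^{2}$ ($r=0,1$) and almost everywhere to some $v$; the lower bound persists, so $\left\Vert v\right\Vert_{2}^{2}\geq c_{0}>0$, in particular $v\not\equiv0$, and by Fatou $0\leq\int_{M}f^{-}\left\vert v\right\vert^{N}dv_{g}\leq\liminf_{\eta\to0}\int_{M}f^{-}\left\vert v_{\eta}\right\vert^{N}dv_{g}\leq\lim_{\eta\to0}\eta\int_{M}f^{-}dv_{g}=0$.

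This is the desired contradiction: in the case $f^{+}=0$ under study $f^{-}>0$ almost everywhere, so $\int_{M}f^{-}\left\vert v\right\vert^{N}dv_{g}=0$ forces $v=0$ a.e., contradicting $\left\Vert v\right\Vert_{2}^{2}\geq c_{0}>0$ (equivalently, $v$ would lie in the domain $A$ of $\lambda_{a,f}$, which is empty here). Hence the assumption is impossible, which is exactly the assertion of the lemma. I expect the main obstacle to be the a priori analysis of the second paragraph: with only $\left\Vert h\right\Vert_{\infty}$, and not $\lambda_{a,f}$, controlling the quotients, one must check that the positive lower bound $\left\Vert v_{q\eta}\right\Vert_{2}^{2}\geq c_{0}$ extracted from the Sobolev inequality is genuinely uniform in $q$ as $q\to N$ (and in $\eta$), so that it survives both weak limits; the remainder is a routine transcription of Lemmas \ref{lem2} and \ref{lem3}.
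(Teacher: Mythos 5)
Your proof is correct and follows exactly the route the paper intends: the paper omits the proof of Lemma \ref{lem5} as ``similar to the previous ones,'' and your argument is the natural fusion of the contradiction scheme of Lemma \ref{lem3} with the $f^{+}=0$ conclusion of Lemma \ref{lem4} (where $\int_{M}f^{-}\left\vert v\right\vert^{N}dv_{g}=0$ with $f^{-}>0$ a.e.\ forces $v\equiv 0$). You also correctly supply the one adaptation the omitted proof requires, namely replacing the $\lambda_{a,f}$-based bound of Lemma \ref{lem3} (useless here since $\lambda_{a,f}=+\infty$) by the bound $\left\Vert \Delta v_{q\eta}\right\Vert_{2}^{2}\leq C''\left\Vert v_{q\eta}\right\Vert_{2}^{2}$ coming from the contradiction hypothesis $\lambda_{a,f,q,\eta}\leq\left\Vert h\right\Vert_{\infty}$, which makes the lower bound $\left\Vert v_{q\eta}\right\Vert_{2}^{2}\geq c_{0}$ genuinely uniform in $q$ and $\eta$.
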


The proof of this lemma is similar to the previous ones so we omit it.

Let $\sigma >0,$ any sufficient small real number, with the previous
notations we obtain by using the lemmas quoted above the following

\begin{lemma}
\label{lem6} (1) Suppose that $\sup_{M}f>0$ and $\left\Vert h\right\Vert
_{\infty }<\lambda _{a,f}$. There exists $\eta $ such that $\lambda
_{a,f,q,\eta }-\left\Vert h\right\Vert _{\infty }=\varepsilon _{o}>0$.

Let $b=\frac{\left( 1-2\sigma \left\Vert a_{+}\right\Vert _{\infty }\right)
\varepsilon _{o}}{\left[ \left( \varepsilon _{o}+\left\Vert h\right\Vert
_{\infty }+2\left\Vert a_{+}\right\Vert _{\infty }C(\sigma )\right)
K_{2}^{2}\left( 1+\varepsilon \right) +\left( 1-2\sigma \left\Vert
a_{+}\right\Vert _{\infty }\right) A(\varepsilon )\right] }$

$\mu =\inf \left( b,\left\Vert h\right\Vert _{\infty }+2\left\Vert
a_{+}\right\Vert _{\infty }C(\sigma )\right) $ and suppose that

$\ \frac{\sup_{M}f}{\int_{M}f^{-}dv_{g}}<\frac{\mu \eta }{8\left( \left\Vert
h\right\Vert _{\infty }+2\left\Vert a_{+}\right\Vert _{\infty }C(\sigma
)\right) }$, where and $K_{2}^{2}$, $A(\epsilon )$ are the constants
appearing in the Sobolev inequality given by Lemma \ref{lem1}. For any $q\in %
\left] 2,N\right[ $ there exists a non empty interval $I_{q}$\ $\subset
R^{+} $ such that for every $u\in H_{2}$ with $L^{q}$-norm $\ k^{\frac{1}{q}%
} $ and $k\in $ $I_{q}=\left[ k_{1,q},k_{2\text{,}q}\right] $ we have \ $%
F_{q}(u)\geq \frac{1}{2}\mu k^{\frac{2}{q}}$.

(2) Suppose that $\sup_{M}f=0$ and $\left\Vert h\right\Vert _{\infty
}<\lambda _{a,f}$ , there exists an interval $I_{q}=\left[ k_{1,q},+\infty %
\right[ $ such that for any $k\in I_{q}$ and any $u\in H_{2}$ with $%
\left\Vert u\right\Vert _{q}^{q}=k$, we have $F_{q}(u)\geq \frac{1}{2}\mu k^{%
\frac{2}{q}}.$
\end{lemma}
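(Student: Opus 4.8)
The plan is to adapt the scheme of \cite{11}: normalize $u$ in $L^q$ and split according to whether the negative part of $f$ is small or large against $u$. Write $u=k^{1/q}w$ with $\Vert w\Vert_q=1$ and $k=\Vert u\Vert_q^q$; by homogeneity $F_q(u)=k^{2/q}\bigl(\Vert\Delta w\Vert_2^2-\int_M a|\nabla w|^2\,dv_g+\int_M hw^2\,dv_g\bigr)-k\int_M f|w|^q\,dv_g$, and since $\int_M hw^2\,dv_g\ge-\Vert h\Vert_\infty\Vert w\Vert_2^2$ it suffices to control $Q(w):=\Vert\Delta w\Vert_2^2-\int_M a|\nabla w|^2\,dv_g$ and $\int_M f|w|^q\,dv_g$. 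Throughout I would use (\ref{9}) in the form $\Vert\nabla w\Vert_2^2\le2\sigma\Vert\Delta w\Vert_2^2+2C(\sigma)\Vert w\Vert_2^2$, which on the one hand gives $Q(w)\ge(1-2\sigma\Vert a_+\Vert_\infty)\Vert\Delta w\Vert_2^2-2\Vert a_+\Vert_\infty C(\sigma)\Vert w\Vert_2^2$ and on the other — with Lemma \ref{lem1} and $\Vert w\Vert_2^2\le\Vert w\Vert_q^2=1$ (as $\mathrm{Vol}(M)=1$) — lets one compare $\Vert\Delta w\Vert_2^2$ with $Q(w)$. For part (2) one first uses Lemma \ref{lem4} (resp.\ \ref{lem5}) to pick $\eta$ with $\lambda_{a,f,q,\eta}>\Vert h\Vert_\infty$ and then defines $\varepsilon_o,b,\mu$ as in part (1).

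First regime: $\int_M f^-|w|^q\,dv_g\le\eta\int_M f^-\,dv_g$, so $w\in A'(\eta,q)$ and $Q(w)\ge\lambda'_{a,f,\eta,q}\Vert w\Vert_2^2=\lambda_{a,f,\eta,q}\Vert w\Vert_2^2$, whence $Q(w)-\Vert h\Vert_\infty\Vert w\Vert_2^2\ge\varepsilon_o\Vert w\Vert_2^2$. Feeding $\Vert\Delta w\Vert_2^2\le(1-2\sigma\Vert a_+\Vert_\infty)^{-1}\bigl(Q(w)+2\Vert a_+\Vert_\infty C(\sigma)\Vert w\Vert_2^2\bigr)$ into $1=\Vert w\Vert_q^2\le\Vert w\Vert_N^2\le K_2^2(1+\varepsilon)\Vert\Delta w\Vert_2^2+A(\varepsilon)\Vert w\Vert_2^2$ and then substituting $\Vert w\Vert_2^2\le\varepsilon_o^{-1}\bigl(Q(w)-\Vert h\Vert_\infty\Vert w\Vert_2^2\bigr)$, a one-line rearrangement gives precisely $Q(w)-\Vert h\Vert_\infty\Vert w\Vert_2^2\ge b$. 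Since $\int_M f|w|^q\,dv_g\le\int_M f^+|w|^q\,dv_g\le\sup_M f$ (in case (1), $\sup_M f=\sup_M f^+>0$ and $\int_M|w|^q=1$), we obtain $F_q(u)\ge b\,k^{2/q}-k\sup_M f\ge\frac12\mu k^{2/q}$ provided $k\le k_{2,q}:=\bigl((b-\tfrac12\mu)/\sup_M f\bigr)^{q/(q-2)}$ (using $\mu\le b$). In case (2), where $f\le0$, one has $\int_M f|w|^q\,dv_g\le0$, hence $F_q(u)\ge b\,k^{2/q}\ge\frac12\mu k^{2/q}$ for \emph{every} $k$; this is why part (2) has no right endpoint.

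Second regime: $\int_M f^-|w|^q\,dv_g>\eta\int_M f^-\,dv_g$. Here the quadratic part may be negative, but only mildly: dropping the nonnegative $\Vert\Delta w\Vert_2^2$ term above and using $\Vert w\Vert_2^2\le1$ gives $Q(w)-\Vert h\Vert_\infty\Vert w\Vert_2^2\ge-(\Vert h\Vert_\infty+2\Vert a_+\Vert_\infty C(\sigma))=:-H$. On the other side $-\int_M f|w|^q\,dv_g=\int_M f^-|w|^q-\int_M f^+|w|^q>\eta\int_M f^-\,dv_g-\sup_M f$, and the hypothesis $\sup_M f/\int_M f^-<\mu\eta/(8H)$ together with $\mu\le H$ forces $\sup_M f<\frac{\eta}{8}\int_M f^-\,dv_g$, so $-\int_M f|w|^q\,dv_g>\frac12\eta\int_M f^-\,dv_g$. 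Therefore $F_q(u)\ge-Hk^{2/q}+\frac12\eta\bigl(\int_M f^-\,dv_g\bigr)k\ge\frac12\mu k^{2/q}$ provided $k\ge k_{1,q}:=\bigl((\mu+2H)/(\eta\int_M f^-\,dv_g)\bigr)^{q/(q-2)}$ (same formula in case (2)).

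Putting the two regimes together yields $F_q(u)\ge\frac12\mu k^{2/q}$ for every $k\in[k_{1,q},k_{2,q}]$ in case (1) and for every $k\in[k_{1,q},+\infty[$ in case (2). The only thing left to check — and the point of the quantitative assumption — is that $I_q=[k_{1,q},k_{2,q}]\neq\emptyset$, i.e.\ $k_{1,q}\le k_{2,q}$; since $t\mapsto t^{q/(q-2)}$ is increasing this is $\sup_M f/\int_M f^-\le\eta(b-\tfrac12\mu)/(\mu+2H)$, and one verifies $\mu\eta/(8H)\le\eta(b-\tfrac12\mu)/(\mu+2H)$ by clearing denominators, which reduces to $\mu^2+6\mu H\le8bH$ — true since $\mu^2\le bH$ and $6\mu H\le6bH$ by $\mu\le b$ and $\mu\le H$. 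I expect the main obstacle to be the bookkeeping in the second regime — keeping the now-negative quadratic form under control by the positive gain $-\int_M f|u|^q>0$ — together with this final non-emptiness check; everything else is the routine Sobolev-and-(\ref{9}) manipulation of the first paragraph.
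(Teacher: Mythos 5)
Your proof is correct and follows essentially the same strategy as the paper's: the same dichotomy on whether $\int_M f^-|u|^q\,dv_g$ exceeds $\eta k\int_M f^-dv_g$, with the $\lambda_{a,f,\eta,q}$ regime closed by inequality (\ref{9}) plus the Sobolev inequality of Lemma \ref{lem1} to produce the constant $b$, and the other regime closed by the positive gain from $\int_M f^-|u|^q\,dv_g$. The only differences are cosmetic: your direct substitution $\left\Vert w\right\Vert _{2}^{2}\leq \varepsilon _{o}^{-1}G$ replaces the paper's $\delta _{1}+\delta _{2}=\varepsilon _{o}$ splitting (and is cleaner), and your explicit endpoints $k_{1,q}$, $k_{2,q}$ differ by harmless constant factors from the paper's choices $k_{1,q}=\left[ 2H/(\eta \int_{M}f^{-}dv_{g})\right] ^{q/(q-2)}$ and $k_{2,q}=2^{q/(q-2)}k_{1,q}$, which the paper reuses verbatim later as $l_{q}$.
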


\begin{proof}
\textit{Case: }$f^{+}>0$.

Let $u\in H_{2}$ such that $\left\Vert u\right\Vert _{q}^{q}=k$.

Putting 
\begin{equation*}
G_{q}(u)=\left\Vert \Delta u\right\Vert _{2}^{2}-\int_{M}a\left\vert \nabla
u\right\vert ^{2}dv_{g}+\int_{M}hu^{2}dv_{g}+\int_{M}f^{-}\left\vert
u\right\vert ^{q}dv_{g}\text{,}
\end{equation*}%
we get

\begin{equation*}
G_{q}(u)\geq \left\Vert \Delta u\right\Vert _{2}^{2}-\left\Vert
a_{+}\right\Vert _{\infty }\left\Vert \nabla u\right\Vert
_{2}^{2}-\left\Vert h\right\Vert _{\infty }\left\Vert u\right\Vert
_{2}^{2}+\int_{M}f^{-}\left\vert u\right\vert ^{q}dv_{g}
\end{equation*}%
and taking account of (\ref{9}), we obtain that for any suitable real $%
\sigma >0$, there is a constant $C(\sigma )>0$ such that%
\begin{equation*}
G_{q}(u)\geq \left( 1-2\sigma \left\Vert a_{+}\right\Vert _{\infty }\right)
\left\Vert \Delta u\right\Vert _{2}^{2}
\end{equation*}%
\begin{equation*}
-\left( \left\Vert h\right\Vert _{\infty }+2C(\sigma )\left\Vert
a_{+}\right\Vert _{\infty }\right) \left\Vert u\right\Vert
_{2}^{2}+\int_{M}f^{-}\left\vert u\right\vert ^{q}dv_{g}.
\end{equation*}%
So if 
\begin{equation*}
\int_{M}f^{-}\left\vert u\right\vert ^{q}dvg\geq \eta k\int_{M}f^{-}dv_{g}
\end{equation*}%
then%
\begin{equation*}
G_{q}(u)\geq \left( 1-2\sigma \left\Vert a_{+}\right\Vert _{\infty }\right)
\left\Vert \Delta u\right\Vert _{2}^{2}
\end{equation*}%
\begin{equation}
-\left( \left\Vert h\right\Vert _{\infty }+2C(\sigma )\left\Vert
a_{+}\right\Vert _{\infty }\right) \left\Vert u\right\Vert _{2}^{2}+\eta
k\int_{M}f^{-}dv_{g}  \label{13}
\end{equation}%
with $\sigma >0$ sufficiently small so that 
\begin{equation*}
1-2\sigma \left\Vert a_{+}\right\Vert _{\infty }>0.
\end{equation*}%
Now since%
\begin{equation*}
\left\Vert u\right\Vert _{2}^{2}\leq \left\Vert u\right\Vert _{q}^{\frac{2}{q%
}}Vol(M)^{1-\frac{2}{q}}=k^{\frac{2}{q}}
\end{equation*}%
we get%
\begin{equation*}
G_{q}(u)\geq k^{\frac{2}{q}}\left[ -\left( \left\Vert h\right\Vert _{\infty
}+2\left\Vert a_{+}\right\Vert _{\infty }C(\sigma )\right) +\eta k^{1-\frac{2%
}{q}}\int_{M}f^{-}dv_{g}\right]
\end{equation*}%
\begin{equation*}
\geq k^{\frac{2}{q}}\left( \left\Vert h\right\Vert _{\infty }+2\left\Vert
a_{+}\right\Vert _{\infty }C(\sigma )\right) \left( \frac{\eta k^{1-\frac{2}{%
q}}}{\left\Vert h\right\Vert _{\infty }+2\left\Vert a_{+}\right\Vert
_{\infty }C(\sigma )}\int_{M}f^{-}dv_{g}-1\right)
\end{equation*}%
and choosing $k$ such that%
\begin{equation*}
\frac{\eta k^{1-\frac{2}{q}}}{\left\Vert h\right\Vert _{\infty }+2\left\Vert
a_{+}\right\Vert _{\infty }C(\sigma )}\int_{M}f^{-}dv_{g}-1\geq 1
\end{equation*}%
that is 
\begin{equation*}
k\geq \left[ 2\frac{\left\Vert h\right\Vert _{\infty }+2\left\Vert
a_{+}\right\Vert _{\infty }C(\sigma )}{\eta \int_{M}f^{-}dv_{g}}\right] ^{%
\frac{q}{q-2}}
\end{equation*}%
we obtain%
\begin{equation*}
G_{q}(u)\geq k^{\frac{2}{q}}\left( \left\Vert h\right\Vert _{\infty
}+2\left\Vert a_{+}\right\Vert _{\infty }C(\sigma )\right) \text{.}
\end{equation*}%
Let%
\begin{equation*}
k_{1,q}=\left[ 2\frac{\left\Vert h\right\Vert _{\infty }+2\left\Vert
a_{+}\right\Vert _{\infty }C(\sigma )}{\eta \int_{M}f^{-}dv_{g}}\right] ^{%
\frac{q}{q-2}}\text{.}
\end{equation*}%
In the case $\int_{M}f^{-}\left\vert u\right\vert ^{q}dv_{g}<\eta
k\int_{M}f^{-}dv_{g}$, we have%
\begin{equation*}
\left\Vert \Delta u\right\Vert _{2}^{2}-\int_{M}a\left\vert \nabla
u\right\vert ^{2}dv_{g}\geq \lambda _{a,f,q,\eta }\left\Vert u\right\Vert
_{2}^{2}
\end{equation*}%
so 
\begin{equation*}
G_{q}(u)\geq \lambda _{a,f,\eta ,q}\left\Vert u\right\Vert
_{2}^{2}+\int_{M}hu^{2}dv_{g}+\int_{M}f^{-}\left\vert u\right\vert ^{q}dv_{g}
\end{equation*}%
\begin{equation*}
\geq (\lambda _{a},_{f,\eta ,q}-\left\Vert h\right\Vert _{\infty
})\left\Vert u\right\Vert _{2}^{2}+\int_{M}f^{-}\left\vert u\right\vert
^{q}dv_{g}
\end{equation*}%
by Lemma \ref{lem3} and \ref{lem5} there exists $\eta $ such that 
\begin{equation*}
\lambda _{a},_{f,\eta ,q}-\left\Vert h\right\Vert _{\infty }=\varepsilon
_{o}>0\text{.}
\end{equation*}%
Now, putting $\delta _{1}+\delta _{2}=\varepsilon _{o}$, where $\delta _{1}$
and $\delta _{2}$ are positive real numbers, and solving $\left\Vert
u\right\Vert _{2}^{2}$ in (\ref{13}), we get 
\begin{equation*}
\left\Vert u\right\Vert _{2}^{2}\geq \frac{1}{\left\Vert h\right\Vert
_{\infty }+2\left\Vert a_{+}\right\Vert _{\infty }C(\sigma )}\left[ \left(
1-2\sigma \left\Vert a_{+}\right\Vert _{\infty }\right) \left\Vert \Delta
u\right\Vert _{2}^{2}-G_{q}(u)+\int_{M}f^{-}\left\vert u\right\vert
^{q}dv_{g}\right] \text{.}
\end{equation*}%
Consequently%
\begin{equation*}
\left( 1+\frac{\delta _{2}}{\left\Vert h\right\Vert _{\infty }+2\left\Vert
a_{+}\right\Vert _{\infty }C(\sigma )}\right) G_{q}(u)\geq \delta
_{1}\left\Vert u\right\Vert _{2}^{2}+\frac{\delta _{2}}{\left\Vert
h\right\Vert _{\infty }+2\left\Vert a_{+}\right\Vert _{\infty }C(\sigma )}%
\left( 1-2\sigma \left\Vert a_{+}\right\Vert _{\infty }\right) \left\Vert
\Delta u\right\Vert _{2}^{2}
\end{equation*}%
so 
\begin{equation*}
G_{q}(u)\geq \frac{\delta _{1}\left( \left\Vert h\right\Vert _{\infty
}+2\left\Vert a_{+}\right\Vert _{\infty }C(\sigma )\right) }{\left\Vert
h\right\Vert _{\infty }+2\left\Vert a_{+}\right\Vert _{\infty }C(\sigma
)+\delta _{2}}\left\Vert u\right\Vert _{2}^{2}+\frac{\delta _{2}\left(
1-2\sigma \left\Vert a_{+}\right\Vert _{\infty }\right) }{\left\Vert
h\right\Vert _{\infty }+2\left\Vert a_{+}\right\Vert _{\infty }C(\sigma
)+\delta _{2}}\left\Vert \Delta u\right\Vert _{2}^{2}
\end{equation*}%
and where $\sigma $ is sufficiently small and such that $1-2\left\Vert
a_{+}\right\Vert _{\infty }\sigma >0$.

Or 
\begin{equation*}
G_{q}(u)\geq \frac{\delta _{2}\left( 1-2\sigma \left\Vert a_{+}\right\Vert
_{\infty }\right) }{\left( \left\Vert h\right\Vert _{\infty }+2\left\Vert
a_{+}\right\Vert _{\infty }C(\sigma )+\delta _{2}\right) \left(
K_{2}^{2}+\varepsilon \right) }
\end{equation*}%
\begin{equation*}
\times \left[ K_{2}^{2}\left( 1+\varepsilon \right) \left\Vert \Delta
u\right\Vert _{2}^{2}+\frac{\delta _{1}\left( \left\Vert h\right\Vert
_{\infty }+2\left\Vert a_{+}\right\Vert _{\infty }C(\sigma )\right) \left(
K_{2}^{2}+\varepsilon \right) }{\delta _{2}\left( 1-2\sigma \left\Vert
a_{+}\right\Vert _{\infty }\right) A(\varepsilon )}A(\varepsilon )\left\Vert
u\right\Vert _{2}^{2}\right]
\end{equation*}%
where for any fixed $\varepsilon >0$, $K_{2}^{2}$ denotes the best Sobolev
constant in the embedding of $H_{2}^{2}(R^{n})$ in $L^{q}(R^{n}).$

Taking $\delta _{1}$ and $\delta _{2}$ such that%
\begin{equation*}
\frac{\delta _{1}\left( \left\Vert h\right\Vert _{\infty }+2\left\Vert
a_{+}\right\Vert _{\infty }C(\sigma )\right) \left( K_{2}^{2}+\varepsilon
\right) }{\delta _{2}\left( 1-2\sigma \left\Vert a_{+}\right\Vert _{\infty
}\right) A(\varepsilon )}=1
\end{equation*}%
we get%
\begin{equation*}
\delta _{1}=\frac{\left( 1-2\sigma \left\Vert a_{+}\right\Vert _{\infty
}\right) A(\varepsilon )}{\left( \left\Vert h\right\Vert _{\infty
}+2\left\Vert a_{+}\right\Vert _{\infty }C(\sigma )\right) K_{2}^{2}\left(
1+\varepsilon \right) +\left( 1-2\sigma \left\Vert a_{+}\right\Vert _{\infty
}\right) A(\varepsilon )}\varepsilon _{o}
\end{equation*}%
and%
\begin{equation*}
\delta _{2}=\frac{\left( \left\Vert h\right\Vert _{\infty }+2\left\Vert
a_{+}\right\Vert _{\infty }C(\sigma )\right) \left( K_{2}^{2}+\varepsilon
\right) }{\left( \left\Vert h\right\Vert _{\infty }+2\left\Vert
a_{+}\right\Vert _{\infty }C(\sigma )\right) K_{2}^{2}\left( 1+\varepsilon
\right) +\left( 1-2\sigma \left\Vert a_{+}\right\Vert _{\infty }\right)
A(\varepsilon )}\varepsilon _{o}.
\end{equation*}%
Consequently%
\begin{equation*}
G_{q}(u)\geq \frac{\delta _{2}\left( 1-2\sigma \left\Vert a_{+}\right\Vert
_{\infty }\right) }{\left( \left\Vert h\right\Vert _{\infty }+2\left\Vert
a_{+}\right\Vert _{\infty }C(\sigma )+\delta _{2}\right) \left(
K_{2}^{2}+\varepsilon \right) }\left\Vert u\right\Vert _{q}^{2}
\end{equation*}%
and since%
\begin{equation*}
\left\Vert h\right\Vert _{\infty }+2\left\Vert a_{+}\right\Vert _{\infty
}C(\sigma )+\delta _{2}=\left( \left\Vert h\right\Vert _{\infty
}+2\left\Vert a_{+}\right\Vert _{\infty }C(\sigma )\right)
\end{equation*}%
\begin{equation*}
\times \left[ 1+\frac{K_{2}^{2}\left( 1+\varepsilon \right) }{\left(
\left\Vert h\right\Vert _{\infty }+2\left\Vert a_{+}\right\Vert _{\infty
}C(\sigma )\right) K_{2}^{2}\left( 1+\varepsilon \right) +\left( 1-2\sigma
\left\Vert a_{+}\right\Vert _{\infty }\right) A(\varepsilon )}\varepsilon
_{o}\right]
\end{equation*}%
\begin{equation*}
=\frac{\left( \varepsilon _{o}+\left\Vert h\right\Vert _{\infty
}+2\left\Vert a_{+}\right\Vert _{\infty }C(\sigma )\right) K_{2}^{2}\left(
1+\varepsilon \right) +\left( 1-2\sigma \left\Vert a_{+}\right\Vert _{\infty
}\right) A(\varepsilon )}{\left( \left\Vert h\right\Vert _{\infty
}+2\left\Vert a_{+}\right\Vert _{\infty }C(\sigma )\right) K_{2}^{2}\left(
1+\varepsilon \right) +\left( 1-2\sigma \left\Vert a_{+}\right\Vert _{\infty
}\right) A(\varepsilon )}
\end{equation*}%
we get that%
\begin{equation*}
G_{q}(u)\geq \frac{\left( 1-2\sigma \left\Vert a_{+}\right\Vert _{\infty
}\right) \varepsilon _{o}}{\left[ \left( \varepsilon _{o}+\left\Vert
h\right\Vert _{\infty }+2\left\Vert a_{+}\right\Vert _{\infty }C(\sigma
)\right) K_{2}^{2}\left( 1+\varepsilon \right) +\left( 1-2\sigma \left\Vert
a_{+}\right\Vert _{\infty }\right) A(\varepsilon )\right] }k^{\frac{2}{q}}%
\text{.}
\end{equation*}%
Letting 
\begin{equation*}
b=\frac{\left( 1-2\sigma \left\Vert a_{+}\right\Vert _{\infty }\right)
\varepsilon _{o}}{\left[ \left( \varepsilon _{o}+\left\Vert h\right\Vert
_{\infty }+2\left\Vert a_{+}\right\Vert _{\infty }C(\sigma )\right)
K_{2}^{2}\left( 1+\varepsilon \right) +\left( 1-2\sigma \left\Vert
a_{+}\right\Vert _{\infty }\right) A(\varepsilon )\right] }
\end{equation*}%
we get 
\begin{equation*}
F_{q}(u)=G_{q}(u)-\int_{M}f^{+}\left\vert u\right\vert ^{q}dv_{g}
\end{equation*}%
\begin{equation*}
\geq bk^{\frac{2}{q}}-\int_{M}f^{+}\left\vert u\right\vert ^{q}dv_{g}\geq
bk^{\frac{2}{q}}-k\sup f^{+}=k^{\frac{2}{q}}(b-k^{1-\frac{^{2}}{q}}\sup
f^{+}).
\end{equation*}%
So if $\sup_{M}f>0$, let $\mu =\inf \left( b,\left\Vert h\right\Vert
_{\infty }+2\left\Vert a_{+}\right\Vert _{\infty }C(\sigma )\right) $. For
any $k\geq k_{1,q}$, we have%
\begin{equation*}
F_{q}(u)\geq k^{\frac{2}{q}}(\mu -k^{1-\frac{^{2}}{q}}\sup f)
\end{equation*}%
Now if we put $C_{q}=\frac{\eta }{8\left( \left\Vert h\right\Vert _{\infty
}+2\left\Vert a_{+}\right\Vert _{\infty }C(\sigma )\right) }\mu $ and
suppose that $\sup_{M}f\leq C_{q}\int_{M}f^{-}$, we obtain that the
inequality is fulfilled provided that%
\begin{equation*}
k\leq \left[ \frac{4\left( \left\Vert h\right\Vert _{\infty }+2\left\Vert
a_{+}\right\Vert _{\infty }C(\sigma )\right) }{\eta \int_{M}f^{-}dv_{g}}%
\right] ^{\frac{q}{q-2}}=2^{\frac{q}{q-2}}k_{1,q}\text{.}
\end{equation*}%
and%
\begin{equation*}
F_{q}(u)\geq \frac{1}{2}\mu k^{\frac{2}{q}}
\end{equation*}%
provided that 
\begin{equation*}
k\leq \left[ \frac{\mu }{2\sup f}\right] ^{\frac{q}{q-2}}.
\end{equation*}%
We put%
\begin{equation*}
k_{2,q}=2^{\frac{q}{q-2}}k_{1,q}.
\end{equation*}

\textit{Case }$f^{+}=0$.

In this case, for any $k\geq k_{1,q}$, $\ $%
\begin{equation*}
F_{q}(u)\geq \frac{1}{2}\mu k^{\frac{2}{q}}.
\end{equation*}
\end{proof}

\section{{}Subcritical case}

First, we show the existence of a solution to the subcritical equation with
negative energy.

\begin{lemma}
\label{lem7} For each $t>0,$ small enough, $\inf\limits_{\left\Vert
u\right\Vert _{H_{2}}\leq t}F_{q}(u)<0,$ $\ \ \ q\in \left] 2,N\right] $.
\end{lemma}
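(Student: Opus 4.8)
The plan is to produce the negative value of $F_q$ not by a subtle concentration argument but by an explicit one‑parameter family of test functions sitting inside the ball $\{\,\|u\|_{H_2}\le t\,\}$, exploiting the two structural facts available here: $h$ is negative and $(M,g)$ has been normalized to volume $1$. The natural candidate is the family of constant functions $u_t\equiv t$.

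First I would check that $u_t$ actually lies in the ball: since $\Delta u_t=0$ and $\nabla u_t=0$, one has $\|u_t\|_{H_2}^2=\|u_t\|_2^2=t^2\,\mathrm{Vol}(M)=t^2$, so $\|u_t\|_{H_2}=t$ for every $t>0$. Next I would evaluate $F_q$ on $u_t$ straight from its definition; the biharmonic and gradient terms drop out and only the zeroth‑order terms survive, giving
\[
F_q(u_t)=t^2\int_M h\,dv_g-t^q\int_M f\,dv_g .
\]

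Then the conclusion is immediate from two elementary remarks: (i) $\int_M h\,dv_g<0$, because $h$ is (strictly) negative on the compact $M$; and (ii) $q>2$. Factoring, $F_q(u_t)=t^2\bigl(\int_M h\,dv_g-t^{q-2}\int_M f\,dv_g\bigr)$, and as $t\to0^+$ the bracket converges to $\int_M h\,dv_g<0$, hence stays negative for all $t$ below some $t_0>0$. For such $t$ we get $\inf_{\|u\|_{H_2}\le t}F_q(u)\le F_q(u_t)<0$, which is the claim. Note this argument is uniform in $q\in\left]2,N\right]$, so it also covers the endpoint $q=N$ that will be needed for Theorem \ref{th2}; and it uses none of conditions $(1)$–$(3)$ on $f$, not even $\sup_M f>0$ (if $\int_M f\,dv_g\le 0$ one in fact gets $F_q(u_t)<0$ for all $t>0$; only when $\int_M f\,dv_g>0$ does one need the quantitative smallness $t^{q-2}<|\int_M h\,dv_g|/\int_M f\,dv_g$).

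I do not expect a genuine obstacle here. The only points requiring a word of care are that the constant function indeed belongs to $H_2$ (it is smooth, so there is nothing to say) and that the normalization $\mathrm{Vol}(M)=1$ fixed at the start of the Preliminaries is exactly what makes $\|u_t\|_{H_2}=t$ on the nose; without it one would simply take $u_t=(t/\|1\|_{H_2})\cdot 1$ and the same computation goes through with a harmless multiplicative constant.
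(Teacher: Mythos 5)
Your proof is correct and is essentially the paper's own argument: both test $F_q$ on the constant function $t$ (using $\mathrm{Vol}(M)=1$ so that $\|t\|_{H_2}=t$), reduce to $F_q(t)=t^2\bigl(\int_M h\,dv_g-t^{q-2}\int_M f\,dv_g\bigr)$, and use the strict negativity of $h$ together with $q>2$ to conclude for small $t$. The paper merely bounds $\int_M h\,dv_g$ by $\max_M h<0$ instead of invoking $\int_M h\,dv_g<0$ directly; this is an immaterial difference.
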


In fact $F_{q}(t)\leq t^{2}\left( h-t^{q-2}\int_{M}fdv_{g}\right) ,$ where $%
h=\max_{M}h(x)$, and since $h<0$, there is $t_{o}>0$ small enough such that $%
\inf\limits_{\left\Vert u\right\Vert _{H_{2}}\leq t}F_{q}(u)<0$ for each $%
t\in \left] 0,t_{o}\right[ .$

\begin{proposition}
\label{prop3} Let $a$, $h$ be $C^{\infty }$ functions on $M$ , with $h$
negative.\ For every $C^{\infty }$ function, $f$ on $M$ with $%
\int_{M}f^{-}dv_{g}>0$, \ there exists a constant $C>0$ which depends only
on $\frac{f^{-}}{\int f^{-}dv_{g}}$ such that if $f$ \ satisfies the
following conditions 
\begin{equation*}
\left. 
\begin{array}{l}
(1)\text{ }\left\vert h(x)\right\vert <\lambda _{a,f}\text{ \ \ \ \ \ \ \
for any }x\in M\text{\ \ \ \ \ \ \ \ \ \ \ \ \ \ \ \ \ \ \ \ \ \ \ \ \ \ \ \
\ \ \ \ \ \ \ \ \ \ \ \ \ \ \ \ \ \ \ \ \ \ \ \ \ \ \ \ \ \ \ \ \ \ \ \ \ \
\ \ \ \ \ \ \ \ \ \ \ \ \ \ \ \ \ \ \ \ \ \ \ \ \ } \\ 
(2)\text{ }\frac{\sup f^{+}}{\int f^{-}dv_{g}}<C%
\end{array}%
\right.
\end{equation*}%
then the subcritical equation%
\begin{equation}
\Delta ^{2}u_{q}+\nabla ^{i}(a\nabla _{i}u_{q})+hu_{q}=f\left\vert
u_{q}\right\vert ^{q-2}u\text{ }_{q}\text{\ \ \ with }q\in \left] 2,N\right[
\label{14}
\end{equation}%
admits a $C^{4,\alpha }$, for some $\alpha \in (0,1)$, solution $u_{q}$\
with negative energy.
\end{proposition}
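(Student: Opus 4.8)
The plan is to produce $u_q$ as a minimizer of $F_q$ on a sphere $B_{k_0,q}=\{u:\|u\|_q^q=k_0\}$ for a suitably chosen $k_0>0$, and then to show that the Lagrange multiplier attached to this minimizer vanishes; after the harmless rescaling $f\mapsto\tfrac q2f$ the minimizer itself will then solve (\ref{14}). Fix $q\in\,]2,N[$ and impose conditions (1) and (2), taking for $C$ the constant supplied by Lemma \ref{lem6}. By Propositions \ref{prop1} and \ref{prop2} the function $k\mapsto\mu_{k,q}$ is continuous, its value is attained on each $B_{k,q}$, and $\mu_{0,q}=F_q(0)=0$. Testing $F_q$ on constant functions gives $\mu_{k,q}\le k^{2/q}\int_Mh\,dv_g-k\int_Mf\,dv_g$, which is negative for every small $k>0$ because $h<0$ and $2/q<1$; this is Lemma \ref{lem7} rephrased. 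In the opposite direction, Lemma \ref{lem6} (in either of its two cases) provides a number $k_{1,q}>0$ and a constant $\mu>0$ with $\mu_{k,q}\ge\tfrac12\mu\,k^{2/q}>0$ for all $k$ in an interval $I_q$ whose left endpoint is $k_{1,q}$; in particular $\mu_{k_{1,q},q}>0$.

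Consequently, on the compact interval $[0,k_{1,q}]$ the continuous function $k\mapsto\mu_{k,q}$ attains its minimum, which is negative (since $\mu_{k,q}<0$ for small $k$) while its two boundary values are $\mu_{0,q}=0$ and $\mu_{k_{1,q},q}>0$; so the minimum is attained at an interior point $k_0\in\,]0,k_{1,q}[$ with $\mu_{k_0,q}<0$. Since a whole neighbourhood of $k_0$ is contained in $[0,k_{1,q}]$, $k_0$ is a local minimum of $k\mapsto\mu_{k,q}$ on $R^{+}$. Pick $u_0\in B_{k_0,q}$ with $F_q(u_0)=\mu_{k_0,q}$; by Proposition \ref{prop1}, $u_0\in C^{4,\alpha}$ and $u_0$ is a weak solution of (\ref{11}), namely $\Delta^2u_0+\nabla^i(a\nabla_iu_0)+hu_0=(\lambda_{k_0,q}+\tfrac q2f)|u_0|^{q-2}u_0$ with Lagrange multiplier $\lambda_{k_0,q}$.

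The crux is to show $\lambda_{k_0,q}=0$. Write $P=\|\Delta u_0\|_2^2-\int_Ma|\nabla u_0|^2dv_g+\int_Mhu_0^2dv_g$ and $Q=\int_Mf|u_0|^qdv_g$, and consider $g(t)=F_q(t^{1/q}u_0)=t^{2/q}P-tQ$ for $t>0$. As $\|t^{1/q}u_0\|_q^q=tk_0$ we have $g(t)\ge\mu_{tk_0,q}$, and since $k_0$ is a local minimum of $\mu_{\cdot,q}$ this yields $g(t)\ge\mu_{tk_0,q}\ge\mu_{k_0,q}=g(1)$ for $t$ near $1$; hence $t=1$ is a local minimum of the differentiable function $g$, so $g'(1)=\tfrac2qP-Q=0$. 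On the other hand, testing the weak form of (\ref{11}) against $v=u_0$ gives $P-\tfrac q2Q=\lambda_{k_0,q}k_0$; substituting $Q=\tfrac2qP$ gives $\lambda_{k_0,q}k_0=0$, hence $\lambda_{k_0,q}=0$. Thus $u_0$ solves $\Delta^2u_0+\nabla^i(a\nabla_iu_0)+hu_0=\tfrac q2f|u_0|^{q-2}u_0$, and $u_q:=(q/2)^{1/(q-2)}u_0$ solves (\ref{14}); it is of class $C^{4,\alpha}$ by Proposition \ref{prop1}. Finally, $\mu_{k_0,q}=P-Q=(1-\tfrac2q)P<0$ forces $P<0$, so the energy of $u_q$ for (\ref{14}), namely $\tfrac12(\|\Delta u_q\|_2^2-\int_Ma|\nabla u_q|^2dv_g+\int_Mhu_q^2dv_g)-\tfrac1q\int_Mf|u_q|^qdv_g$ --- a positive multiple of $\mu_{k_0,q}$ --- is negative.

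The main obstacle is the vanishing of the Lagrange multiplier, and within it the genuinely delicate ingredient is Lemma \ref{lem6}, resting on the a priori estimates for $\lambda_{a,f,q,\eta}$ and on the smallness hypothesis (2): these are what prevent $k\mapsto\mu_{k,q}$ from merely decreasing away from $0$ and force it back above zero at $k_{1,q}$, which is what creates the interior local minimum $k_0$. Once that is secured, the remaining step --- combining $g'(1)=0$ with the Pohozaev-type identity from testing (\ref{11}) against $u_0$ --- is a short algebraic manipulation.
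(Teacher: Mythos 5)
Your proof is correct and follows essentially the same route as the paper: both rely on Lemma \ref{lem6} to force $\mu_{k,q}>0$ at $k_{1,q}$, use the continuity of $k\mapsto\mu_{k,q}$ and its negativity near $0$ to locate a negative interior minimum $k_0$, and take the minimizer of $F_q$ on the corresponding constraint set as the solution, with the same rescaling $u_q=(q/2)^{1/(q-2)}u_0$ and the same identification of the negative energy level. The only (cosmetic) difference is the final step: the paper minimizes over the ball $\left\{ \left\Vert u\right\Vert _{q}^{q}\leq k_{1,q}\right\}$ and notes the minimum is attained at an interior point, hence is a free critical point, whereas you minimize on the sphere $B_{k_{0},q}$ and then annihilate the Lagrange multiplier via $g'(1)=0$ together with testing (\ref{11}) against $u_0$ --- an equivalent, and in fact more explicit, justification of the same assertion.
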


\begin{proof}
For any $q\in \left] 2,N\right[ $ and $k>0$, let $\mu
_{k,q}=\inf_{\left\Vert w\right\Vert _{q}^{q}=k}F_{q}(w)$. First we remark
that if $k$ is close to $0$, $k>0$, $\mu _{k,q}<0\ $: indeed \ 
\begin{equation*}
\mu _{k,q}\leq F_{q}(k^{\frac{1}{q}})=k^{\frac{2}{q}}\left(
\int_{M}hdv_{g}-k^{1-\frac{2}{q}}\int_{M}fdv_{g}\right) <0\text{.}
\end{equation*}%
By Proposition \ref{prop2} the real valued function $k\rightarrow \mu _{k,q}$
is continuous and $\mu _{k,q}$ goes to $0$ , when $k\rightarrow 0$. So by
Lemma \ref{lem6} and \ref{lem7} the function $k\rightarrow \mu _{k,q}$
starts at $0$, takes a negative minimum, say at $k_{q}$, then takes positive
values. Let $l_{q}=k_{1,q}=\left[ 2\frac{\left\Vert h\right\Vert _{\infty
}+2\left\Vert a_{+}\right\Vert _{\infty }C(\sigma )}{\eta \int_{M}f^{-}dv_{g}%
}\right] ^{\frac{q}{q-2}}$ the lower bound of the interval $I_{q}$ given in
the proof of Lemma \ref{lem6}, then 
\begin{equation*}
\mu _{k_{q},q}=\inf_{\left\Vert u\right\Vert _{q}^{q}\leq l_{q}}F_{q}(u)%
\text{.}
\end{equation*}%
By Proposition \ref{prop1} the infimum $\mu _{kq,q}$ is attained by a
function $v_{q}\in H_{2}$ with $\left\Vert v_{q}\right\Vert _{q}^{q}=k_{q}$
, so%
\begin{equation*}
F_{q}(v_{q})=\inf_{\left\Vert u\right\Vert _{q}^{q}\leq l_{q}}F_{q}(u)\ 
\text{.}
\end{equation*}%
Now since for any $k_{q}\in I_{q}$, and any $u\in H_{2}$ with $\left\Vert
u\right\Vert _{q}^{q}=k_{q}$, $F_{q}(u)\geq 0$, it follows that $k_{q}<l_{q}$%
. So $v_{q}$ is a critical point of $F_{q}$ , that is for any $\varphi \in
H_{2}$ 
\begin{equation*}
\int_{M}\Delta v_{q}\Delta \varphi dv_{g}-\int_{M}a\nabla v_{q}\nabla
\varphi dv_{g}+
\end{equation*}%
\begin{equation*}
\int_{M}hv_{q}\varphi dv_{g}-\frac{q}{2}\int_{M}f\left\vert v_{q}\right\vert
^{q-2}v_{q}\varphi dv_{g}=0
\end{equation*}%
then $u_{q}=(\frac{q}{2})^{\frac{1}{q-2}}v_{q}$ is a weak solution of the
subcritical equation with negative energy such that 
\begin{equation*}
\left\Vert u_{q}\right\Vert _{q}^{q}\leq (\frac{q}{2})^{\frac{q}{q-2}}l_{q}%
\text{.}
\end{equation*}%
Moreover, arguing as in the proof of the Proposition \ref{prop1}, $u_{q}\in
C^{4,\alpha }(M)$ \ with $\alpha \in (0,1)$.
\end{proof}

Now we are going to seek a second solution to the subcritical equation with
positive energy.

We start by showing that $F_{q}$ with $q\in \left] 2,N\right[ $ satisfies
the Palais-Smale condition.

\begin{lemma}
\label{lem8} Let $c\ $be a real number, then each Palais-Smale sequence at
level $c$ for the functional $F_{q}$ satisfies the Palais -Smale condition.
\end{lemma}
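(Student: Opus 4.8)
The plan is to take a Palais--Smale sequence $(u_j)$ for $F_q$ at level $c$, i.e.\ $F_q(u_j)\to c$ and $F_q'(u_j)\to 0$ in the dual $H_2^{\ast}$, and to show it admits a strongly convergent subsequence in $H_2$. The first and main step is to establish that $(u_j)$ is bounded in $H_2$. For this I would combine the level information with the derivative information in the standard way: compute $F_q(u_j)-\tfrac{1}{q}F_q'(u_j)\cdot u_j$. Since $F_q$ has a quadratic part $Q(u)=\|\Delta u\|_2^2-\int_M a|\nabla u|^2 dv_g+\int_M h u^2 dv_g$ and a $q$-homogeneous nonlinear part $-\int_M f|u|^q dv_g$, this combination kills the nonlinear term and leaves $\bigl(1-\tfrac{2}{q}\bigr)Q(u_j)$, which is therefore $\le c+1+o(1)\|u_j\|_{H_2}$ for $j$ large. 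Then exactly as in the coercivity estimate of Proposition \ref{prop1} — using inequality (\ref{9}) to absorb the gradient term into $\eta\|\Delta u_j\|_2^2$ with $\eta$ small, and using $h$ negative so that $\int_M h u_j^2 dv_g\le 0$ — one gets $Q(u_j)\ge \alpha\|\Delta u_j\|_2^2 - C\|u_j\|_2^2$. The remaining difficulty is that $\|u_j\|_2^2$ is not yet controlled; but one can bound it by interpolating $L^2$ between $L^1$ and $L^q$ or, more simply, observe that from $F_q'(u_j)\cdot u_j=o(\|u_j\|_{H_2})$ one also has $Q(u_j)=\tfrac{q}{2}\int_M f|u_j|^q dv_g + o(\|u_j\|_{H_2})$, and feed this back; combining the two relations yields a bound of the form $\|u_j\|_{H_2}^2\le C(1+\|u_j\|_{H_2})$, hence $(u_j)$ is bounded in $H_2$. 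I expect this absorption argument — juggling the two PS relations to close the estimate in the presence of the sign-changing $f$ and the non-coercive quadratic form — to be the technical heart of the lemma.

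Once boundedness is in hand, pass to a subsequence with $u_j\rightharpoonup u$ weakly in $H_2$. By the compact embeddings recalled in Proposition \ref{prop1} (Rellich--Kondrachov for $H_2^2(M)$), we may assume $u_j\to u$ strongly in $L^s$ for every $s<N$ — in particular in $L^q$ since $q<N$ — and $\nabla u_j\to\nabla u$ strongly in $L^2$, and $u_j\to u$ a.e. The strong $L^q$ convergence, together with the bound on $\|u_j\|_q$ and the continuity of $t\mapsto|t|^{q-2}t$, gives $\int_M f|u_j|^{q-2}u_j\,\varphi\,dv_g\to\int_M f|u|^{q-2}u\,\varphi\,dv_g$ for all $\varphi\in H_2$ (dominated convergence, using $|u_j|^{q-1}$ bounded in $L^{q/(q-1)}$), so the limit $u$ is a weak solution: $F_q'(u)=0$.

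For the final step — upgrading weak to strong convergence — I would test the relation $F_q'(u_j)-F_q'(u)\to 0$ against $u_j-u$. Writing this out, the nonlinear contributions $\int_M f\bigl(|u_j|^{q-2}u_j-|u|^{q-2}u\bigr)(u_j-u)\,dv_g\to 0$ by the strong $L^q$ convergence just established (again via Hölder and the uniform $L^q$ bound), the term $\int_M a\,\nabla(u_j-u)\cdot\nabla(u_j-u)\,dv_g\to 0$ since $\nabla u_j\to\nabla u$ in $L^2$, and $\int_M h(u_j-u)^2 dv_g\to 0$ since $u_j\to u$ in $L^2$; what remains forces $\|\Delta(u_j-u)\|_2^2\to 0$. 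Combined with $\|\nabla(u_j-u)\|_2\to 0$ and $\|u_j-u\|_2\to 0$ this is precisely $u_j\to u$ in the norm $\|\cdot\|_{H_2}$, so the Palais--Smale condition holds at every level $c$.
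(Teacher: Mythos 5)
Your overall architecture (boundedness, then weak convergence and identification of the limit, then strong convergence by testing $F_q'(u_j)-F_q'(u)$ against $u_j-u$) is sound, and your last two steps are correct — indeed more detailed than what the paper writes, which disposes of them in one line via compactness of the embeddings for $q<N$. The gap is exactly at the point you yourself flag as the technical heart: passing from the two Palais--Smale identities to boundedness in $H_2$. Writing $Q(u)=\Vert \Delta u\Vert_2^2-\int_M a\vert\nabla u\vert^2dv_g+\int_M hu^2dv_g$, your combination correctly yields $Q(u_j)\leq C\left(1+\Vert u_j\Vert_{H_2}\right)$ and $\big\vert\int_M f\vert u_j\vert^q dv_g\big\vert\leq C\left(1+\Vert u_j\Vert_{H_2}\right)$, but neither relation controls $\Vert u_j\Vert_2$. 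Since $f$ changes sign, a bound on $\int_M f\vert u_j\vert^q dv_g$ gives no bound on $\Vert u_j\Vert_q$ (the contributions of $f^+$ and $f^-$ can cancel), and since $h<0$ the form $Q$ is not coercive — it is unbounded below on constants — so a bound on $Q(u_j)$ combined with (\ref{9}) only gives $\alpha\Vert\Delta u_j\Vert_2^2\leq C\Vert u_j\Vert_2^2+C\left(1+\Vert u_j\Vert_{H_2}\right)$, which does not close. Both of your proposed patches (interpolating $L^2$ between $L^1$ and $L^q$, or feeding $Q(u_j)=\frac{q}{2}\int_M f\vert u_j\vert^qdv_g+o\left(\Vert u_j\Vert_{H_2}\right)$ back in) presuppose an $L^q$ bound, which is precisely what is missing.

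The paper closes this gap using the a priori estimate of Lemma \ref{lem6}, hence using the structural hypotheses $\vert h\vert<\lambda_{a,f}$ and $\sup f^{+}/\int_M f^{-}dv_g<C$ — hypotheses your argument never invokes, which is itself a warning sign, since the statement is not expected to hold for arbitrary $a$, $h$, $f$. Concretely: by Lemma \ref{lem6} one fixes $k$ with $\inf_{\Vert u\Vert_q^q=k}F_q(u)\geq\frac{1}{2}\mu k^{\frac{2}{q}}>0$, and sets $v_j=k^{\frac{1}{q}}u_j/\Vert u_j\Vert_q$. The two Palais--Smale identities show that $Q(u_j)$ and $\int_M f\vert u_j\vert^qdv_g$ stay essentially bounded, so after rescaling both $Q(v_j)$ and $\int_M f\vert v_j\vert^qdv_g$ tend to $0$ if $\Vert u_j\Vert_q\rightarrow\infty$; then $F_q(v_j)\rightarrow 0$ while $\Vert v_j\Vert_q^q=k$, contradicting the positive lower bound on that $L^q$-sphere. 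Hence $\Vert u_j\Vert_q$, and therefore $\Vert u_j\Vert_2$, is bounded, and only then does your coercivity estimate via (\ref{9}) close to give boundedness in $H_2$. You need to insert this step (or an equivalent use of the hypotheses on $h$ and $f$) before the remainder of your argument, which then goes through as written.
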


\begin{proof}
First, we show that each Palais-Smale sequence is bounded: we argue by
contradiction. Suppose that there exists a sequence $\left( u_{j}\right) $
such that $F_{q}(u_{j})$ tends to a finite limit $c$, $F_{q}^{\prime
}(u_{j}) $ goes strongly to zero and $u_{j}$ to infinite in the $H_{2}$%
-norm. More explicitly we have 
\begin{equation*}
\int_{M}\left( (\Delta u_{j})^{2}-a\left\vert \nabla u_{j}\right\vert
^{2}+hu_{j}^{2}\right) dv_{g}-\int_{M}f\left\vert u\right\vert
_{j}^{q}dv_{g}\rightarrow c
\end{equation*}%
and 
\begin{equation*}
\int_{M}\left( \left( \Delta u_{j}\right) ^{2}-a\left\vert \nabla
u_{j}\right\vert ^{2}+hu_{j}^{2}\right) dv_{g}-\frac{q}{2}%
\int_{M}f\left\vert u\right\vert _{j}^{q-1}vdv_{g}\rightarrow 0
\end{equation*}%
so for any $\varepsilon >0$ there exists a positive integer $A$ such that
for every $j\geq A$ we have 
\begin{equation*}
\left\vert \int_{M}\left( (\Delta u_{j})^{2}-a\left\vert \nabla
u_{j}\right\vert ^{2}+hu_{j}^{2}\right) dv_{g}-\int_{M}f\left\vert
u\right\vert _{j}^{q}dv_{g}-c\right\vert \leq \varepsilon
\end{equation*}%
and%
\begin{equation*}
\left\vert \int_{M}\left( \left( \Delta u_{j}\right) ^{2}-a\left\vert \nabla
u_{j}\right\vert ^{2}+hu_{j}^{2}\right) dv_{g}dv_{g}-\frac{q}{2}%
\int_{M}f\left\vert u\right\vert _{j}^{q-1}vdv_{g}\right\vert \leq
\varepsilon \text{.}
\end{equation*}

Hence, we get 
\begin{equation}
\left\vert (q-2)\int_{M}(\Delta u_{j})^{2}-a\left\vert \nabla
u_{j}\right\vert ^{2}+hu_{j}^{2}dv_{g}-qc\right\vert \leq (q+2)\epsilon
\label{18}
\end{equation}%
and%
\begin{equation}
\left\vert (q-2)\int_{M}f\left\vert u_{j}\right\vert ^{q}-2c\right\vert \leq
4\varepsilon .  \label{19}
\end{equation}%
By Lemma \ref{lem6}, we can choose $k$ to be an $L^{q}-$ norm such that 
\begin{equation*}
\inf_{\left\Vert u\right\Vert _{q}^{q}=k}F_{q}(u)>0\text{.}
\end{equation*}%
Letting $v_{j}=k^{\frac{1}{q}}\frac{u_{j}}{\left\Vert u_{j}\right\Vert _{q}}%
, $ we obtain from (\ref{18}) and (\ref{19}) that 
\begin{equation}
\left\vert (q-2)\int_{M}f\left\vert v_{j}\right\vert ^{q}dv_{g}-\frac{2ck^{%
\frac{2}{q}}}{\left\Vert u_{j}\right\Vert _{q}^{2}}\right\vert \leq
4\varepsilon \frac{k^{\frac{2}{q}}}{\left\Vert u_{j}\right\Vert _{q}^{2}}
\label{20}
\end{equation}%
and

\begin{equation*}
\left\vert (q-2)\int_{M}(\Delta v_{j})^{2}-a\left\vert \nabla
v_{j}\right\vert ^{2}+hv_{j}^{2}dv_{g}-qc\frac{k^{\frac{2}{q}}}{\left\Vert
u_{j}\right\Vert _{q}^{2}}\right\vert
\end{equation*}%
\begin{equation}
\leq (q+2)\epsilon \frac{k^{\frac{2}{q}}}{\left\Vert u_{j}\right\Vert
_{q}^{2}}\text{.}  \label{21}
\end{equation}

Now since ($\left\Vert v_{j}\right\Vert _{q}$)$_{j}$ is a bounded sequence,
it follows by (\ref{21}) that $\left( v_{j}\right) $ is bounded in $H_{2}$.
If $\left\Vert u_{j}\right\Vert _{q}$ goes to infinity, it follows from (\ref%
{20}) and (\ref{21}) that $F_{q}(v_{j})$ goes to zero. And since $\left\Vert
v_{j}\right\Vert _{q}^{q}=k,$ we have 
\begin{equation*}
\inf_{\left\Vert u\right\Vert _{q}^{q}=k}F_{q}(u)\leq F_{q}(v_{j})
\end{equation*}%
so%
\begin{equation*}
\inf_{\left\Vert u\right\Vert _{q}^{q}=k}F_{q}(u)\leq 0\text{.}
\end{equation*}%
Hence a contradiction. Then the sequence $\left( u_{j}\right) $ is bounded
in $H_{2}$. Since $q<N$, the Sobolev injections are compact. Consequently
the Palais-Smale condition is satisfied.
\end{proof}

\begin{lemma}
\label{lem9} Let $u\in H_{2}$. If the $L_{q}$-norm $\left\Vert u\right\Vert
_{q}^{q}=k$ goes to $+\infty $, then \ $\mu _{k,q}=\inf_{\left\Vert
u\right\Vert _{q}^{q}=k}F_{q}(u)\rightarrow -\infty $ .
\end{lemma}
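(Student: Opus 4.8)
The plan is to produce, for every large value of $k$, an explicit element of the constraint set $B_{k,q}=\{u\in H_2:\ \|u\|_q^q=k\}$ on which $F_q$ takes a very negative value; since $\mu_{k,q}$ is the infimum of $F_q$ over $B_{k,q}$, this immediately gives $\mu_{k,q}\to-\infty$. We work in the case $f^+>0$, i.e.\ $\sup_M f>0$ (this is where hypothesis $(3)$ of Theorem~\ref{th1} enters). By continuity of $f$ there are a point $x_0\in M$ and a radius $r>0$ with $f\ge\delta>0$ on the geodesic ball $B(x_0,r)$. Fix, once and for all, a function $\varphi\in C^\infty(M)$, $\varphi\not\equiv 0$, supported in $B(x_0,r)$; then
\[
C_2:=\int_M f\,|\varphi|^q\,dv_g\ \ge\ \delta\,\|\varphi\|_q^q\ >\ 0 .
\]

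First I would test $F_q$ on the rays $t\varphi$, $t>0$. Since every term of $F_q$ is homogeneous in $u$, a direct computation gives
\[
F_q(t\varphi)=C_1\,t^2-C_2\,t^q,\qquad C_1:=\|\Delta\varphi\|_2^2-\int_M a|\nabla\varphi|^2\,dv_g+\int_M h\varphi^2\,dv_g ,
\]
where $C_1$ is a fixed real number and $C_2>0$ as above. Because $q>2$, the term $-C_2t^q$ dominates as $t\to+\infty$, so $F_q(t\varphi)\to-\infty$.

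Next, given $k>0$, I would choose the unique $t=t(k)=(k/\|\varphi\|_q^q)^{1/q}$ for which $\|t(k)\varphi\|_q^q=k$, so that $t(k)\varphi\in B_{k,q}$ and hence
\[
\mu_{k,q}\ \le\ F_q\big(t(k)\varphi\big)\ =\ C_1\Big(\frac{k}{\|\varphi\|_q^q}\Big)^{2/q}-C_2\,\frac{k}{\|\varphi\|_q^q}.
\]
As $k\to+\infty$ the linear-in-$k$ term dominates the $k^{2/q}$ term (again since $q>2$ and $C_2>0$), so the right-hand side tends to $-\infty$; therefore $\mu_{k,q}\to-\infty$, which is the claim.

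There is no real obstacle here beyond this bookkeeping; the only point that needs attention is that $\varphi$ be supported inside the region where $f$ is strictly positive, so that the leading coefficient $C_2$ is genuinely positive. This is exactly the role of the assumption $\sup_M f>0$, and it is also the reason the conclusion changes when $f^+=0$, where Lemma~\ref{lem6}(2) instead forces $F_q(u)\ge\frac12\mu k^{2/q}$ for all large $k$.
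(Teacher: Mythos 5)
Your proposal is correct and is essentially the paper's own argument: the paper likewise picks a test function supported in $\{x\in M: f(x)>0\}$ (normalized so that $\|u\|_q^q=1$), scales it to lie on the constraint set $B_{k,q}$, and observes that the $q$-homogeneous term $-k\int_M f|u|^q\,dv_g$ dominates the $k^{2/q}$-homogeneous quadratic part as $k\to+\infty$. You also correctly identify that the hypothesis $\sup_M f>0$ is exactly what makes the leading coefficient negative, which is the same point the paper relies on.
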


\begin{proof}
In fact since $\sup_{x\in M}f(x)>0$ let $u$ be a function of class $C^{2}$
with support contained in the open subset $\left\{ x\in M:f(x)>0\right\} $
of the manifold $M$ such that $\left\Vert u\right\Vert _{q}^{q}=1$, then $%
\int_{M}f\left\vert u\right\vert ^{q}dv_{g}>0$ and 
\begin{equation*}
F_{q}(ku)=k^{\frac{2}{q}}\left( \int_{M}\left( (\Delta u)^{2}-a\left\vert
\nabla u\right\vert ^{2}+hu^{2}\right) dv_{g}-k^{\frac{q-2}{q}%
}\int_{M}f\left\vert u\right\vert ^{q}dv_{g}\right) \text{.}
\end{equation*}%
So $\lim_{k\rightarrow +\infty }F_{q}(ku)=-\infty $ .
\end{proof}

\begin{proposition}
\label{prop4} Let $a$, $h$ be $C^{\infty }$ functions on $M$ with $h$
negative.\ For every $C^{\infty }$ function, $f$ on $M$ with $%
\int_{M}f^{-}>0 $, there exists a constant $C>0$ which depends only on $%
\frac{f^{-}}{\int f^{-}}$ such that if $f$ satisfies the following
conditions 
\begin{equation*}
\left. 
\begin{array}{l}
(1)\text{ }\left\vert h(x)\right\vert <\lambda _{a,f}\text{ \ \ \ \ \ \ \ \
\ \ \ \ \ \ \ \ \ for any }x\in M\text{\ \ \ \ \ \ \ \ \ \ \ \ \ \ \ \ \ \ \
\ \ \ \ \ \ \ \ \ \ \ \ \ \ \ \ \ \ \ \ \ \ \ \ \ \ \ \ \ \ \ \ \ \ \ \ \ \
\ \ \ \ \ \ \ \ \ \ \ \ \ \ \ \ } \\ 
(2)\text{ }\frac{\sup f^{+}}{\int f^{-}}<C \\ 
(3)\text{ }\sup f>0,%
\end{array}%
\right.
\end{equation*}%
then the subcritical equation 
\begin{equation*}
\Delta ^{2}u+\nabla ^{i}(a\nabla _{i}u)+hu=f\left\vert u\right\vert ^{q-2}u,%
\text{ \ \ \ \ }q\in \left] 2,N\right[
\end{equation*}%
admits a nontrivial solution of class $C^{4,\alpha }$, for some $\alpha \in
\left( 0,1\right) $, with positive energy.
\end{proposition}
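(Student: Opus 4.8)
The plan is to produce the second solution by a mountain--pass argument for the functional $F_q$ on $H_2$, feeding in the a priori estimates of Section~3; note that critical points of $F_q$ solve the equation after the harmless rescaling $f\mapsto\frac2q f$ (equivalently, one multiplies the critical point by $(q/2)^{1/(q-2)}$, exactly as in Proposition~\ref{prop1}), and the energy of the resulting solution has the same sign as the corresponding value of $F_q$.

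First I would fix the geometry. Since $\sup_Mf>0$, pick $u_0\in C^2(M)$ supported in $\{x\in M:f(x)>0\}$ with $\|u_0\|_q^q=1$, so that $\int_Mf|u_0|^q\,dv_g>0$; as in Lemma~\ref{lem9}, $F_q(tu_0)\to-\infty$ as $t\to+\infty$, so I can choose $t_1$ with $\|t_1u_0\|_q^q>k_{2,q}$ and $F_q(t_1u_0)<0$, and set $w=t_1u_0$. At the other end $F_q(0)=0$. Now fix $k\in I_q=[k_{1,q},k_{2,q}]$ and let $S_k=\{u\in H_2:\|u\|_q^q=k\}$. Taking the constant $C$ of the statement to be the constant $C_q$ produced in Lemma~\ref{lem6}(1) (which, thanks to Lemmas~\ref{lem3} and \ref{lem5}, stays bounded below as $q\to N$), hypotheses (1)--(3) give $\inf_{S_k}F_q\geq\frac12\mu k^{2/q}=:\rho>0$, so both ``valley'' values $F_q(0)$ and $F_q(w)$ lie strictly below the ``ridge'' value $\rho$.

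Next I would run the minimax. With $\Gamma=\{\gamma\in C([0,1],H_2):\gamma(0)=0,\ \gamma(1)=w\}$ and $c=\inf_{\gamma\in\Gamma}\max_{t\in[0,1]}F_q(\gamma(t))$, the segment $\gamma(t)=tw$ shows $c<\infty$; and since $u\mapsto\|u\|_q^q$ is continuous on $H_2$ (the embedding $H_2\hookrightarrow L^q$ being continuous) and passes from $0$ to $\|w\|_q^q>k$ along any $\gamma\in\Gamma$, the intermediate value theorem forces $\gamma$ to meet $S_k$, whence $c\geq\rho>0$. By Lemma~\ref{lem8}, $F_q\in C^1(H_2,\mathbb{R})$ satisfies the Palais--Smale condition at every level, so the Mountain Pass Theorem furnishes a critical point $\bar v_q\in H_2$ of $F_q$ with $F_q(\bar v_q)=c>0$.

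Finally, $\bar v_q\not\equiv0$ because $F_q(\bar v_q)=c>0=F_q(0)$; arguing as in Proposition~\ref{prop1}, $\bar v_q$ solves the Euler--Lagrange equation, and $u_q=(q/2)^{1/(q-2)}\bar v_q$ is then a nontrivial weak solution of the subcritical equation whose energy has the sign of $F_q(\bar v_q)$, hence is positive; the bootstrap/elliptic regularity argument of Proposition~\ref{prop1} upgrades it to $C^{4,\alpha}(M)$ for some $\alpha\in(0,1)$. Since the solution of Proposition~\ref{prop3} has negative energy, the two are distinct. I expect the only genuine obstacle to be the verification of the mountain--pass geometry: one must be sure that the barrier level $\rho$ coming out of Lemma~\ref{lem6} is strictly positive (this is precisely what the smallness of $\sup f^+/\int f^-\,dv_g$ secures, via Lemmas~\ref{lem3} and \ref{lem5}) and that every admissible path is forced across $S_k$; the Palais--Smale condition, which is normally the crux in such problems, is already available here because $q<N$ makes the Sobolev embeddings compact.
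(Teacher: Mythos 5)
Your proposal is correct and follows essentially the same route as the paper: a minimax over paths that are forced to cross the $L^{q}$-sphere barrier $\{\,\|u\|_{q}^{q}=k,\ k\in I_{q}\,\}$ supplied by Lemma \ref{lem6}, with compactness coming from the Palais--Smale condition of Lemma \ref{lem8}, followed by the rescaling and bootstrap of Proposition \ref{prop1}. The only (immaterial) difference is the choice of endpoints: the paper joins the two constrained minimizers $u_{l_{1},q}$ and $u_{l_{2},q}$ at levels where $\mu_{l,q}=0$ on either side of the maximum of $k\mapsto\mu_{k,q}$, whereas you join $0$ to a far point $w$ of negative energy with $\|w\|_{q}^{q}>k_{2,q}$, which avoids having to invoke Proposition \ref{prop1} at the endpoints.
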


\begin{proof}
By Lemma \ref{lem6}, \ref{lem7} and \ref{lem9} the curve $k\rightarrow \mu
_{k,q}$ starts at $0$, takes a negative minimum, then takes positive \
maximum and goes to minus infinite. Mimicking which is done in (\cite{11}),
let $l_{o}$ be an $L^{q}$-norm such that $\mu _{l_{o},q}$ is a maximum and $%
l_{1}$, $l_{2}$ two $L^{q}$-norms such that $\mu _{l_{1},q}=\mu _{l_{2},q}=0$
with $l_{1}<l_{o}$ and $l_{2}>l_{o}$.

Set 
\begin{equation*}
\Gamma =\left\{ \gamma \in C\left( \left[ 0,1\right] ,H_{2}\right) \text{: }%
\gamma (0)=u_{l_{1},q}\text{,}\gamma (1)=u_{l_{2},q}\right\} \text{,}
\end{equation*}%
where $u_{l_{i},q}\in B_{l_{i},q}$, $i=1,2$, are such that $\mu
_{l_{i},q}=F_{q}\left( u_{l_{i},q}\right) =\inf_{w\in
B_{l_{i},q}}F_{q}\left( w\right) $

and 
\begin{equation*}
\nu _{q}=\inf_{\gamma \in \Gamma }\max_{t\in \left[ 0,1\right] }F_{q}\left(
\gamma (t)\right) \text{ .}
\end{equation*}%
Arguing as in \cite{11}, we show that $\nu _{q}$ is a critical level of the
functional $F_{q}$ and $\nu _{q}\geq \mu _{l,q}>0$. Consequently the
subcritical equation (\ref{2}) admits a weak solution of positive energy.
This solution is in fact of class $C^{4,\alpha }$ with $\alpha \in (0,1)$.
\end{proof}

Theorem \ref{th1} follows from Proposition \ref{prop3} and \ref{prop4}.

\section{Critical case}

Now, we are going to investigate solutions of the critical equation.

\begin{theorem}
\label{th5} Let $a$, $h$ be $C^{\infty }$ functions on $M$ with $h$
negative.\ For every $C^{\infty }$ function, $f$ on $M$ with $%
\int_{M}f^{-}>0 $, \ there exists a constant $C>0$ which depends only on $%
\frac{f^{-}}{\int f^{-}}$ such that if $f$ satisfies the following
conditions 
\begin{equation*}
\left. 
\begin{array}{l}
(1)\text{ }\left\vert h(x)\right\vert <\lambda _{a},_{f}\text{ \ \ \ \ \ \ \
for any }x\in M\text{\ \ \ \ \ \ \ \ \ \ \ \ \ \ \ \ \ \ \ \ \ \ \ \ \ \ \ \
\ \ \ \ \ \ \ \ \ \ \ \ \ \ \ \ \ \ \ \ \ \ \ \ \ \ \ \ \ \ \ \ \ \ \ \ \ \
\ \ \ \ \ \ \ \ \ \ \ \ \ \ \ \ \ \ \ \ \ \ \ \ \ } \\ 
(2)\text{ }\frac{\sup f^{+}}{\int f^{-}}<C%
\end{array}%
\right.
\end{equation*}%
then the critical equation%
\begin{equation}
\Delta ^{2}u+\nabla ^{i}(a\nabla _{i}u)+hu=f\left\vert u\right\vert ^{N-2}u%
\text{ \ \ }  \label{e}
\end{equation}%
admits a $C^{4,\alpha }$, for some $\alpha \in (0,1)$, solution $u$\ with
negative energy.
\end{theorem}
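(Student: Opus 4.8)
The plan is to obtain the solution of the critical equation \eqref{e} as a limit of the subcritical solutions $u_q$ with negative energy produced by Proposition \ref{prop3}, letting $q \to N$. So the first step is to fix, once and for all, the constant $C$ and the parameters $\sigma, \varepsilon, \eta$ coming from Lemma \ref{lem6} (in the case $f^{+}>0$; the case $f^{+}=0$ is analogous and in fact easier since then $\mu_{k,q} \to -\infty$ is not even needed), chosen uniformly in $q$ for $q$ near $N$. For each such $q$, Proposition \ref{prop3} gives a weak solution $u_q$ with $F_q(u_q) = \mu_{k_q,q} < 0$ and $\left\Vert u_q \right\Vert_q^q \leq (q/2)^{q/(q-2)} l_q$, where $l_q = k_{1,q}$ is the lower endpoint of $I_q$. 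Since $l_q = \left[ 2(\left\Vert h \right\Vert_\infty + 2\left\Vert a_{+}\right\Vert_\infty C(\sigma))/(\eta \int_M f^{-} dv_g) \right]^{q/(q-2)}$ stays bounded as $q \to N$ (the exponent $q/(q-2)$ is bounded for $q$ bounded away from $2$), the $L^q$-norms $\left\Vert u_q \right\Vert_q$ are bounded uniformly in $q$.

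Next I would establish a uniform $H_2$ bound on the $u_q$. From the equation \eqref{14} tested against $u_q$ itself one gets $\left\Vert \Delta u_q \right\Vert_2^2 - \int_M a \left\vert \nabla u_q \right\vert^2 dv_g + \int_M h u_q^2 dv_g = \int_M f \left\vert u_q \right\vert^q dv_g$, and combining this with the negativity of $F_q(u_q)$ (which controls $\int_M f\left\vert u_q\right\vert^q$) together with the coercivity estimate \eqref{10} and formula \eqref{9}, one bounds $\left\Vert \Delta u_q \right\Vert_2^2$, hence $\left\Vert \nabla u_q \right\Vert_2^2$ and $\left\Vert u_q \right\Vert_2^2$, uniformly in $q$. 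Thus $(u_q)$ is bounded in $H_2$, and up to a subsequence $u_q \rightharpoonup u$ weakly in $H_2$, strongly in $H_1^2$ and in $L^s$ for every $s < N$, and almost everywhere. Passing to the limit in the weak formulation $\int_M \Delta u_q \Delta \varphi - \int_M a \nabla u_q \nabla \varphi + \int_M h u_q \varphi - \int_M f \left\vert u_q \right\vert^{q-2} u_q \varphi = 0$ for $\varphi \in H_2$: the first three terms pass by weak/strong convergence, and for the nonlinear term $\left\vert u_q\right\vert^{q-2}u_q$ one uses that $(u_q)$ is bounded in $L^N$ so $\left\vert u_q\right\vert^{q-2}u_q$ is bounded in $L^{N/(q-1)}$, converges a.e., and $q-1 \to N-1$, giving weak convergence to $\left\vert u\right\vert^{N-2}u$ in the appropriate space; hence $u$ is a weak solution of \eqref{e}. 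Regularity $u \in C^{4,\alpha}$ then follows by the bootstrap/elliptic regularity argument exactly as in the proof of Proposition \ref{prop1}.

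The main obstacle — and the heart of the theorem — is to show that the limit $u$ is \emph{nontrivial}, equivalently that it has negative energy. This is where the sign condition on $f$ and the smallness condition $(2)$ enter decisively, because a priori the bubbling phenomenon associated with the critical Sobolev exponent could force $u_q \rightharpoonup 0$. The idea is to get a \emph{lower} bound, uniform in $q$, on some quantity that does not vanish in the limit. Concretely, since $u_q$ is a minimizer over $B_{k_q,q}$ with $k_q < l_q$ and $F_q(u_q) < 0$, one has $\int_M f \left\vert u_q \right\vert^q dv_g > \left\Vert \Delta u_q \right\Vert_2^2 - \int_M a\left\vert\nabla u_q\right\vert^2 + \int_M h u_q^2$; splitting $f = f^{+} - f^{-}$ and using $F_q(u_q) < 0$ together with the Lemma \ref{lem6} estimate $F_q(u_q) \geq \tfrac12 \mu k_q^{2/q}$ valid on $I_q$ forces $k_q$ to stay \emph{away from $0$}: indeed if $k_q$ were small, $\mu_{k_q,q} < 0$ is fine, but the negative minimum $k_q$ cannot be below some $q$-uniform threshold without contradicting the structure of the curve $k \mapsto \mu_{k,q}$ near $0$ established via $\mu_{k,q} \leq k^{2/q}(\int_M h\, dv_g - k^{1-2/q}\int_M f\, dv_g)$ and the a priori estimates. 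One then argues that $\left\Vert u_q \right\Vert_q^q = k_q \geq \delta > 0$ uniformly, and that on the region $\{f \geq 0\}$ the mass cannot all concentrate (again by condition $(2)$, exactly as in Lemma \ref{lem1'} and the $\lambda_{a,f,\eta,q}$ machinery), so that $u \not\equiv 0$. Finally, lower semicontinuity of the energy functional under weak $H_2$ convergence combined with strong $L^s$ convergence ($s<N$) for the lower-order and $f^{-}$ terms, and Fatou on the $f^{+}$ term, yields $F_N(u) \leq \liminf_q F_q(u_q) \leq 0$, and a strict inequality argument (or the fact that $0$ has energy $0$ while $u_q$ has strictly negative energy bounded away from $0$) gives $F_N(u) < 0$, so $u$ is the desired nontrivial solution with negative energy.
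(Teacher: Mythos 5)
Your overall strategy --- take the negative--energy subcritical solutions $u_{q}$ of Proposition \ref{prop3}, derive a uniform $H_{2}$ bound from $F_{q}(u_{q})<0$ together with the boundedness of $l_{q}$ as $q\rightarrow N$, extract a weak limit, and pass to the limit in the weak formulation --- is exactly the paper's, and that part of your argument is sound. The gap is in the nontriviality step, which you rightly identify as the heart of the theorem. You claim that weak lower semicontinuity plus ``Fatou on the $f^{+}$ term'' yields $F_{N}(u)\leq \liminf_{q}F_{q}(u_{q})$. For the term $-\int_{M}f^{+}\vert u\vert ^{N}dv_{g}$ this would require $\limsup_{q}\int_{M}f^{+}\vert u_{q}\vert ^{q}dv_{g}\leq \int_{M}f^{+}\vert u\vert ^{N}dv_{g}$, whereas Fatou gives the \emph{opposite} inequality $\int_{M}f^{+}\vert u\vert ^{N}dv_{g}\leq \liminf_{q}\int_{M}f^{+}\vert u_{q}\vert ^{q}dv_{g}$; at the critical exponent a bubble concentrating at a point where $f>0$ makes your inequality false in general, so $F_{N}$ is not weakly lower semicontinuous along this sequence. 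Likewise the assertion that $k_{q}=\Vert u_{q}\Vert _{q}^{q}\geq \delta >0$ and that ``the mass cannot all concentrate on $\{f\geq 0\}$'' is not substantiated: a uniform lower bound on $\Vert u_{q}\Vert _{q}$ is perfectly compatible with $u_{q}\rightharpoonup 0$, because strong convergence in $L^{s}$ for each fixed $s<N$ does not control $\Vert u_{q}\Vert _{q}$ as $q\rightarrow N$. (Note also that for an actual weak solution $w$ of (\ref{e}), testing the equation with $w$ gives $F_{N}(w)=0$, so ``$F_{N}(u)<0$'' cannot be the right formulation of negative energy for the solution itself; the meaningful statement is $\int_{M}f\vert u\vert ^{N}dv_{g}<0$.)

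The paper closes this gap without invoking any semicontinuity of the critical term. Since $u_{q}$ is simultaneously a constrained minimizer and a solution of the subcritical Euler--Lagrange equation, testing with $u_{q}$ gives $Q(u_{q}):=\Vert \Delta u_{q}\Vert _{2}^{2}-\int_{M}a\vert \nabla u_{q}\vert ^{2}dv_{g}+\int_{M}hu_{q}^{2}dv_{g}=\frac{q}{2}\int_{M}f\vert u_{q}\vert ^{q}dv_{g}$, hence $\mu _{k_{q},q}=F_{q}(u_{q})=(\frac{q}{2}-1)\int_{M}f\vert u_{q}\vert ^{q}dv_{g}$. Testing the infimum against a fixed small constant function $k^{1/q}$ gives the $q$-uniform bound $\mu _{k_{q},q}\leq \frac{1}{2}k^{2/q}\int_{M}hdv_{g}<0$, so $\int_{M}f\vert u_{q}\vert ^{q}dv_{g}$ stays negative and bounded away from zero. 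Then only the weak lower semicontinuity of the quadratic part $Q$ (which is true) is needed: $\frac{N}{2}\int_{M}f\vert v\vert ^{N}dv_{g}=Q(v)\leq \liminf_{q}Q(u_{q})=\liminf_{q}\frac{q}{2}\int_{M}f\vert u_{q}\vert ^{q}dv_{g}<0$, whence $v\neq 0$ and the limit solution has negative energy. You need this identity-based argument, or an equivalent device, to complete your proof; as written, the decisive step fails.
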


\begin{proof}
For each $q\in \left( 2,N\right) $, let $u_{q}$ be the solution to the
subcritical equation (\ref{14}) given by Proposition \ref{prop3}, $u_{q}$ is
of negative energy. We have already shown in the proof of Proposition \ref%
{prop3} that 
\begin{equation*}
\left\Vert u_{q}\right\Vert _{q}^{q}=k_{q}\leq l_{q}=\left[ 2\frac{%
\left\Vert h\right\Vert _{\infty }+2\left\Vert a_{+}\right\Vert _{\infty
}C(\sigma )}{\eta \int_{M}f^{-}dv_{g}}\right] ^{\frac{q}{q-2}}
\end{equation*}%
and since $l_{q}$ goes to $l_{N}=\left[ 2\frac{\left\Vert h\right\Vert
_{\infty }+2\left\Vert a_{+}\right\Vert _{\infty }C(\sigma )}{\eta
\int_{M}f^{-}dv_{g}}\right] ^{\frac{4}{n}}$ as $q$ goes to $N$, $(u_{q})$ is
bounded in $L^{q}$, so it is in $L^{2}$ and since $u_{q}$ are of negative
energy then%
\begin{equation*}
\left\Vert \Delta u_{q}\right\Vert _{2}^{2}\leq \int_{M}a\left\vert \nabla
u\right\vert ^{2}dv_{g}-\int_{M}hu_{q}^{2}dv_{g}+\int_{M}f\left\vert
u_{q}\right\vert ^{q}dv_{g}
\end{equation*}%
\begin{equation*}
\leq \left\Vert a_{+}\right\Vert _{\infty }\left\Vert \nabla
u_{q}\right\Vert _{2}^{2}+\left\Vert h\right\Vert _{\infty }\left\Vert
u_{q}\right\Vert _{q}^{2}+\left\Vert f\right\Vert _{\infty }\left\Vert
u_{q}\right\Vert _{q}^{q}\text{.}
\end{equation*}

Now since for any sufficiently $\sigma >0$, there exists a constant $%
C(\sigma )$ such that%
\begin{equation*}
\left\Vert \nabla u_{q}\right\Vert _{2}^{2}\leq 2\sigma \left\Vert \Delta
u_{q}\right\Vert _{2}^{2}+2C(\sigma )\left\Vert u_{q}\right\Vert _{2}^{2}
\end{equation*}%
we get 
\begin{equation*}
\left( 1-2\sigma \left\Vert a_{+}\right\Vert _{\infty }\right) \left\Vert
\Delta u_{q}\right\Vert _{2}^{2}\leq \left( 2\left\Vert a_{+}\right\Vert
_{\infty }C(\sigma )+\left\Vert h\right\Vert _{\infty }\right) \left\Vert
u_{q}\right\Vert _{q}^{2}+\left\Vert f\right\Vert _{\infty }\left\Vert
u_{q}\right\Vert _{q}^{q}
\end{equation*}%
\begin{equation*}
\leq \left( 2\left\Vert a_{+}\right\Vert _{\infty }C(\sigma )+\left\Vert
h\right\Vert _{\infty }\right) l_{q}^{\frac{2}{q}}+\left\Vert f\right\Vert
_{\infty }l_{q}\text{.}
\end{equation*}%
So $\left( u_{q}\right) $ is a bounded sequence in $H_{2}.$ Consequently $%
u_{q}\rightarrow v$ weakly in $H_{2}$, up to a subsequence, we have

\begin{equation*}
u_{q}\rightarrow v\text{ \ strongly in }L^{s}(M)\text{ \ \ \ for }s<N
\end{equation*}%
\begin{equation*}
\nabla u_{q}\rightarrow \nabla v\text{ \ strongly in\ }L^{2}\ 
\end{equation*}%
\begin{equation*}
u_{q}(x)\rightarrow v(x)\ \ \text{ \ \ \ for \ a.e. }x\in M.
\end{equation*}%
On the other hand for any $q\in \left] 2,N\right[ $, $u_{q}$ satisfies, for
any $\varphi \in H_{2}$%
\begin{equation*}
\int_{M}\Delta u_{q}\Delta \varphi dv_{g}-\int_{M}a\nabla ^{i}u_{q}\nabla
_{i}\varphi dv_{g}+\int_{M}hu_{q}\varphi dv_{g}\text{ }
\end{equation*}%
\begin{equation}
=\frac{q}{2}\int_{M}f\left\vert u_{q}\right\vert ^{q-2}u_{q}\text{ }\varphi
dv_{g}  \label{15'}
\end{equation}%
and since the convergence of $(u_{q})$ is weak in $H_{2}$,\ it follows that
for any $\varphi \in H_{2}$%
\begin{equation*}
\int_{M}\Delta u_{q}\Delta \varphi dv_{g}-\int_{M}a\nabla ^{i}u_{q}\nabla
_{i}\varphi dv_{g}+\int_{M}hu_{q}\varphi dv_{g}
\end{equation*}%
\begin{equation}
\rightarrow \int_{M}\Delta v\Delta \varphi dv_{g}-\int_{M}a\nabla
^{i}v\nabla _{i}\varphi )dv_{g}+\int_{M}hv\varphi dv_{g}\text{ .}  \label{16}
\end{equation}%
Moreover since $u_{q}(x)$ $\rightarrow v(x)$ for a.e. $x\in M$ and $(u_{q})$
is bounded in $H_{2}$ we have%
\begin{equation*}
u_{q}(x)\left\vert u_{q}(x)\right\vert ^{q-2}\rightarrow v(x)\left\vert
v(x)\right\vert ^{N-2}\text{ \ \ \ for a.e. }x\in M
\end{equation*}%
and%
\begin{equation*}
\left\Vert u_{q}\left\vert u_{q}\right\vert ^{q-2}\right\Vert _{\frac{N}{N-1}%
}=\left\Vert u_{q}\right\Vert _{(q-1)\frac{N}{N-1}}^{q-1}\leq
C_{1}\left\Vert u_{q}\right\Vert _{N}^{N-1}\leq C\left\Vert u_{q}\right\Vert
_{H_{2}}^{N-1}\text{.}
\end{equation*}%
consequently $(u_{q})$ is bounded in $L^{\frac{N}{N-1}}$ and by a well known
theorem \cite{1} $u_{q}$ converges weakly to $v$ in $L^{\frac{N}{N-1}}$. Now
for any $\varphi \in H_{2}$ $\subset L^{N},$ and any smooth function $f,$ $%
f\varphi \in L^{N}$ ( the dual space of $L^{\frac{N}{N-1}}$), then 
\begin{equation}
\int_{M}f\left\vert u_{q}\right\vert ^{q-2}u_{q}\varphi dv_{g}\rightarrow
\int_{M}f\left\vert v\right\vert ^{N-2}v\varphi dv_{g}\text{.}  \label{17}
\end{equation}%
So by (\ref{16}) and (\ref{17}) $u=$ $\left( \frac{N}{2}\right) ^{\frac{1}{%
N-2}}v$ is a weak solution of the critical equation.

It remains to check that $u\neq 0$. We let 
\begin{equation*}
\mu _{k_{q},q}=\inf_{w\in \overset{\_}{B}_{k,q}}F_{q}(w)
\end{equation*}%
where 
\begin{equation*}
\overset{\_}{B}_{k,q}=\left\{ w\in H_{2}\left( M\right) :\left\Vert
w\right\Vert _{q}^{q}\leq l_{q}\right\} \text{.}
\end{equation*}%
By Proposition \ref{prop1}, $\mu _{k_{q},q}$ is attained by by a function $%
u_{q}\in H_{2}\left( M\right) $ with $\left\Vert u_{q}\right\Vert =k_{q}\leq
l_{q}$ that is $\mu _{k_{q},q}=F_{q}(u_{q})$.

\begin{claim}
$\mu _{k_{q},q}$ are uniformly lower bounded, as $q$ goes to $N$.
\end{claim}

Indeed, in one hand we have $\mu _{k_{q},q}<0$ and on the other hand if $%
\min_{x\in M}a(x)\leq 0$ we obtain%
\begin{equation*}
\mu _{k_{q},q}=F_{q}(u_{q})
\end{equation*}%
\begin{equation*}
=\left\Vert \Delta u_{q}\right\Vert _{2}^{2}-\int_{M}a\left\vert \nabla
u_{q}\right\vert ^{2}dv_{g}+\int_{M}hu_{q}^{2}dv_{g}-\int_{M}f\left\vert
u_{q}\right\vert ^{q}dv_{g}
\end{equation*}%
\begin{equation*}
\geq \min_{x\in M}h(x)k_{q}^{\frac{2}{q}}-\max_{x\in M}f^{+}(x)k_{q}\text{.}
\end{equation*}%
Letting 
\begin{equation*}
C_{q}=\max (l_{q},1)
\end{equation*}%
we get%
\begin{equation*}
\mu _{k_{q},q}\geq \left( \min_{x\in M}h(x)-\max_{x\in M}f^{+}(x)\right)
C_{q}
\end{equation*}%
so%
\begin{equation*}
\lim_{q\rightarrow N}\inf \mu _{k_{q},q}\geq \left( \min_{x\in
M}h(x)-\max_{x\in M}f^{+}(x)\right) C_{N}\text{.}
\end{equation*}%
In the case $\min_{x\in M}a(x)>0$, thanks to formula (\ref{9}), we obtain
for any sufficiently small $\sigma >0$ 
\begin{equation*}
\mu _{k_{q},q}\geq (1-\sigma \min_{x\in M}a(x))\left\Vert \Delta
u_{q}\right\Vert _{2}^{2}+\left( \min_{x\in M}h(x)+\min_{x\in M}a(x)C(\sigma
)-\max_{x\in M}f^{+}(x)\right) C_{q}
\end{equation*}%
and taking $\sigma $ small so that $(1-\sigma \min_{x\in M}a(x))\geq 0$, we
obtain%
\begin{equation*}
\mu _{k_{q},q}\geq \left( \min_{x\in M}h(x)+\min_{x\in M}a(x)C(\sigma
)-\max_{x\in M}f^{+}(x)\right) C_{q}
\end{equation*}%
and $\mu _{k_{q},q}$ are lower bounded as $q\rightarrow N$.

\begin{claim}
Up to a subsequence we have%
\begin{equation*}
\lim_{q\rightarrow N}\mu _{k_{q},q}=\mu _{k_{N},N}<0\text{ .}
\end{equation*}
\end{claim}

For $q$ close to $N$, we let 
\begin{equation*}
0<k<\min \left( l_{q},\left[ \frac{\left\vert \int_{M}hdv_{g}\right\vert }{%
2\int_{M}f^{-}dv_{g}}\right] ^{\frac{q}{q-2}}\right) \text{.}
\end{equation*}%
Since 
\begin{equation*}
\mu _{k_{q},q}=\inf_{u\in \overset{\_}{B}_{k,q}}F_{q}(u)
\end{equation*}%
with 
\begin{equation*}
\overset{\_}{B}_{k_{q},q}=\left\{ u\in H_{2}:\left\Vert u\right\Vert
_{q}^{q}\leq l_{q}\right\}
\end{equation*}%
we get%
\begin{equation*}
\mu _{k_{q},q}\leq F_{q}(k^{\frac{1}{q}})=k^{\frac{2}{q}}\left(
\int_{M}hdv_{g}+k^{1-\frac{2}{q}}\int_{M}f^{-}dv_{g}\right)
\end{equation*}%
\begin{equation*}
\leq \frac{1}{2}k^{\frac{2}{q}}\int_{M}hdv_{g}
\end{equation*}%
hence up to a subsequence 
\begin{equation}
\mu _{k_{N},N}=\lim_{q\rightarrow N}\mu _{k_{q},q}\leq \frac{1}{2}k^{\frac{2%
}{N}}\int_{M}hdv_{g}<0\text{.}  \label{17'}
\end{equation}

Now, we are in position to show that $u=$ $\left( \frac{N}{2}\right) ^{\frac{%
1}{N-2}}v$ $\neq 0$.$\ $

\begin{claim}
The weak solution of the critical equation $\left( \text{\ref{e}}\right) $
is non trivial.
\end{claim}

In fact since $u$ is a solution of the equation (\ref{e}) and the sequence $%
\left( u_{q}\right) $, of solutions to the subcritical equations, converges
weakly to $v$ in $H_{2}$, we have%
\begin{equation}
\frac{N}{2}\int_{M}f\left\vert v\right\vert ^{N}=\left( \left\Vert \Delta
v\right\Vert _{2}^{2}-\int_{M}a\left\vert \nabla v\right\vert
^{2}dv_{g}+\int_{M}hv^{2}dv_{g}\right)  \label{17''}
\end{equation}%
\begin{equation*}
\leq \lim \inf_{q\rightarrow N}\left( \left\Vert \Delta u_{q}\right\Vert
_{2}^{2}-\int_{M}a\left\vert \nabla u_{q}\right\vert
^{2}dv_{g}+\int_{M}hu_{q}^{2}dv_{g}\right)
\end{equation*}%
\begin{equation*}
=\lim \inf_{q\rightarrow N}\left( \frac{2}{q}\int_{M}f\left\vert
u_{q}\right\vert ^{q}dv_{g}\right) .
\end{equation*}%
The function $u_{q}$ solution of the subcritical equation achieves the
minimum $\mu _{k_{q},q}=\inf_{u\in \overset{\_}{B}_{k},q}F_{q}(u)$ , where $%
\overset{\_}{B}_{k_{q},q}=\left\{ u\in H_{2}:\left\Vert u\right\Vert
_{q}^{q}\leq l_{q}\right\} $.

So%
\begin{equation*}
\mu _{k_{q},q}=F_{q}(u_{q})=\left( \frac{q}{2}-1\right) \int_{M}f\left\vert
u_{q}\right\vert ^{q}dv_{g}
\end{equation*}%
and taking account of (\ref{17'}) and (\ref{17''}), we get 
\begin{equation*}
\int_{M}f\left\vert v\right\vert ^{N}dv_{g}<0
\end{equation*}%
hence 
\begin{equation*}
u=\left( \frac{N}{2}\right) ^{\frac{1}{N-2}}v\neq 0\text{.}
\end{equation*}%
By the bootstrap method and a method imagined by Vaugon see \cite{12}, we
get that $u$ is of class $C^{4,\alpha }$ for some $\alpha \in (0,1)$.
\end{proof}

\end{document}